\def\subsection{\@startsection{subsection}{2}%
  \z@{.5\linespacing\@plus.7\linespacing}{.1\linespacing}%
  {\normalfont\bfseries}}
\newcommand\mynobreakpar{\par\nobreak\@afterheading}
\newtheorem{de}{Definition}[section]
\newtheorem{lem}[de]{Lemma}
\newtheorem{prop}[de]{Proposition}
\newtheorem{cor}[de]{Corollary}
\newtheorem{thm}[de]{Theorem}
\theoremstyle{remark}
\newtheorem{rem}[de]{Remark}
\newtheorem{ex}[de]{Example}
\newcommand{\blank}{-}
\newcommand{\dfn}[1]{\textbf{#1}}
\newcommand{\ra}{\to}
\newcommand{\lra}{\longrightarrow}
\DeclareMathOperator*{\colim}{colim}
\DeclareMathOperator{\stab}{stab}
\DeclareMathOperator{\id}{id}
\DeclareMathOperator{\ev}{ev}
\newcommand{\Diff}{{\mathfrak{D}\mathrm{iff}}}
\newcommand{\Top}{{\mathfrak{T}\mathrm{op}}}
\newcommand{\Set}{{\mathfrak{S}\mathrm{et}}}
\newcommand{\sSet}{{\mathfrak{s}\mathrm{Set}}}
\newcommand{\Pre}{{\mathfrak{P}\mathrm{re}}}
\newcommand{\DS}{{\mathcal{DS}}}
\newcommand{\cF}{{\mathcal{F}}}
\newcommand{\cI}{{\mathcal{I}}}
\newcommand{\cM}{{\mathcal{M}}}
\newcommand{\cN}{{\mathcal{N}}}
\newcommand{\cP}{{\mathcal{P}}}
\newcommand{\A}{\mathbb{A}}
\newcommand{\N}{\mathbb{N}}
\newcommand{\Z}{\mathbb{Z}}
\newcommand{\R}{\mathbb{R}}
\newcommand{\oo}{\infty}
\newcommand{\eps}{\epsilon}
\ifpdf  \usepackage[pdftex,bookmarks=false]{hyperref}
\else   \usepackage[hypertex]{hyperref}
\title{The homotopy theory of diffeological spaces}
\author{J.~Daniel Christensen}
\address{Department of Mathematics, The University of Western Ontario,
London, ON N6A 5B7, Canada}
\email{jdc@uwo.ca}
\author{Enxin Wu}
\address{Faculty of Mathematics, University of Vienna,
Oskar-Morgenstern-Platz 1, 1090 Vienna, Austria}
\email{enxin.wu@univie.ac.at}
\date{May 12, 2015}
\begin{document}

\subjclass[2010]{57R19 (primary), 57P99, 58A05 (secondary).}

\keywords{diffeological space, homotopy group, smooth singular simplicial set,
weak equivalence, fibration, cofibration}

\begin{abstract}
Diffeological spaces are generalizations of smooth manifolds.
In this paper, we study the homotopy theory of diffeological spaces.
We begin by proving basic properties of the smooth homotopy groups
that we will need later.
Then we introduce the smooth singular simplicial set $S^D(X)$ associated
to a diffeological space $X$, and show that when $S^D(X)$ is fibrant, it
captures smooth homotopical properties of $X$.
Motivated by this, we define $X$ to be fibrant when $S^D(X)$ is, and
more generally define cofibrations, fibrations and weak equivalences in the
category of diffeological spaces using the smooth singular functor.
We conjecture that these form a model structure, but in this
paper we assume little prior knowledge of model categories, and
instead focus on concrete questions about smooth manifolds and
diffeological spaces.
We prove that our setup generalizes the naive smooth homotopy theory of smooth manifolds
by showing that a smooth manifold without boundary is fibrant
and that for fibrant diffeological spaces, the weak equivalences can be
detected using ordinary smooth homotopy groups.
We also show that our definition of fibrations
generalizes Iglesias-Zemmour's theory of diffeological bundles.
We prove enough of the model category axioms to show that every diffeological
space has a functorial cofibrant replacement.
We give many explicit examples of objects that are cofibrant, not cofibrant,
fibrant and not fibrant,
as well as many other examples showing the richness of the theory.
For example, we show that both the free and based loop spaces of a smooth manifold are fibrant.
One of the implicit points of this paper is that the language of model
categories is an effective way to organize homotopical thinking, even
when it is not known that all of the model category axioms are satisfied.
\end{abstract}

\maketitle

\tableofcontents

\section{Introduction}

Smooth manifolds play a central role in mathematics and its applications.
However, it has long been realized that more general spaces are needed,
such as singular spaces, loop spaces and other infinite-dimensional spaces,
poorly behaved quotient spaces, etc.
Various approaches to each of these classes of spaces are available,
but there are also frameworks that encompass all of these generalizations at once.
We will discuss diffeological spaces, which were introduced by Souriau in the 1980's~\cite{So1,So2},
and which provide a well-behaved category that contains smooth manifolds as a full subcategory.
We define diffeological spaces in Definition~\ref{de:diff-space}, and we recommend that
the reader unfamiliar with diffeological spaces turn there now to see how elementary the
definition is.

Diffeological spaces by now have a long history, of which we mention just a few examples.
Diffeological spaces were invented by Souriau in order to apply
diffeological groups to problems in mathematical physics.
Donato and Iglesias-Zemmour used diffeological spaces to study irrational tori~\cite{DI}.
Later, Iglesias-Zemmour established the theory of diffeological bundles~\cite{I1}
as a setting for their previous results.
One of the key general results is the existence of a long exact sequence of smooth homotopy groups
for a diffeological bundle.
In~\cite{Da}, Dazord used diffeological spaces to study Lie groupoids, Poisson manifolds and
symplectic diffeomorphisms.
In his thesis~\cite{Wa} (as well as in a preprint with Karshon), Watts used diffeological
spaces to study the complex of differentiable forms on symplectic quotients and orbifolds.
Orbifolds were also studied by Iglesias-Zemmour, Karshon and Zadka in~\cite{IKZ}.
Tangent spaces, tangent bundles, a geometric realization functor and a smooth singular simplicial set functor
were studied by Hector in~\cite{He}, and the latter was used to define
the smooth singular homology and cohomology of diffeological spaces as
well as a notion of Kan fibration.
Tangent spaces and tangent bundles were developed further in~\cite{CW}.
Costello and Gwilliam used diffeological vector spaces in their book~\cite[Appendix~A]{CG}
as a foundation for the homological algebra of the infinite-dimensional vector spaces
that arise in their work on factorization algebras in quantum field theory,
and the general homological algebra of diffeological vector spaces was studied
in~\cite{W2}.
Iglesias-Zemmour and Karshon studied Lie subgroups of the group $\Diff(M)$ of self-diffeomorphisms
of a smooth manifold $M$ in~\cite{IK}.
Finally, the recent book~\cite{I2} by Iglesias-Zemmour provides an in-depth treatment of
diffeological spaces.

Motivated by this past work, as well as the long history of using smooth stuctures to
study homotopy theory via differential topology,
we set up a framework for the study of homotopy theory in the category $\Diff$ of diffeological spaces.
This framework generalizes the smooth homotopy theory of smooth manifolds
and Iglesias-Zemmour's theory of diffeological bundles.
Although we don't know whether our definitions produce a model structure on $\Diff$,
we use the language of model categories to express our results.
We use only basic concepts from model category theory, such
as the model structure on the category $\sSet$ of simplicial sets.
Appropriate references are~\cite{GJ,Hi,Ho,Q},
the first of which is also recommended for background on simplicial sets.
Our set-up is as follows.
Let $\A^n=\{(x_0,\ldots,x_n) \in \R^{n+1} \mid \sum_{i=0}^nx_i=1\}$
be the ``non-compact $n$-simplex,'' equipped with the sub-diffeology from $\R^{n+1}$.
For a diffeological space $X$,
let $S^D_n(X)$ denote the set of smooth maps from $\A^n$ to $X$.
These naturally form a simplicial set $S^D(X)$, and one of our main
results is that when this simplicial set is fibrant,
it captures smooth homotopical information about $X$.
More precisely, in Theorem~\ref{fib-hg-compare}
we show that when $S^D(X)$ is fibrant, the simplicial homotopy groups of $S^D(X)$
agree with the smooth homotopy groups of $X$.
This raises the question of when $S^D(X)$ is fibrant, and we answer
this question in many cases.

To organize our study of this question, we make the following definitions.
We define a map in $\Diff$ to be a weak equivalence (fibration)
if the functor $S^D$ sends it to a weak equivalence (fibration)
in the standard model structure on $\sSet$.
Cofibrations in $\Diff$ are defined by the left lifting property
(Definition~\ref{LLP-RLP}). %
As a special case of these definitions,
a diffeological space $X$ is fibrant if its smooth singular simplicial
set $S^D(X)$ is fibrant (i.e., a Kan complex).  This is a concrete condition which
says that, for each $n$, every smooth map defined on $n$ faces of $\A^n$
and taking values in $X$
extends to all of $\A^n$.
We prove that many diffeological spaces are fibrant, and in particular
that every smooth manifold $M$ without boundary is fibrant.
This is a statement that can be made without the theory of
diffeological spaces, but our proof,
which makes use of the diffeomorphism group of $M$,
illustrates the usefulness of working
with more general spaces even when studying smooth manifolds.

Our definitions are also motivated by past work, in particular the work
on irrational tori and diffeological bundles.
Irrational tori are an important test case, since they are precisely
the sort of objects that are difficult to study using traditional methods.
Donato and Iglesias-Zemmour proved that the smooth fundamental group
of an irrational torus is non-trivial,
which contrasts with the fact that
the usual (continuous) fundamental group of an irrational torus is trivial.
We prove in this paper that every irrational torus is fibrant, and
thus is a homotopically well-behaved diffeological space
that can be studied using its smooth singular simplicial set.
We also show that every diffeological bundle with fibrant fiber is a fibration,
and so we can recover Iglesais-Zemmour's theory of diffeological bundles from our work.
Another of our results shows that both the free and based loop spaces of a smooth manifold are fibrant.

We conjecture that with our definitions, $\Diff$ is a model category,%
\footnote{A preprint of Haraguchi and Shimakawa~\cite{HS} claims
to construct a model structure on the category of diffeological spaces.
Unfortunately, we have found errors in all versions of that paper.
}
and that for every diffeological space, its smooth homotopy groups
coincide with the simplicial homotopy groups of its smooth singular simplicial set.
However, our results are of interest whether or not these conjectures are true,
as the smooth singular simplicial set is a basic object of study.

\medskip

Here is an outline of the paper.

In Section~\ref{se:basics}, we review the basics of diffeological spaces,
including the facts that
the category of diffeological spaces is complete, cocomplete and cartesian closed,
and contains the category of smooth manifolds as a full subcategory.
We also discuss diffeological groups in this section.

In Section~\ref{se:pi_n}, we review the $D$-topology and smooth homotopy groups of a diffeological space
together with the theory of diffeological bundles.
This section also contains new results.
The most important result gives
many equivalent characterizations of smooth homotopy groups
using $\R^n$, $I^n$, $D^n$, spheres and simplices,
with and without stationarity conditions (Theorem~\ref{equidef}).
This result is of general interest, and is also needed in the next section.
We also compare the smooth homotopy groups of some diffeological spaces
with the usual (continuous) homotopy groups of the underlying topological spaces
(Propositions~\ref{mfd} and~\ref{CX}, and Examples~\ref{earring} and~\ref{irrtorus}),
and show that the smooth approximation theorem does not hold for general diffeological spaces (Remark~\ref{rem:sat}).

In Section~\ref{se:model}, we define the smooth singular functor, study
the extent to which this functor preserves smooth homotopical information,
and explore the basic properties of fibrant and cofibrant diffeological spaces.
In more detail, in Subsection~\ref{ss:adjoint-pair},
we use the non-compact simplices $\A^n$ in $\Diff$ to define an adjoint pair between
the category of simplicial sets and the category of diffeological spaces (Definition~\ref{adjoint}).
The functors are called the diffeological realization functor and the smooth singular functor.
We use the smooth singular functor
to define weak equivalences, fibrations and cofibrations in $\Diff$ (Definition~\ref{wfc}).
We then show that the smooth singular functor sends smoothly homotopic maps
to simplicially homotopic maps (Lemma~\ref{shtvssimhtp}),
and use this result to show that
the smooth homotopy groups of a fibrant diffeological space
and the simplicial homotopy groups of its smooth singular simplicial set agree (Theorem~\ref{fib-hg-compare}).
We also point out that
the diffeological realization functor does not commute with finite products (Proposition~\ref{noncomm}),
and compare the three adjoint pairs among $\sSet$,
$\Diff$ and $\Top$ (Propositions~\ref{3adjoint1} and~\ref{3adjoint2}).

In Subsection~\ref{ss:cofibrant}, we study cofibrations and cofibrant diffeological spaces.
We begin by proving one of the model category factorization axioms, namely that every smooth map factors into a cofibration
followed by a trivial fibration (Proposition~\ref{factorization}),
and hence that every diffeological space has a functorial cofibrant replacement (Corollary~\ref{cofrep}).
We then show that fine diffeological vector spaces and
$S^1$ are cofibrant (Propositions~\ref{pr:fine} and~\ref{S1cof}).

In Subsection~\ref{ss:fibrant}, we focus on fibrations and fibrant diffeological spaces.
We first connect our definitions to earlier work by showing that
every diffeological bundle with fibrant fiber is a fibration (Proposition~\ref{diffbundfibration}),
and then use this to show
that not every diffeological space is cofibrant (Example~\ref{noncof}).
We next show that
every diffeological group is fibrant (Proposition~\ref{diffgrpfib}),
every homogeneous diffeological space is fibrant (Theorem~\ref{homogeneousfibrant}),
and hence that every smooth manifold is fibrant (Corollary~\ref{mfdfibrant}).
In addition, we show that
every $D$-open subset of a fibrant diffeological space with the sub-diffeology is fibrant (Theorem~\ref{Dopnfibrant})
and that the function space from a (pointed) compact diffeological space to a (pointed) smooth manifold is fibrant 
(Corollary~\ref{cpt}).
This gives a second proof that every smooth manifold is fibrant,
and also shows that the free and based loop spaces of a smooth manifold are fibrant.
Finally, we show that
not every diffeological space is fibrant (Examples~\ref{nonfib1}, \ref{nonfib3}, \ref{nonfib2}
and~\ref{afewhalflines}),
and, in particular, that no smooth manifold with (non-empty) boundary is fibrant (Corollary~\ref{mfdwb-nonfib}).
\medskip

Unless otherwise specified, all smooth manifolds in this paper are
assumed to be finite-dimensional, Hausdorff, second countable and without boundary.
\medskip

We would like to thank Dan Dugger for the idea for the proof of Proposition~\ref{3adjoint1},
Gord Sinnamon for the proof of Example~\ref{halfline},
Gaohong Wang for the idea for the proof of Example~\ref{nonfib3},
and the referee for many comments that led to improvements in the exposition.

\section{Background on diffeological spaces and groups}\label{se:basics}

Here is some background on diffeological spaces and diffeological groups.
While we often cite early sources, almost all of the material
in this section is in the book~\cite{I2}, which we recommend as a good reference.
For a three-page introduction to diffeological spaces, we recommend~\cite[Section~2]{CSW},
which we present in a condensed form here.

\begin{de}[\cite{So2}]\label{de:diff-space}
A \dfn{diffeological space} is a set $X$
together with a specified set $\mathcal{D}_X$ of functions $U \ra X$ (called \dfn{plots})
for each open set $U$ in $\R^n$ and for each $n \in \N$,
such that for all open subsets $U \subseteq \R^n$ and $V \subseteq \R^m$:
\begin{enumerate}
\item (Covering) Every constant map $U \ra X$ is a plot;
\item (Smooth Compatibility) If $U \ra X$ is a plot and $V \ra U$ is smooth,
then the composition $V \ra U \ra X$ is also a plot;
\item\label{de:diff-space3} (Sheaf Condition) If $U=\cup_i U_i$ is an open cover
and $U \ra X$ is a set map such that each restriction $U_i \ra X$ is a plot,
then $U \ra X$ is a plot.
\end{enumerate}
We usually use the underlying set $X$ to denote the diffeological space $(X,\mathcal{D}_X)$.
\end{de}

\begin{de}[\cite{So2}]
Let $X$ and $Y$ be two diffeological spaces,
and let $f:X \rightarrow Y$ be a set map.
We say that $f$ is \dfn{smooth} if for every plot $p:U \ra X$ of $X$,
the composition $f \circ p$ is a plot of $Y$.
\end{de}

The collection of all diffeological spaces and smooth maps forms a category,
which we denote $\Diff$.
Given two diffeological spaces $X$ and $Y$,
we write $C^\infty(X,Y)$ for the set of all smooth maps from $X$ to $Y$.
An isomorphism in $\Diff$ will be called a \dfn{diffeomorphism}.

Every smooth manifold $M$ is canonically a diffeological space
with the same underlying set and plots taken to be all smooth maps $U \ra M$ in the usual sense.
We call this the \dfn{standard diffeology} on $M$, and, unless we say
otherwise, we always equip a smooth manifold with this diffeology.
By using charts, it is easy to see that smooth maps in the usual sense between smooth manifolds
coincide with smooth maps between them with the standard diffeology.

The smallest diffeology on $X$ containing a set of
maps $A=\{U_i \ra X\}_{i \in I}$ is called the diffeology \dfn{generated} by $A$.
It consists of all maps $U \ra X$ that locally either factor through the given
maps via smooth maps, or are constant.

For a diffeological space $X$ with an equivalence relation~$\sim$,
the smallest diffeology on $X/{\sim}$ making the quotient map $\{ X \twoheadrightarrow X/{\sim} \}$ smooth
is called the \dfn{quotient diffeology}.
It consists of all maps $U \ra X/{\sim}$ that locally factor through the quotient map.
For a diffeological space $X$ and a subset $A$ of $X$,
the largest diffeology on $A$ making the inclusion map $\{A \hookrightarrow X\}$ smooth
is called the \dfn{sub-diffeology}.
It consists of all maps $U \ra A$ such that $U \ra A \hookrightarrow X$ is a plot of $X$.

\begin{thm}
The category $\Diff$ is both complete and cocomplete.
\end{thm}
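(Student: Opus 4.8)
The plan is to construct all small products and equalizers, which gives completeness, and all small coproducts and coequalizers, which gives cocompleteness; in each case the underlying set is the corresponding (co)limit computed in $\Set$, equipped with an evident diffeology. Equivalently, one may observe that the forgetful functor $\Diff \to \Set$ is topological over $\Set$ --- every set carries an initial diffeology for any family of maps into diffeological spaces and a final diffeology for any family of maps out of them --- and then invoke the general fact that a topological functor creates limits and colimits; but the direct approach is short, and much of it (the sub-, quotient-, and generated diffeologies) has been set up above.

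For completeness, equip $\prod_i X_i$ with the \emph{product diffeology}, in which $p \colon U \to \prod_i X_i$ is a plot exactly when each component $U \to X_i$ is a plot. The three conditions of Definition~\ref{de:diff-space} are inherited component-wise from the $X_i$, and the universal property of the product is then formal. For a parallel pair $f, g \colon X \to Y$, the subset $E = \{ x \in X : f(x) = g(x) \}$ with the sub-diffeology is the equalizer: the sub-diffeology is already known to be a diffeology, and any smooth map into $X$ that coequalizes $f$ and $g$ factors uniquely through $E$, with smoothness of the factorization immediate from the description of the sub-diffeology. Products and equalizers together produce all small limits, so $\Diff$ is complete.

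For cocompleteness, equip $\coprod_i X_i$ with the diffeology in which $p \colon U \to \coprod_i X_i$ is a plot iff $U$ admits an open cover on each member of which $p$ factors through one of the canonical inclusions $X_i \hookrightarrow \coprod_i X_i$ via a plot of the corresponding $X_i$; this is a diffeology (it is the diffeology generated by the plots of the summands). For a parallel pair $f, g \colon X \to Y$, take $C = Y/{\sim}$, the coequalizer in $\Set$, with the quotient diffeology, which is already known to be a diffeology. In both cases the universal property reduces to the same point: a cocone out of the $X_i$ (respectively, a smooth map out of $Y$ coequalizing $f$ and $g$) induces a map on underlying sets, and to see that this induced map is smooth one pulls back an arbitrary plot $p$, restricts to an open cover on which $p$ factors through a piece, composes there, and then glues the resulting plots using the sheaf condition on the target. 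Coproducts and coequalizers together produce all small colimits, so $\Diff$ is cocomplete.

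The content of the argument is essentially bookkeeping. The one place where anything beyond a formal manipulation is needed is that last step --- using the sheaf axiom of Definition~\ref{de:diff-space} on the target diffeological space to assemble locally-defined smooth maps into a single smooth map --- and this is also the only subtlety one must watch when checking that the generated and quotient diffeologies genuinely realize colimits in $\Diff$ rather than something coarser.
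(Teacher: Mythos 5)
Your proof is correct and is essentially the standard direct construction (limits via product and sub-diffeologies, colimits via coproduct and quotient diffeologies, with the sheaf axiom gluing the locally-defined factorizations) that the paper does not reproduce but instead defers to its references \cite{BH} and \cite{CSW}; the paper itself only spells out the special case of binary products, exactly as in your argument. Nothing is missing.
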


This is proved in~\cite{BH}, but can be found implicitly in earlier work.
A concise sketch of a proof is provided in~\cite{CSW}.
As a special case, the set-theoretic cartesian product $X \times Y$ of
two diffeological spaces has a diffeology consisting of those functions
$U \to X \times Y$ such that the component functions $U \ra X$ and $U \ra Y$
are plots, and this diffeology makes $X \times Y$ into the product of
$X$ and $Y$ in $\Diff$.

The category of diffeological spaces also enjoys another convenient property.
Given two diffeological spaces $X$ and $Y$,
the set of functions $\{U \ra C^\infty(X,Y) \mid U \times X \ra Y \text{ is smooth} \}$
forms a diffeology on $C^\infty(X,Y)$.
We always equip hom-sets with this diffeology.
One can show~\cite{I1, BH, I2} that
for each diffeological space $Y$,
$\blank \times Y:\Diff \rightleftharpoons \Diff :C^\infty(Y,\blank)$ is an adjoint pair,
which gives the following result:

\begin{thm}\label{th:cartesian-closed}
The category $\Diff$ is cartesian closed.
\end{thm}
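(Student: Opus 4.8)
The plan is as follows. By definition, $\Diff$ is cartesian closed when it has all finite products and, for each object $Y$, the functor $\blank \times Y : \Diff \to \Diff$ admits a right adjoint. Finite products exist because $\Diff$ is complete: the terminal object is the one-point diffeological space, and the binary product is the set-theoretic product $X \times Y$ with the diffeology described after the completeness theorem above. Hence the only real task is to check that, for a fixed diffeological space $Y$, the functor $C^\infty(Y,\blank)$, with each hom-set carrying the function-space diffeology introduced just before the statement, is right adjoint to $\blank \times Y$. Concretely, I will produce a bijection
\[
 C^\infty(Z \times Y, W) \;\cong\; C^\infty\bigl(Z, C^\infty(Y, W)\bigr)
\]
natural in $Z$ and in $W$.

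The bijection is the usual exponential transpose: a smooth map $f : Z \times Y \to W$ is sent to $\widehat{f} : Z \to C^\infty(Y,W)$ with $\widehat{f}(z)(y) = f(z,y)$, and a map $g : Z \to C^\infty(Y,W)$ is sent to $\check{g} : Z \times Y \to W$ with $\check{g}(z,y) = g(z)(y)$. On underlying sets these assignments are visibly mutually inverse, so the whole content is that each one preserves smoothness. First, for each $z \in Z$ the map $\widehat{f}(z) : Y \to W$ is smooth: if $p : U \to Y$ is a plot, then $\widehat{f}(z) \circ p = f \circ (c_z, p)$, where $c_z : U \to Z$ is the constant plot at $z$, and this is a plot of $W$ because $(c_z, p)$ is a plot of $Z \times Y$ and $f$ is smooth. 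Thus $\widehat{f}$ is a well-defined function $Z \to C^\infty(Y,W)$.

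Next I would verify that $\widehat{f}$ is itself smooth for the function-space diffeology. By the definition of that diffeology, this means checking that for every plot $q : U \to Z$ the map $U \times Y \to W$, $(u,y) \mapsto \widehat{f}(q(u))(y) = f(q(u),y)$, is smooth; but this map equals $f \circ (q \times \id_Y)$, and $q \times \id_Y : U \times Y \to Z \times Y$ is smooth because precomposing it with any plot of $U \times Y$ yields a plot of $Z \times Y$. Conversely, given a smooth $g : Z \to C^\infty(Y,W)$, to see that $\check{g}$ is smooth take any plot $(q,p) : U \to Z \times Y$; since $g$ is smooth, $g \circ q : U \to C^\infty(Y,W)$ is a plot, so by the function-space diffeology $(u,y) \mapsto g(q(u))(y)$ is a smooth map $U \times Y \to W$, and precomposing it with the plot $(\id_U, p) : U \to U \times Y$ gives the plot $u \mapsto g(q(u))(p(u)) = \check{g}(q(u),p(u))$ of $W$. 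Hence $\check{g}$ is smooth, and naturality of the bijection in $Z$ and $W$ (and functoriality of the two functors) is an immediate diagram chase.

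The only point demanding care is the bookkeeping around which maps into a product diffeology count as plots, together with the repeated implicit use of the smooth-compatibility axiom of Definition~\ref{de:diff-space}; the fact that the stated collection of maps into $C^\infty(Y,W)$ genuinely forms a diffeology is asserted in the discussion preceding the theorem, so I would simply take it as given. I do not expect a serious obstacle here — indeed this adjunction is precisely the content attributed to~\cite{I1, BH, I2} just above the statement — so the proof is a matter of unwinding the definition of the function-space diffeology rather than overcoming a difficulty.
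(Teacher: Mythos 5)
Your proposal is correct and is essentially the standard exponential-transpose argument that the paper itself defers to the references \cite{I1,BH,I2}: you verify that the functional diffeology on $C^\infty(Y,W)$ makes $\blank\times Y \dashv C^\infty(Y,\blank)$ an adjunction, which is exactly the content the paper invokes. The smoothness checks in both directions are carried out correctly, so this just supplies the details the paper omits.
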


\begin{rem}\label{rem:concrete}
We now describe an alternate point of view that is not needed for any
of the results of the paper, but which explains the good categorical
properties of $\Diff$.
Write $\DS$ for the category with objects the open subsets of
$\R^n$ for all $n \in \N$ and morphisms the smooth maps between them.
We associate to each diffeological space $X$ a presheaf $\cF_X$ on $\DS$
which sends an open subset $U$ of some $\R^n$ to the set $\cF_X(U)$ of all plots
from $U$ to $X$.
This is a sheaf with respect to the notion of cover described in
Definition~\ref{de:diff-space}(\ref{de:diff-space3}).
Any (pre)sheaf $\cP$ on $\DS$ has a natural map
\[
  \cP(U) \lra \Set(U, \cP(\R^0))
\]
sending an element $s$ to the function $U \to \cP(\R^0)$ which
sends $u$ to $i_u^*(s)$, where $i_u$ is the map $\R^0 \to U$ sending the one
point space $\R^0$ to $u$.
When this map is injective for every $U$, $\cP$ is said to be a \dfn{concreate (pre)sheaf}.
It is easy to show that every $\cF_X$ is concrete, and moreover
that the category $\Diff$ is equivalent to the category of concrete sheaves on $\DS$,
a full subcategory of the category of sheaves
(see~\cite[Proposition~4.15]{BH} and~\cite{Du}).
The category of concrete sheaves on any concrete site is a
``Grothendieck quasi-topos,'' and is always complete, cocomplete,
locally cartesian closed and locally presentable~\cite[Theorem~C2.2.13]{J}.
\end{rem}

Finally, we discuss diffeological groups, which will be useful in Subsections~\ref{ss:bundles}
and~\ref{ss:fibrant}.

\begin{de}[\cite{So1}]
A \dfn{diffeological group} is a group object in $\Diff$.
That is, a diffeological group is both a diffeological space and a group
such that the group operations are smooth maps.
\end{de}

\begin{ex}\
\begin{enumerate}
\item Any Lie group with its standard diffeology is a diffeological group.

\item The continuous diffeological space $C(G)$ of a topological group $G$ is a diffeological group,
where the functor $C$ is defined just before Proposition~\ref{Difftopadj}.
\end{enumerate}
\end{ex}

\begin{ex}
Let $G$ be a diffeological group,
and let $H$ be any subgroup of $G$.
Then $H$ with the sub-diffeology is automatically a diffeological group.
If $H$ is a normal subgroup of $G$,
then the quotient group $G/H$ with the quotient diffeology is also a diffeological group.
\end{ex}

\begin{ex}[\cite{So1}]
Let $X$ be a diffeological space.
Write $\Diff(X)$ for the set of all diffeomorphisms from $X$ to itself.
Then $\{ p: U \ra \Diff(X) \mid \text{the two maps } U \times X \ra X \text{ defined by }
(u,x) \mapsto p(u)(x) \text{ and } (u,x) \mapsto (p(u))^{-1}(x) \text{ are smooth} \}$
is a diffeology on $\Diff(X)$,
which makes $\Diff(X)$ into a diffeological group.
When $X$ is a smooth manifold with the standard diffeology,
the above diffeology on $\Diff(X)$ is in fact the sub-diffeology from $C^\oo(X, X)$.
\end{ex}

\section{Smooth homotopy groups and diffeological bundles}\label{se:pi_n}

In this section, we begin by setting up the basics of the smooth homotopy theory of diffeological spaces
and giving several equivalent characterizations of smooth homotopy groups.
We then review the $D$-topology and diffeological bundles from~\cite{I1}.
We show that the smooth approximation theorem does not hold for general diffeological spaces (see Remark~\ref{rem:sat}),
and that the smooth homotopy groups of a diffeological space
and the usual (continuous) homotopy groups of the underlying topological space
do not match in general (see Example~\ref{irrtorus}).

\medskip

Throughout the paper, we will make use of the following definition.

\begin{de}
For $0 < \eps < 1/2$, an \dfn{$\eps$-cut-off function} is a smooth function
$\phi : \R \to \R$ such that $0 \leq \phi(t) \leq 1$ for $t \in \R$,
$\phi(t) = 0$ if $t < \eps$ and $\phi(t) = 1$ if $t > 1 - \eps$.
A \dfn{cut-off function} is an $\eps$-cut-off function for some $0 < \eps < 1/2$.
It is well-known that such functions exist for all such $\eps$.
\end{de}

\subsection{Smooth homotopy groups}

We begin with the elementary smooth homotopy theory of diffeological spaces,
leading up to Iglesias-Zemmour's recursive definition
of the smooth homotopy groups of a diffeological space $X$; see~\cite{I1}.
The main result of this subsection is Theorem~\ref{equidef}, which shows
that many definitions of smooth homotopy groups agree.

A \dfn{path} in $X$ is a smooth map $f : \R \to X$.
We say that $f$ is \dfn{stationary} if there is an $\eps > 0$ such that
$f$ is constant on $(-\oo, \eps)$ and also on $(1-\eps, \oo)$.

We define a relation on $X$ by $x \simeq y$
if and only if there is a smooth path $f$ connecting $x$ and $y$,
that is, with $f(0) = x$ and $f(1) = y$.
When this is the case, the path can always be chosen to be stationary,
because of the existence of cut-off functions.
It follows that $\simeq$ is an equivalence relation,
and that $x \simeq y$ if and only if there is a smooth function $f : I \to X$
with $f(0) =x$ and $f(1) = y$, where $I = [0, 1] \subset \R$ has the sub-diffeology.
The equivalence classes are called the \dfn{smooth path components}, and
the \dfn{$0^{th}$ smooth homotopy group $\pi_0^D(X)$} is defined to be the quotient set $X/{\simeq}$.
As usual, for $x \in X$, $\pi_0^D(X,x)$ denotes the set $\pi_0^D(X)$ pointed by the path component of $x$.

Let $X$ and $Y$ be diffeological spaces.
We say that smooth maps $f, g: X \ra Y$ are \dfn{smoothly homotopic}
if $f \simeq g$ as elements of $C^{\oo}(X,Y)$.
By cartesian closedness of $\Diff$,
$f \simeq g$ if and only if there exists a smooth map $F : X \times \R \to Y$ (or $F: X \times I \to Y$)
such that $F(x,0)=f(x)$ and $F(x,1)=g(x)$
for each $x$ in $X$.
It is easy to see that
smooth homotopy is an equivalence relation compatible with both left and right composition.
We write $[X,Y]$ for $\pi_0^D(C^\infty(X,Y))$, the set of smooth homotopy classes.

A \dfn{pair} is a diffeological space $X$ with a chosen diffeological subspace $U$.
A map $f : (X, U) \to (Y, V)$ of pairs is a smooth map $f : X \to Y$
such that $f(U) \subseteq V$, and the function space $C^{\oo}((X,U), (Y,V))$
is the set of such maps with the sub-diffeology from $C^{\oo}(X, Y)$.
Two such maps are \dfn{smoothly homotopic} if they are in the same path
component of $C^{\oo}((X,U), (Y,V))$,
and we write $[(X,U),(Y,V)]$ for $\pi_0^D(C^\infty((X,U),(Y,V)))$.
Two pairs $(X,U)$ and $(Y,V)$ are \dfn{smoothly homotopy equivalent}
if there are maps $f : (X,U) \to (Y,V)$ and $g: (Y,V) \to (X,U)$ such
that $fg$ and $gf$ are smoothly homotopic to the identity maps.
When $U$ consists of a single point $x$, we write $(X,x)$ for $(X,\{x\})$
and call this a \dfn{pointed diffeological space}.

Now let $(X,x)$ be a pointed diffeological space.
The \dfn{loop space} of $(X,x)$ is the space
$\Omega(X,x) = C^{\oo}((\R, \{0, 1\}), (X,x))$,
with basepoint the constant loop at $x$.
We inductively define $\Omega^0(X,x) = (X,x)$ and,
for $n > 0$, $\Omega^n(X,x) = \Omega(\Omega^{n-1}(X,x))$.
The \dfn{$n^{th}$ smooth homotopy group $\pi_n^D(X,x)$} is defined to be $\pi_0^D(\Omega^n(X,x))$.
For $n \geq 1$, $\pi_n^D(X,x)$ is a group:  the product is defined by observing that each loop
is equivalent to a stationary loop and composing such loops in the usual way.
One can show that $\pi_n^D(X,x)$ is an abelian group if $n \geq 2$.
These constructions are functorial.

To avoid needing to choose stationary loops for the group multiplication,
one can require all paths and loops appearing above to be stationary.
This gives the stationary loop spaces $\tilde{\Omega}^n(X,x)$ and
new functors $\tilde{\pi}_n^D$.  It is not hard to show that
there is a natural isomorphism $\tilde{\pi}_n^D(X,x) \cong \pi_n^D(X,x)$ for each $n \geq 0$.

Since $\Diff$ is cartesian closed (see Theorem~\ref{th:cartesian-closed}),
a function $f$ in $\Omega^n(X,x)$ can be regarded as a smooth map
$\tilde{f} : \R^n \to X$ which sends
$\{(x_1,\ldots,x_n) \in \R^n \mid x_i=0 \text{ or } 1 \text{ for some } i \}$ to $x$,
and $\pi_n^D(X, x)$ consists of smooth path components in the space of such maps.
Unfortunately, while the stationarity condition makes composition easier,
if the definition of $\tilde{\pi}_n^D(X, x)$ is unravelled it leads to
a highly irregular condition on maps $\tilde{f} : \R^n \to X$ because
the $\epsilon$ can vary in an uncontrolled way.

We next show that a variety of natural definitions of the smooth
homotopy groups of a pointed diffeological space agree.
This will be used to prove Theorem~\ref{fib-hg-compare}.

\begin{thm}\label{equidef}
For each pair $(A,B)$ of diffeological spaces listed below,
there is a natural bijection between $[(A,B), (X,x)]$ and $\pi_n^D(X,x)$,
where $(X,x)$ is a pointed diffeological space.
\begin{enumerate}
\item\label{item:1} $(\R^n,\partial \R^n)$,
where $\partial \R^n=\{(x_1,\ldots,x_n) \in \R^n \mid x_i=0 \text{ or } 1 \text{ for some } i \}$;

\item\label{item:2} $(\R^n,\partial_{\eps} \R^n)$,
where $\partial_{\eps} \R^n = \{(x_1,\ldots,x_n) \in \R^n
\mid x_i \leq \eps \text{ or } x_i \geq 1-\eps \text{ for some } i \}$,
and $\epsilon \in (0, 1/2)$ is a fixed real number;

\item\label{item:3} $(I^n,\partial I^n)$,
where $I^n$ is the unit cube in $\R^n$ with the sub-diffeology,
and $\partial I^n$ is its boundary;

\item\label{item:4} $(I^n, \partial_{\eps} I^n)$,
where
$\partial_{\eps} I^n = \{(x_1,\ldots,x_n) \in I^n
\mid x_i \leq \eps \text{ or } x_i \geq 1-\eps \text{ for some } i\}$,
and $\epsilon \in (0, 1/2)$ is a fixed real number;

\item\label{item:5} $(\mathbb{A}^n,\partial \mathbb{A}^n)$,
where $\mathbb{A}^n=\{(x_0,x_1,\ldots,x_n) \in \R^{n+1} \mid \sum x_i = 1 \}$ with the sub-diffeology, and $\partial \mathbb{A}^n=\{(x_0,\ldots,x_n) \in \mathbb{A}^n \mid x_i=0 \text{ for some } i \}$;

\item\label{item:6} $(\mathbb{A}^n,\partial_{\eps} \A^n)$,
where $\partial_{\eps} \A^n = \{(x_0,\ldots,x_n) \in \mathbb{A}^n \mid x_i \leq \epsilon \text{ for some } i \}$,
and $\epsilon \in (0, 1/(n+1))$ is a fixed real number;

\item\label{item:7} $(D^n, \partial D^n)$, where $D^n$ is the unit ball in $\R^n$, and
$\partial D^n = S^{n-1}$ is the unit sphere;

\item\label{item:8} $(D^n, \partial_{\eps} D^n)$,
where $\partial_{\eps} D^n = \{ x \in D^n \mid \|x\| > 1 - \eps \}$,
and $\epsilon \in (0, 1/2)$ is a fixed real number;

\item\label{item:9} $(S^n,N)$,
where $S^n$ is the unit sphere in $\R^{n+1}$, and $N=(0,\ldots,0,1)$ is the north pole.

\end{enumerate}
In fact, in cases (\ref{item:1}) through (\ref{item:8}), the pairs are smoothly homotopy equivalent.
\end{thm}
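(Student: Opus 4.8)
The strategy is to establish the smooth homotopy equivalences among the pairs in (1)--(8), and then to handle $(S^n, N)$ separately by comparing it directly to one of the already-treated pairs (say $(D^n,\partial D^n)$ or $(\R^n,\partial\R^n)$), since the collapsing map $S^n \to S^n/N$ isn't itself a homotopy equivalence of pairs in the naive sense and must be treated with a little care. The whole argument is powered by the existence of cut-off functions.

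\textbf{Step 1: the ``thickening'' equivalences.} First I would show $(\R^n,\partial\R^n) \simeq (\R^n,\partial_\eps\R^n)$, $(I^n,\partial I^n) \simeq (I^n,\partial_\eps I^n)$, $(\A^n,\partial\A^n)\simeq(\A^n,\partial_\eps\A^n)$, and $(D^n,\partial D^n)\simeq(D^n,\partial_\eps D^n)$. In each case the two spaces underlying the pair are literally the same diffeological space; only the chosen subspace changes. For the cube, pick an $\eps$-cut-off function $\phi$ and let $h_t = (1-t)\id + t\,\psi$ where $\psi$ is a self-diffeomorphism of $I^n$ that carries the corner region $\partial_\eps I^n$ into a neighborhood that deformation-retracts onto $\partial I^n$ --- more simply, build a smooth isotopy $\psi_t: I^n \to I^n$, $\psi_0 = \id$, with $\psi_1(\partial_\eps I^n)$ contained in $\partial I^n$ is impossible (wrong dimension), so instead one produces maps of pairs in both directions: the inclusion-type map $(I^n,\partial I^n)\hookrightarrow(I^n,\partial_\eps I^n)$ and a map $r:(I^n,\partial_\eps I^n)\to(I^n,\partial I^n)$ built coordinatewise from a cut-off function that pushes the $\eps$-collar onto the face, and then verify $r$ and the inclusion are two-sided smooth homotopy inverses through maps of pairs, using that the straight-line homotopy stays inside the relevant subspaces because cut-off functions are monotone-compatible. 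The same recipe works for $\R^n$, $\A^n$ (using barycentric-type coordinates so the cut-off acts on each $x_i$), and $D^n$ (using a radial cut-off). This is routine but needs the homotopies written down with enough care that they genuinely respect the subspaces at every time $t$.

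\textbf{Step 2: comparing the different shapes.} Next I would exhibit smooth homotopy equivalences of pairs between the ``standard'' models $(\R^n,\partial\R^n)$, $(I^n,\partial I^n)$, $(\A^n,\partial\A^n)$, $(D^n,\partial D^n)$. The cube and the non-compact simplex $\A^n$ are smoothly diffeomorphic as pairs via the obvious affine-type chart that sends faces to faces (after possibly composing with Step 1 to fix up the boundary images); $(\R^n,\partial\R^n)\simeq(I^n,\partial I^n)$ by a smooth map $\R^n \to I^n$ that is the identity on a large central cube, collapses appropriately near $\partial_\eps\R^n$, and has a smooth section, using cut-off functions in each coordinate; and $(I^n,\partial I^n)\simeq(D^n,\partial D^n)$ by a smooth ``squaring the circle'' map and its inverse, smoothed near the corners with a cut-off function so everything is genuinely $C^\infty$ and carries boundary to boundary. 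It is cleanest to route everything through one model (I'd use $\A^n$ since that's what $S^D$ is built on, or $I^n$ for concreteness) so that only $O(1)$ equivalences need to be checked rather than all pairs.

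\textbf{Step 3: the bijection with $\pi_n^D(X,x)$, and $(S^n,N)$.} Since smooth homotopy of maps is compatible with composition and a smooth homotopy equivalence of pairs $(A,B)\simeq(A',B')$ induces a bijection $[(A,B),(X,x)]\cong[(A',B'),(X,x)]$ for every $(X,x)$, it suffices to prove the claimed bijection for one model; I would take $(\R^n,\partial\R^n)$, for which $[(\R^n,\partial\R^n),(X,x)] = \pi_0^D\bigl(C^\infty((\R^n,\partial\R^n),(X,x))\bigr)$ is, after unwinding cartesian closedness iteratively and matching up the boundary condition $\partial\R^n \mapsto x$ with the definition of $\Omega^n(X,x)$, literally $\pi_n^D(X,x)$ --- this is essentially the observation already recorded in the paragraph preceding the theorem, promoted to a natural bijection by checking naturality in $(X,x)$, which is immediate from functoriality of everything in sight. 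Finally, for $(S^n,N)$: the quotient map $q:(\R^n,\partial\R^n)\to(\R^n/\partial\R^n,\ast)$ and a smooth identification of $\R^n/\partial\R^n$ with $S^n$ (built via a smooth map $\R^n\to S^n$ that is constant $=N$ on $\partial_\eps\R^n$, using a cut-off function, and descends to a diffeomorphism on the quotient) induce $[(S^n,N),(X,x)]\to[(\R^n,\partial\R^n),(X,x)]$; I'd show this is a bijection by noting that a smooth map $(\R^n,\partial\R^n)\to(X,x)$ which is constant near $\partial_\eps\R^n$ (and every class has such a representative, by precomposing with the Step 1 retraction) factors smoothly through the quotient, and likewise for homotopies.

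\textbf{Main obstacle.} The delicate point throughout is \emph{smoothness at the boundary and corners}: maps like ``radial stretch'' or ``square to disk'' are only piecewise-smooth unless modified by cut-off functions, and one must check that after this modification the maps still send the designated subspace exactly into the designated subspace (not just into a neighborhood of it) and that the connecting homotopies do so at every time $t$. Getting the quotient step for $(S^n,N)$ right --- ensuring that ``constant near the boundary'' representatives exist functorially and that the induced map on quotients is genuinely smooth for the quotient diffeology --- is where I expect to spend the most effort; the rest is a careful but essentially bookkeeping-level deployment of cut-off functions.
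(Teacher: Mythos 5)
Your overall strategy---cut-off functions, thickened boundaries as intermediaries, the principle that a smooth homotopy equivalence of pairs induces a bijection on $[\,\cdot\,,(X,x)]$, and the identification $[(\R^n,\partial\R^n),(X,x)]\cong\pi_n^D(X,x)$ via cartesian closedness---is the paper's strategy, and your handling of $(S^n,N)$ via a collapse map $\R^n\to S^n$ that is constant on $\partial_\eps\R^n$ is a workable alternative to the paper's route through stereographic projection and the hemisphere pair $(S^n,H)$. But there is one concrete gap: Step~1 asserts that ``the same recipe works for $\A^n$ (using barycentric-type coordinates so the cut-off acts on each $x_i$).'' It does not. If you apply a cut-off function $\rho$ to each coordinate of a point of $\A^n$, the result $(\rho(x_0),\dots,\rho(x_n))$ no longer satisfies $\sum x_i=1$, so you have not produced a self-map of $\A^n$ at all. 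The paper's fix is the most delicate construction in the whole proof: compose with the renormalization $u(x)=x/\sum x_i$, which is only defined where $\sum x_i\neq 0$; one must then choose $\rho$ with $\rho(y)>0$ for $y\geq\tfrac{1}{n+1}$ and use that every point of $\A^n$ has some coordinate $\geq\tfrac{1}{n+1}$ to see that $\sum\rho(x_i)>0$, and---the part most easily overlooked---check that the affine homotopy $\alpha_t(y)=ty+(1-t)\rho(y)$ from $\rho$ to the identity also keeps $\sum\alpha_t(x_i)>0$ for all $t$, so that $u\circ\alpha_t^{n+1}$ is a well-defined smooth homotopy through maps of pairs. (This is also why the hypothesis in case~(6) is $\eps\in(0,1/(n+1))$ rather than $(0,1/2)$.) Without this, the equivalence $(\A^n,\partial\A^n)\simeq(\A^n,\partial_\eps\A^n)$---precisely the case needed later for Theorem~\ref{fib-hg-compare}---is not established.

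A smaller inaccuracy in Step~2: $(I^n,\partial I^n)$ and $(\A^n,\partial\A^n)$ are not diffeomorphic as pairs ($I^n$ is compact while $\A^n\cong\R^n$ is not), so the ``obvious affine-type chart'' does not exist. The paper sidesteps all direct comparisons between the $\eps=0$ pairs of different shapes by using $(\R^n,\partial_\eps\R^n)$ as the hub: at the thickened level each pair includes into another after a suitable dilation, cut-off maps go back, and affine homotopies do the rest; only the passage from each thickened pair to its own $\eps=0$ version requires separate work. You have all the tools to repair both points, but as written your recipe for (5)$\iff$(6) produces maps that do not land in $\A^n$.
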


\begin{proof}
As explained earlier, it follows from the cartesian closedness of
$\Diff$ that $[(\R^n, \partial\R^n), (X,x)] \cong \pi_n^D(X,x)$.

One can also show, using cartesian closedness several times, that
if pairs $(A,B)$ and $(A',B')$ are smoothly homotopy equivalent,
then so are the diffeological spaces $C^\oo((A,B),(X,U))$ and
$C^\oo((A',B'),(X,U))$ for any pair $(X,U)$.
In particular, $[(A,B),(X,U)] \cong [(A',B'),(X,U)]$.
We will prove that the pairs (\ref{item:1}) through (\ref{item:8}) are smoothly homotopy
equivalent, and then separately prove that the pair (\ref{item:9}) gives rise
to an equivalent set of homotopy classes.

(\ref{item:1}) $\iff$ (\ref{item:2}):
Consider the inclusion map $(\R^n, \partial \R^n) \hookrightarrow (\R^n, \partial_{\eps} \R^n)$
and the map $\phi^n \!: (\R^n, \partial_{\eps} \R^n) \to (\R^n, \partial \R^n)$
which applies an $\eps$-cut-off function coordinate-wise.
Both composites are homotopic to the identity via the affine homotopy,
which can be checked to preserve the appropriate subsets.

(\ref{item:3}) $\iff$ (\ref{item:2}) and (\ref{item:4}) $\iff$ (\ref{item:2}): These are proved using the same
argument, by considering the inclusion into $(\R^n, \partial_{\eps} \R^n)$
and the map $\phi^n$ in the other direction.

(\ref{item:6}) $\iff$ (\ref{item:2}):
Consider the diffeomorphism $\psi: \A^n \ra \R^n$ defined by
$(x_0,x_1,\ldots,x_n) \ra (x_1,\ldots,x_n)$.
Its inverse sends $\partial_{\eps} \R^n$ into $\partial_{\eps} \A^n$,
but $\psi$ itself is not a map of pairs.
However, if we first dilate $\A^n$ by a large enough factor
and then apply $\psi$, this is a map of pairs,
and it is easy to see that the two composites are smoothly homotopic to the identity.

(\ref{item:5}) $\iff$ (\ref{item:6}): Consider the inclusion
$i : (\mathbb{A}^n,\partial \mathbb{A}^n) \hookrightarrow
     (\mathbb{A}^n,\partial_{\eps} \A^n)$.
We construct a map in the other direction as follows.
Let $\rho: \R \ra \R$ be an $\eps$-cut-off function
such that $\rho(y) > 0$ if $y \geq \frac{1}{n+1}$.
Define
\[
  u: \R^{n+1} \setminus \{(x_0,\ldots,x_n) \in \R^{n+1} \mid \sum x_i = 0 \} \lra \mathbb{A}^n
\]
by
\[
  u(x_0,\ldots,x_n) = \left( \frac{x_0}{\sum x_i}, \ldots, \frac{x_n}{\sum x_i} \right).
\]
Then $u \circ \rho^{n+1}$ is a well-defined map from $(\mathbb{A}^n,\partial_{\eps} \A^n)$
to $(\mathbb{A}^n,\partial \mathbb{A}^n)$:
we have $\sum \rho(x_i) > 0$ since $x_i \geq 1/(n+1)$ for some $i$.
If we replace $\rho$ with the affine homotopy $\alpha_t$ defined by $\alpha_t(y) = t y + (1-t)\rho(y)$,
then $\sum \alpha_t(x_i) > 0$ for each $t \in I$,
and so $u \circ \alpha_t^{n+1}$ is well-defined and smooth as a function $\A^n \times I \to \A^n$.
It provides a smooth homotopy between $u \circ \rho^{n+1} \circ i$ and the identity
on $(\A^n, \partial \A^n)$.
Moreover, the composite $i \circ u \circ \alpha_t^{n+1}$ provides a smooth homotopy
between $i \circ u \circ \rho^{n+1}$ and the identity on $(\A^n, \partial_{\eps} \A^n)$.

(\ref{item:8}) $\iff$ (\ref{item:2}): This is similar to (\ref{item:6}) $\iff$ (\ref{item:2});  each pair includes in the
other after an appropriate scaling.

(\ref{item:7}) $\iff$ (\ref{item:8}): This is proved using a radial cut-off function.

(\ref{item:9}):
Write $H$ for the northern hemisphere of $S^n$, and recall that $N$ is the north pole.
Using stereographic projection and suitable rescalings, one can see
using the methods above that the pairs $(S^n \setminus N, H \setminus N)$
and $(\R^n, \partial_{\eps} \R^n)$ are smoothly homotopy equivalent.
Next, observe that the mapping spaces
$C^\oo((S^n \setminus N, H \setminus N), (X,x))$ and
$C^\oo((S^n, H), (X,x))$ are diffeomorphic, since every constant function
on $H \setminus N$ extends uniquely to a smooth function on $H$.
Finally, by gradually raising the equator and using a cut-off function,
one sees that the pairs $(S^n, H)$ and $(S^n, N)$ are smoothly homotopy equivalent.
\end{proof}

Theorem~\ref{equidef} implies that for each of the pairs $(A, B)$ considered
above, the set $[(A,B), (X,x)]$
inherits a natural group structure from $\pi_n^D(X,x)$, for $n \geq 1$.
One can make the formulas explicit as needed, by using the maps between
the pairs that were described in the proof.

\begin{rem}
Similar methods apply to other pairs, which impose variants on the
above stationarity conditions.
For example, one can consider the pairs (\ref{item:2}), (\ref{item:4}), (\ref{item:6}) and (\ref{item:8}) with $\eps =  0$.
While the proofs above do not go through in this case, it is nevertheless
easy to see, for example, that the pairs $(\R^n, \partial_0 \R^n)$ and
$(\R^n, \partial_{\eps} \R^n)$ are smoothly homotopy equivalent for $\eps > 0$,
and so $(\R^n, \partial_0 \R^n)$ may also be used in the definition of
the smooth homotopy groups.

In another direction, one can also show that it is equivalent to allow
$\eps$ to vary.  For example, in case (\ref{item:2}), one could consider the
set $\{ f  \in C^\oo(\R^n, X) \mid f(\partial_{\eps} \R^n) = x \text{ for some } \eps > 0 \}$
with the sub-diffeology from $C^\oo(\R^n, X)$.
Similar methods show that the set of path components of this space again
bijects with $\pi_n^D(X,x)$.
\end{rem}

\medskip

The following result is straightforward.

\begin{prop}
Let $\{(X_j,x_j)\}_{j \in J}$ be a family of pointed diffeological spaces.
Then the canonical map
$\pi_n^D(\prod_j X_j, (x_j)) \ra \prod_j \pi_n^D(X_j, x_j)$
is an isomorphism, for each $n \in \N$.
\end{prop}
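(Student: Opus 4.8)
The plan is to reduce the statement to two elementary facts: that the loop space construction commutes with arbitrary products, and that $\pi_0^D$ commutes with arbitrary products. Iterating the first and then applying the second, both indexed by $n$, yields the isomorphism directly from the definition $\pi_n^D(X,x) = \pi_0^D(\Omega^n(X,x))$.

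First I would show that for the pointed diffeological space $(\R,\{0,1\})$ there is a natural diffeomorphism
\[
  C^\oo\big((\R,\{0,1\}),\ (\prod_{j} X_j,\ (x_j))\big)\ \cong\ \prod_{j} C^\oo\big((\R,\{0,1\}),\ (X_j,x_j)\big),
\]
i.e.\ $\Omega(\prod_j X_j, (x_j)) \cong \prod_j \Omega(X_j, x_j)$ as pointed diffeological spaces, the basepoints being the constant loops. On underlying sets this is immediate, since a smooth map $\R \to \prod_j X_j$ is the same thing as a family of smooth maps $\R \to X_j$ (a function $U \to \prod_j X_j$ is a plot iff each component $U \to X_j$ is a plot), and the boundary condition $f(\{0,1\}) = (x_j)$ holds iff $f_j(\{0,1\}) = x_j$ for every $j$. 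That this bijection respects the diffeologies follows by unwinding definitions: a map $U \to C^\oo((\R,\{0,1\}),(\prod_j X_j,(x_j)))$ is a plot iff $U \times \R \to \prod_j X_j$ is smooth iff each $U \times \R \to X_j$ is smooth iff the corresponding map $U \to \prod_j C^\oo((\R,\{0,1\}),(X_j,x_j))$ is a plot, using that the function-space, product and sub-diffeologies are all detected by composing with the evident maps. Iterating gives $\Omega^n(\prod_j X_j, (x_j)) \cong \prod_j \Omega^n(X_j, x_j)$ for every $n$.

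Next I would verify that the canonical map $\pi_0^D(\prod_j Y_j) \to \prod_j \pi_0^D(Y_j)$ is a bijection for any family $\{Y_j\}$ of diffeological spaces. A smooth path $\R \to \prod_j Y_j$ is precisely a family of smooth paths $\R \to Y_j$, so $(y_j) \simeq (y'_j)$ implies $y_j \simeq y'_j$ for each $j$, giving injectivity; conversely, if $y_j \simeq y'_j$ for each $j$, choose a smooth path $f_j \colon \R \to Y_j$ with $f_j(0) = y_j$ and $f_j(1) = y'_j$, and then $(f_j)_j$ is a smooth path in $\prod_j Y_j$, giving surjectivity. Applying this with $Y_j = \Omega^n(X_j, x_j)$ and using the previous paragraph produces a bijection $\pi_n^D(\prod_j X_j, (x_j)) \to \prod_j \pi_n^D(X_j, x_j)$. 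This bijection is exactly the canonical map, since it is induced coordinatewise by the smooth projections $\prod_j X_j \to X_k$; by functoriality of $\pi_n^D$ it is a group homomorphism when $n \geq 1$, and a bijective homomorphism is an isomorphism.

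I expect no real obstacle: all the content lies in the two ``commutes with products'' statements above, both routine. The only points needing a moment's attention are the verification that the function-space diffeology on a product of diffeological spaces is the product of the function-space diffeologies (which rests on hom-out-of commuting with limits together with the stability of sub-diffeologies under products), and the harmless use of the axiom of choice to assemble a path in $\prod_j Y_j$ from paths in the factors when $J$ is infinite.
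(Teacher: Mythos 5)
Your proof is correct and is exactly the routine argument the paper has in mind: the paper states this proposition with no proof at all (it is introduced only by ``The following result is straightforward''), and your reduction to the facts that $\Omega$ and $\pi_0^D$ commute with products supplies precisely the missing details. One cosmetic slip: in your $\pi_0^D$ step the two implications are mislabelled --- projecting a path to the factors gives \emph{well-definedness} of the canonical map (surjectivity is then immediate), while assembling the chosen paths $f_j$ into a path in $\prod_j Y_j$ gives \emph{injectivity} --- but both implications are proved, so the argument stands.
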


\begin{prop}
If $G$ is a diffeological group with identity $e$,
then $\pi_0^D(G)$ is a group, and $\pi_1^D(G,e)$ is an abelian group.
\end{prop}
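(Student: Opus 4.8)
The plan is to handle the two statements in turn, deriving the second from the first via an Eckmann--Hilton argument. Write $\mu\colon G\times G\to G$ and $\iota\colon G\to G$ for the smooth multiplication and inversion. First I would check that these descend to $\pi_0^D(G)=G/{\simeq}$: if $f$ is a smooth path from $x$ to $x'$ and $g$ a smooth path from $y$ to $y'$, then $t\mapsto\mu(f(t),g(t))$ is a smooth path from $xy$ to $x'y'$, and $t\mapsto\iota(f(t))$ is a smooth path from $x^{-1}$ to $(x')^{-1}$; equivalently, apply $\pi_0^D$ to $\mu$ and $\iota$ using the fact (proven just above) that $\pi_0^D$ preserves products. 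The group axioms for $G$ then pass to the quotient verbatim, so $\pi_0^D(G)$ is a group with identity the path component of $e$.

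For the second statement, the key observation is that the loop space $\Omega(G,e)=C^\oo((\R,\{0,1\}),(G,e))$ is itself a diffeological group under the pointwise product $(\gamma\cdot\delta)(t)=\mu(\gamma(t),\delta(t))$, with identity the constant loop $c_e$ at $e$: since $\mu(e,e)=e$, the pointwise product of two loops at $e$ is again one, and cartesian closedness of $\Diff$ (Theorem~\ref{th:cartesian-closed}) together with smoothness of $\mu$, $\iota$ and of the evaluation map shows that the induced operations on $\Omega(G,e)$ are smooth. Applying the first part of the proposition to this diffeological group shows that $\pi_1^D(G,e)=\pi_0^D(\Omega(G,e))$ is a group under the operation $\cdot$ induced by the pointwise product, with identity $[c_e]$.

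On the other hand, $\pi_1^D(G,e)$ already carries its standard group structure given by concatenation of (stationary) loops, which I will write $*$ and which also has identity $[c_e]$. To conclude, I would verify the interchange law $([\alpha]*[\beta])\cdot([\gamma]*[\delta])=([\alpha]\cdot[\gamma])*([\beta]\cdot[\delta])$ on stationary representatives: if $\alpha,\beta,\gamma,\delta$ are stationary loops at $e$, then so are the pointwise products $\alpha\cdot\gamma$ and $\beta\cdot\delta$, the concatenations are given by the usual halving-and-rescaling formula, and a direct check shows that both sides of the interchange identity are defined by literally the same formula in $t$. With two unital binary operations on the set $\pi_1^D(G,e)$ satisfying this interchange law, the Eckmann--Hilton argument forces $*=\cdot$ and forces this common operation to be commutative, so $\pi_1^D(G,e)$ is abelian. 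I expect the only step needing genuine care is the interchange-law verification, where one must keep the stationarity and reparametrization bookkeeping aligned so that the two composites agree on the nose; the rest is a routine transport of structure along smooth maps.
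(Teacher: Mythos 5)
Your proposal is correct and is precisely the standard ``formal'' argument the paper invokes (its entire proof is the remark that the claim is formal): $\pi_0^D$ preserves products and hence sends group objects to groups, and the Eckmann--Hilton interchange between pointwise multiplication and concatenation on $\Omega(G,e)$ forces $\pi_1^D(G,e)$ to be abelian. Your spelled-out version, including the check that stationary representatives make the interchange law hold on the nose, fills in exactly the details the paper omits.
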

\begin{proof}
This is formal.
\end{proof}

\subsection{The $D$-topology}\label{ss:D-topology}

In this subsection we recall the $D$-topology, which is a natural
topology on the underlying set of any diffeological space.
We summarize the basic properties of the $D$-topology,
and compare the smooth homotopy groups of a diffeological space
and the usual (continuous) homotopy groups of its underlying topological space.
Finally, we observe that the smooth approximation theorem does not hold for general diffeological spaces.

\begin{de}[\cite{I1}]
Given a diffeological space $X$, the final topology induced by its plots,
where each domain is equipped with the standard topology,
is called the \dfn{$D$-topology} on $X$.
\end{de}

\begin{ex}
The $D$-topology on a smooth manifold with the standard diffeology
coincides with the usual topology on the manifold.
\end{ex}

A smooth map $X \ra X'$ is continuous when $X$ and $X'$ are equipped with the
$D$-topology, and so this defines a functor $D: \Diff \ra \Top$ to the
category of topological spaces.

Every topological space $Y$ has a natural diffeology,
called the \dfn{continuous diffeology}, whose plots $U \ra Y$
are the continuous maps.
A continuous map $Y \ra Y'$ is smooth when $Y$ and $Y'$ are equipped
with the continous diffeology, and so this defines a functor
$C: \Top \ra \Diff$.

\begin{prop}[\cite{SYH}]\label{Difftopadj}
The functors $D: \Diff \rightleftharpoons \Top :C$ are adjoint.
\end{prop}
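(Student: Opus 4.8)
The plan is to establish the adjunction $D \dashv C$ by exhibiting a natural bijection
\[
  \Top(D(X), Y) \cong \Diff(X, C(Y))
\]
for a diffeological space $X$ and a topological space $Y$, and then observing that $C \dashv D$ would be the wrong variance — so the real content is to check which direction the unit and counit go. First I would unwind the definitions: a smooth map $f : X \to C(Y)$ is a set map such that $f \circ p$ is a plot of $C(Y)$ for every plot $p : U \to X$, and plots of $C(Y)$ are exactly the continuous maps $U \to Y$. A continuous map $g : D(X) \to Y$ is a set map such that $g^{-1}$ of every open set is $D$-open in $X$, i.e.\ such that $g \circ p$ is continuous for every plot $p : U \to X$ (by definition of the final topology). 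So both conditions on a set map $X \to Y$ say precisely the same thing: post-composition with it carries plots of $X$ to continuous maps out of Euclidean opens. Hence the identity on underlying sets furnishes the bijection, and it is visibly natural in both variables since both sides are subsets of the set of all functions from the underlying set of $X$ to that of $Y$, cut out by the same condition.

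More carefully, I would organize the argument in three short steps. Step one: record that for a set map $g : X \to Y$, continuity of $g : D(X) \to Y$ is equivalent to $g \circ p : U \to Y$ being continuous for all plots $p$ of $X$; this is immediate from the universal property of the final topology, and indeed one should note that $U$ carries the standard (Euclidean) topology, which is also the $D$-topology of $U$ as a manifold. Step two: record that smoothness of $g : X \to C(Y)$ is equivalent to $g \circ p$ being a plot of $C(Y)$ for all plots $p$ of $X$, which by definition of the continuous diffeology means $g \circ p$ continuous for all $p$. Step three: conclude the two conditions coincide, define the bijection to be the identity on underlying-set functions, and check naturality, which is automatic. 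I would also remark that the unit $X \to C(D(X))$ and counit $D(C(Y)) \to Y$ are both identities on underlying sets, so the triangle identities hold trivially; alternatively one can cite that an adjunction is determined by a natural family of bijections on hom-sets.

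There is no real obstacle here — the statement is essentially a definitional unwinding, and the ``hard part'' is only cosmetic: being careful about the variance (it is $D \dashv C$, with $D$ left adjoint, not the other way around) and about the fact that a plot's domain is topologized by its standard Euclidean topology, so that ``continuous after composing with all plots'' is genuinely the same test for $D$-continuity as for membership in the continuous diffeology. Since the proof is so short, I would present it essentially in full rather than merely sketch it; it also makes transparent why $D$ preserves colimits (it is a left adjoint) while $C$ preserves limits (it is a right adjoint), facts used implicitly later in the paper.
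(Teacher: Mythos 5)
Your proof is correct: the paper itself gives no argument (it simply cites~\cite{SYH}), and your definitional unwinding --- that a set map $X \to Y$ is continuous as a map $D(X) \to Y$ if and only if it is smooth as a map $X \to C(Y)$, since both conditions say exactly that composition with every plot of $X$ is continuous --- is the standard and intended proof. The identification of the two hom-sets as the same subset of all functions makes naturality immediate, so nothing further is needed.
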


For more discussion on the $D$-topology, see~\cite[Chapter 2]{I2} and~\cite{CSW}.
In the rest of this subsection,
we focus on the comparison between the smooth homotopy groups of a diffeological space $X$
and the usual (continuous) homotopy groups of $D(X)$.

\medskip

By Theorem~\ref{equidef}, for any $n \in \N$, there is a natural transformation $j_n:\pi_n^D(X,x) \ra \pi_n (D(X), x)$.

\begin{prop}[\cite{I1}]
Let $X$ be a diffeological space.
Then $j_0:\pi_0^D(X) \ra \pi_0(D(X))$ is a bijection. That is,
$\pi_0^D(X)$ coincides with the usual (continuous) path components of $X$ under the $D$-topology.
\end{prop}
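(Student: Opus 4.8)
The plan is to show that the two equivalence relations on the underlying set of $X$ coincide: the relation $\simeq$ ``joined by a smooth path'', whose quotient is $\pi_0^D(X)$, and the relation ``joined by a continuous path in $D(X)$'', whose quotient is $\pi_0(D(X))$. Every smooth path $f:\R\to X$ is a plot of $X$ (apply smoothness to $\id_\R$), hence is continuous as a map $\R\to D(X)$ by the very definition of the $D$-topology as the final topology induced by the plots; restricting to $I$ shows $\simeq$ refines the continuous-path relation. Therefore $j_0$ is the induced surjection $X/{\simeq}\twoheadrightarrow X/{\sim}$, and only injectivity remains: a continuous path in $D(X)$ from $x$ to $y$ must force $x\simeq y$.

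The key step is to prove that every smooth path component $C\subseteq X$ is $D$-open. By definition of the $D$-topology, this reduces to showing $p^{-1}(C)$ is open in $U$ for each plot $p:U\to X$. Given $u_0\in p^{-1}(C)$, pick an open ball $B\subseteq U$ about $u_0$; for any $u\in B$ the straight-line path $t\mapsto (1-t)u_0+tu$ is a smooth path in $U$, so composing with $p$ gives a smooth path in $X$ from $p(u_0)$ to $p(u)$. Hence $p(u)$ lies in the smooth path component of $p(u_0)\in C$, so $B\subseteq p^{-1}(C)$, and $p^{-1}(C)$ is open.

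Granting this, each smooth path component is also $D$-closed, being the complement of the union of the other (open) smooth path components; so the smooth path components partition $X$ into $D$-clopen subsets. If $\gamma:I\to D(X)$ is a continuous path, its image is connected, hence lies in a single smooth path component, so $\gamma(0)$ and $\gamma(1)$ are smoothly path-connected. Thus the continuous-path relation refines $\simeq$ as well, the two relations agree, and $j_0$ is a bijection.

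I do not expect a genuine obstacle here; the proof is short. The only point that must be handled correctly is the use of the $D$-topology as a \emph{final} topology with respect to the plots, which is exactly what converts local (straight-line) smooth path-connectedness in Euclidean domains into $D$-openness of the smooth path components; everything else is a connectedness argument.
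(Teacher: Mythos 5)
The paper does not prove this proposition; it is cited from Iglesias-Zemmour's thesis \cite{I1}, so there is no in-text argument to compare against. Your proof is correct and is the standard one: smooth paths are $D$-continuous (giving well-definedness and surjectivity of $j_0$), and the convexity argument on plot domains shows each smooth path component is $D$-open, hence $D$-clopen, so a continuous path in $D(X)$ cannot leave a smooth path component. The only point worth tightening is that the straight-line segment from $u_0$ to $u$ should be taken as a map $I\to B$ (or reparametrized via a cut-off function to a map $\R\to B$) so that it stays in the plot's domain; with that, composing with the plot gives a genuine smooth path in $X$ in the paper's sense.
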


The classical smooth approximation theorem shows:

\begin{prop}\label{mfd}
Let $(X,x)$ be a pointed smooth manifold.
Then $j_n:\pi_n^D(X,x) \ra \pi_n(D(X),x)$ is an isomorphism for each $n \in \N$.
\end{prop}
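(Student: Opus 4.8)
The plan is to reduce the statement to the classical fact that the smooth and continuous homotopy groups of a smooth manifold agree, which is exactly what the smooth approximation theorem provides. Concretely, fix a pointed smooth manifold $(X,x)$ and $n \in \N$. By Theorem~\ref{equidef}, $\pi_n^D(X,x)$ is naturally isomorphic to $[(I^n, \partial I^n), (X,x)]$, where the mapping space is taken in $\Diff$. On the topological side, $\pi_n(D(X), x) = \pi_n(X,x)$ (since the $D$-topology on a manifold is the usual topology) is the set of based continuous homotopy classes of maps $(I^n, \partial I^n) \to (X,x)$. The map $j_n$ is induced by the functor $D$: a smooth map $I^n \to X$ is in particular continuous, and a smooth homotopy is in particular a continuous homotopy.

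First I would check surjectivity of $j_n$. Given a continuous map $f : (I^n, \partial I^n) \to (X,x)$, the Whitney approximation theorem (in its relative form) produces a smooth map $g : I^n \to X$ that is continuously homotopic to $f$ rel $\partial I^n$; moreover $g$ can be taken to agree with $f$ on $\partial I^n$, so $g$ is a map of pairs and $j_n[g] = [f]$. One technical point: $I^n$ is a manifold with corners rather than without boundary, but smooth maps out of $I^n$ (with the sub-diffeology from $\R^n$) are precisely restrictions of smooth maps defined on a neighbourhood of $I^n$ in $\R^n$, so the classical approximation results apply directly; alternatively one can run the argument with the pair $(\R^n, \partial_\eps \R^n)$ from Theorem~\ref{equidef}(\ref{item:2}), where the source is genuinely a manifold without boundary.

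Next I would check injectivity. Suppose $f, g : (I^n, \partial I^n) \to (X,x)$ are smooth maps that are continuously homotopic through maps of pairs, via a continuous homotopy $H : I^n \times I \to X$ with $H(\partial I^n \times I) = \{x\}$ and $H(-,0) = f$, $H(-,1) = g$. The relative smooth approximation theorem, applied to the pair $(I^n \times I, \, (I^n \times \{0,1\}) \cup (\partial I^n \times I))$ on which $H$ is already smooth, yields a smooth homotopy $H'$ agreeing with $H$ on that closed subset; thus $H'$ is a smooth homotopy of pairs from $f$ to $g$, so $[f] = [g]$ in $\pi_n^D(X,x)$. Naturality in $(X,x)$ is automatic since $j_n$ is induced by the functor $D$ together with the natural identifications of Theorem~\ref{equidef}.

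The main obstacle is not conceptual but bookkeeping: making sure the relative version of the smooth approximation theorem is invoked correctly on manifolds with corners, and that "smooth map of pairs" and "smooth homotopy of pairs" in the diffeological sense coincide with the classical notions on $I^n$ and $I^n \times I$ with their sub-diffeologies. This is routine given that the sub-diffeology on $I^n \subset \R^n$ detects exactly the locally-extendable smooth maps, and given the characterization of $\pi_n^D$ via the pair $(I^n, \partial I^n)$ supplied by Theorem~\ref{equidef}.
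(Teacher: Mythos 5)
Your proposal is correct and follows exactly the route the paper intends: the paper offers no further proof beyond the sentence ``The classical smooth approximation theorem shows,'' and your argument is precisely a careful unpacking of that, using Theorem~\ref{equidef} to pass to the pair $(I^n,\partial I^n)$ (or $(\R^n,\partial_\eps\R^n)$) and then applying the relative Whitney approximation theorem for surjectivity and injectivity. The bookkeeping points you flag (corners on $I^n$, and arranging the homotopy to be smooth near the closed set before invoking the relative approximation, which the $\partial_\eps$ variants handle) are exactly the right ones and are handled appropriately.
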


The following result is easy to prove.

\begin{prop}\label{CX}
For any pointed topological space $(X,x)$,
the canonical map $\pi_n^D(C(X),x) \ra \pi_n(X,x)$ is an isomorphism for each $n \in \N$.
\end{prop}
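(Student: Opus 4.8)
The plan is to identify both sides with homotopy classes of maps out of $\R^n$, using $\R^n$ as one of the domains allowed by Theorem~\ref{equidef}, and exploiting the elementary fact that a smooth map into $C(X)$ from a Euclidean space is nothing more than a continuous map into $X$. I choose $\R^n$ rather than the more classical $I^n$ or $D^n$ precisely because $\R^n$ is a manifold without boundary, so that the fact that the $D$-topology of a manifold is its usual topology applies directly; using cubes or balls would instead require the analogous statement for manifolds with corners.

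First I would record the key observation: since $D(\R^k)=\R^k$ as topological spaces, the adjunction $D:\Diff\rightleftharpoons\Top:C$ of Proposition~\ref{Difftopadj} gives $C^\oo(\R^k, C(X)) = \Top(D(\R^k), X) = C(\R^k, X)$; that is, a set map $\R^k\to X$ is smooth as a map into $C(X)$ exactly when it is continuous as a map into $X$. Next, from the proof of Theorem~\ref{equidef}, cartesian closedness of $\Diff$ yields a natural bijection $\pi_n^D(C(X),x)\cong[(\R^n,\partial\R^n),(C(X),x)]$, the set of smooth homotopy classes of smooth maps of pairs. On the topological side, the homotopy equivalences of pairs constructed in the proof of Theorem~\ref{equidef} relating $(\R^n,\partial\R^n)$, $(\R^n,\partial_\eps\R^n)$ and $(I^n,\partial I^n)$ are given by smooth, hence continuous, maps, so they are also topological homotopy equivalences; combined with the classical identification $\pi_n(X,x)\cong[(I^n,\partial I^n),(X,x)]$, this gives a natural bijection $\pi_n(X,x)\cong[(\R^n,\partial\R^n),(X,x)]$, now meaning \emph{continuous} homotopy classes of continuous maps of pairs. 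Thus it remains to compare the smooth and continuous versions of $[(\R^n,\partial\R^n),(-)]$.

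For the maps: by the key observation (with $k=n$), smooth maps $\R^n\to C(X)$ and continuous maps $\R^n\to X$ are literally the same set maps, and the side condition of sending $\partial\R^n$ to $x$ is a condition on the underlying set map, so smooth maps of pairs $(\R^n,\partial\R^n)\to(C(X),x)$ coincide with continuous maps of pairs $(\R^n,\partial\R^n)\to(X,x)$. For the homotopies: a smooth homotopy of pairs, by cartesian closedness together with the fact that a smooth path may be spread over all of $\R$ using a cut-off function, amounts to a smooth map $H:\R\times\R^n\to C(X)$ carrying $\R\times\partial\R^n$ to $x$ and restricting to the two given maps at $0$ and $1$; applying the key observation with $k=n+1$ shows this is the same datum as a continuous map $H:\R\times\R^n\to X$ with the same properties, i.e.\ a continuous homotopy of pairs (and conversely a continuous homotopy parametrised by $I$ extends over $\R$ by precomposing with a retraction $\R\to I$). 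Hence the two sets of homotopy classes agree, and unravelling the identifications shows that the resulting bijection is exactly the canonical map, which sends the class of a smooth map $\R^n\to C(X)$ to the class of its underlying continuous map $\R^n\to X$.

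Since every step is a formal manipulation, I do not expect a serious obstacle. The only points needing a little care are the bookkeeping that the smooth and continuous homotopy relations genuinely coincide — handled by the cut-off-function trick for passing between homotopies parametrised by $I$ and by $\R$, and by applying the identity $C^\oo(\R^{n+1},C(X))=C(\R^{n+1},X)$ rather than only $C^\oo(\R^n,C(X))=C(\R^n,X)$ — and the verification that the pair homotopy equivalences from the proof of Theorem~\ref{equidef} remain valid in $\Top$, so that the right-hand side is honestly $\pi_n(X,x)$.
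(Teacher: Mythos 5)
Your proof is correct and complete. The paper offers no argument for Proposition~\ref{CX} (it is simply declared ``easy to prove''), and what you have written---identifying $C^\infty(\R^k,C(X))$ with $C(\R^k,X)$ via the adjunction of Proposition~\ref{Difftopadj}, and then matching smooth and continuous homotopy classes of maps of pairs out of $(\R^n,\partial\R^n)$ using case~(\ref{item:1}) of Theorem~\ref{equidef} and the fact that its homotopy equivalences of pairs are continuous---is precisely the routine verification the authors leave to the reader.
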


In general, $j_n$ may fail to be injective (Example~\ref{irrtorus}) or surjective (Example~\ref{earring}).
In fact, there may be no isomorphism between $\pi_n^D(X,x)$ and $\pi_n(D(X),x)$ (Example~\ref{irrtorus}).

\begin{ex}[Hawaiian earring]\label{earring}
Let $X = \cup_{n=1}^\infty \{ (a,b) \in \R^2 \mid (a-1/n)^2+b^2=1/n^2 \}$,
the union of circles of radius $1/n$ and center $(1/n, 0)$,
with the sub-diffeology from $\R^2$, and let $x=(0,0) \in X$.
We will show that the map $j_1:\pi_1^D(X,x) \ra \pi_1(D(X),x)$ is not surjective.
First we show that the $D$-topology on $X$ is the same as the sub-topology of $\R^2$.
It is enough to show that each $D$-open neighbourhood of $x$ contains all but finitely many circles.
So suppose $A$ is a subset containing $x$ but not containing infinitely many circles,
and choose a sequence $x_1, x_2, \ldots$ on circles of decreasing radii but not in $A$.
If the circles are chosen so that the radii decrease sufficiently quickly,
then there is a smooth curve $p : \R \to X$ which passes through these points in order,
say $p(t_i) = x_i$ with $0 < t_1 < t_2 < \cdots < 1$.
Then the $t_i$'s are not in $p^{-1}(A)$ but their limit $t$ is, since
we must have $p(t) = \lim x_i = x$.
Thus $A$ is not $D$-open.
Therefore, the $D$-topology on $X$ is the same as the sub-topology of $\R^2$.

Now we show that the map $j_1:\pi_1^D(X,x) \ra \pi_1(D(X),x)$ is not surjective.
This is because there is no smooth stationary curve $\R \ra X$ going around
every circle in $X$,
since the sum of the circumferences of all these circles is infinite,
and any smooth curve defined on a compact interval can only travel a finite distance.
Here we are using the fact the $\pi_1(D(X), x)$ contains an element that is
not represented by a path that is not surjective;
see, for example,~\cite[Section~71, Example~1]{M}.
\end{ex}

\begin{rem}\label{rem:sat}
The above example shows that the smooth approximation theorem does not hold for a general diffeological space $X$,
in the sense that if $f:S^n \ra D(X)$ is a continuous map,
then there may not exist a smooth map $g:S^n \ra X$ such that $f$ is (continuously) homotopic to $D(g)$.
\end{rem}

\subsection{Diffeological bundles}\label{ss:bundles}

Diffeological bundles are analogous to fiber bundles, but
are much more general than the most obvious notion of locally trivial bundle.
We review diffeological bundles in this subsection,
and reach the conclusion that the smooth homotopy groups of a diffeological space
are in general different from the usual (continuous) homotopy groups of its underlying topological space.
All material here can be found in~\cite{I1} and~\cite{I2}.

\begin{de}
Let $F$ be a diffeological space.
A smooth map $f:X \rightarrow Y$ between two diffeological spaces
is \dfn{trivial of fiber type $F$}
if there exists a diffeomorphism $h:X \rightarrow F \times Y$,
where $F \times Y$ is equipped with the product diffeology,
such that the following diagram is commutative:
\[
\xymatrix@C5pt{X \ar[dr]_f \ar[rr]^-h && F \times Y\  \ar[dl]^{pr_2} \\ & Y.}
\]
The map $f$ is \dfn{locally trivial of fiber type $F$}
if there exists a $D$-open cover $\{ U_i \}$ of $Y$ such that
$f|_{f^{-1}(U_i)}:f^{-1}(U_i) \ra U_i$ is trivial of fiber type $F$ for each $i$.
\end{de}

Being locally trivial turns out to be too strong a condition for many
applications, but is the correct notion for open subsets of $\R^n$.

\begin{de}
A smooth surjective map $f:X \ra Y$ between two diffeological spaces
is called a \dfn{diffeological bundle of fiber type $F$} if the pullback of $f$ along any plot
of $Y$ is locally trivial of fiber type $F$.
In this case, we call $F$ the \dfn{fiber} of $f$, $X$ the \dfn{total space}, and $Y$ the \dfn{base space}.
\end{de}

In~\cite{I2}, diffeological bundles are defined using groupoids,
but~\cite[8.9]{I2} shows that the definitions are equivalent.
Moreover, there is another equivalent characterization:

\begin{prop}[{\cite[8.19]{I2}}]\label{pr:global-plot}
A smooth surjective map $f:X \ra Y$ between two diffeological spaces
is a diffeological bundle of fiber type $F$ if and only if
the pullback of $f$ along any global plot of $Y$
(that is, a plot of the form $\R^n \ra Y$) is trivial of fiber type $F$.
\end{prop}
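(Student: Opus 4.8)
The plan is to prove the two implications separately. For the \emph{if} direction, suppose the pullback of $f$ along every global plot $\R^n \to Y$ is trivial of fiber type $F$, and let $p : U \to Y$ be an arbitrary plot, with $U$ open in some $\R^m$. Given $u \in U$, choose an open ball $B \subseteq U$ containing $u$ together with a diffeomorphism $\theta : \R^m \to B$ (open balls in $\R^m$ are diffeomorphic to $\R^m$). Then $p \circ \theta : \R^m \to Y$ is a global plot, so its pullback is trivial of fiber type $F$; since pullbacks compose, this pullback is isomorphic over $\R^m$ to $\theta^*(p^* f)$, and transporting the trivialization back through the diffeomorphism $\theta$ shows that the restriction of $p^* f$ over $B$ is trivial of fiber type $F$. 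Because the $D$-topology on the open set $U$ is the standard one, the balls $B$ obtained in this way form a $D$-open cover of $U$ witnessing that $p^* f$ is locally trivial of fiber type $F$. As $f$ is surjective by hypothesis, $f$ is a diffeological bundle of fiber type $F$.

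For the \emph{only if} direction, suppose $f$ is a diffeological bundle of fiber type $F$ and let $p : \R^n \to Y$ be a global plot. First, $p^* f : p^* X \to \R^n$ is again a diffeological bundle: it is surjective, and for any plot $q : V \to \R^n$ the pullback of $p^* f$ along $q$ equals the pullback of $f$ along the plot $p \circ q$ of $Y$, hence is locally trivial of fiber type $F$. Applying this with $q = \id_{\R^n}$ shows in particular that $p^* f$ is locally trivial of fiber type $F$ over $\R^n$. Thus the proposition reduces to the following claim: every smooth map $g : Z \to \R^n$ that is locally trivial of fiber type $F$ is in fact trivial of fiber type $F$.

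To prove the claim, fix local trivializations $h_i : g^{-1}(U_i) \to F \times U_i$ over an open cover $\{U_i\}$ of $\R^n$. On each overlap, $h_i \circ h_j^{-1}$ has the form $(y,u) \mapsto (\tau_{ij}(u)(y), u)$, and one checks that $\tau_{ij} : U_i \cap U_j \to \Diff(F)$ is smooth for the diffeology on $\Diff(F)$ (this is exactly where that diffeology is used) and that the $\tau_{ij}$ satisfy the cocycle identity. The goal is to correct the $h_i$, by composing with $\Diff(F)$-valued functions, so that they agree on overlaps and hence glue to a single diffeomorphism $Z \to F \times \R^n$ over $\R^n$. For $n = 1$ this is the classical argument for bundles over an interval: restrict to $[-N, N]$, choose a subdivision adapted to the cover, and build a trivialization over successively larger subintervals, at each stage correcting the next local trivialization by an element of $\Diff(F)$ so that it matches the part already constructed, using a cut-off function to reparametrize the relevant $\Diff(F)$-valued curve so that the correction extends over an open neighbourhood and the glued map stays smooth; letting $N \to \infty$, again with cut-off functions keeping the successive trivializations compatible, yields a trivialization over $\R$. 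The case of general $n$ follows by a similar induction, trivializing over an exhaustion of $\R^n$ by compact cubes and carrying out the one-dimensional extension with parameters over a compact cube in the remaining coordinates.

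The main obstacle is this last step: upgrading local triviality over $\R^n$ to global triviality. It is the diffeological counterpart of the classical fact that a fiber bundle over a contractible paracompact base is trivial, but it cannot simply be quoted from topology, since we need an honestly smooth (diffeological) trivialization rather than merely a continuous one; the homotopy-theoretic argument is therefore unavailable, and one must perform explicit smooth patching, with cut-off functions playing the role that convex combinations (via smooth partitions of unity) play in the vector bundle case. Everything else — the two reductions above and the extraction of the transition cocycle — is routine.
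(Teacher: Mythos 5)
The paper does not prove this proposition: it is quoted directly from Iglesias-Zemmour's book as \cite[8.19]{I2}, so there is no internal argument to compare yours against. Your two reductions are correct and are the standard ones. For the ``if'' direction, every plot $U \ra Y$ locally factors through global plots via open balls diffeomorphic to $\R^m$, and since the $D$-topology on $U$ is the usual one, these balls give a $D$-open cover witnessing local triviality of $p^*f$. For the ``only if'' direction, $p : \R^n \ra Y$ is itself a plot, so $p^*f$ is locally trivial directly from the definition (your detour through $q = \id_{\R^n}$ is harmless), and the whole proposition collapses onto the lemma that a smooth map over $\R^n$ which is locally trivial of fiber type $F$ is trivial of fiber type $F$.

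That lemma is where all of the content lives, and you have identified it correctly but only sketched its proof. The strategy is the right one: extract the smooth $\Diff(F)$-valued transition cocycle (smoothness into $\Diff(F)$ does follow from cartesian closedness, as you say), and glue trivializations piece by piece, using cut-off reparametrizations of the transition functions in place of the convex averaging that is unavailable in $\Diff(F)$. Two points still need genuine work. First, the higher-dimensional induction must be organized so that each gluing takes place across a product-like overlap --- the slab-by-slab scheme of the classical covering-homotopy argument (show a locally trivial bundle over $B \times \R$ is pulled back from $B \times \{0\}$, then induct on coordinates) does this, whereas gluing an arbitrary arrangement of subcubes produces L-shaped overlaps where the one-dimensional argument does not directly apply. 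Second, $\R^n$ is not compact, so after trivializing over neighbourhoods of the cubes $[-N,N]^n$ you must correct the successive trivializations so that they stabilize on compact sets; this can be done by precomposing the comparison cocycle with a smooth self-map of $\R^n$ that is the identity on $[-N,N]^n$ and has image inside the overlap. Neither point is an obstruction, but as written the key lemma is an outline rather than a proof; for a self-contained account you should either carry out these details or, as the authors do for the proposition itself, cite the corresponding article of \cite{I2}.
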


\begin{ex}
Every smooth fiber bundle over a smooth manifold is a diffeological bundle.
\end{ex}

\begin{prop}[{\cite[8.15]{I2}}]\label{pr:diffgroup-diffbundle}
Let $G$ be a diffeological group, and let $H$ be a subgroup of $G$ with the sub-diffeology.
Then $G \ra G/H$ is a diffeological bundle of fiber type $H$,
where $G/H$ is the set of left (or right) cosets of $H$ in $G$, with the quotient diffeology.
\end{prop}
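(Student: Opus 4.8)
The plan is to verify the definition of a diffeological bundle directly. The map $\pi : G \to G/H$ is surjective, and it is smooth by the very definition of the quotient diffeology on $G/H$, so the only thing to check is that the pullback of $\pi$ along an arbitrary plot is locally trivial of fiber type $H$. Fix a plot $p : U \to G/H$ with $U$ open in some $\R^n$, and form
\[
  P \;=\; U \times_{G/H} G \;=\; \{ (u,g) \in U \times G \mid p(u) = \pi(g) \},
\]
equipped with the sub-diffeology from $U \times G$, together with the projection $\pi_p : P \to U$. I must produce a $D$-open cover of $U$ over which $\pi_p$ is trivial of fiber type $H$.

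The essential point is that, by the description of the quotient diffeology, the plot $p$ factors through $\pi$ locally: there is an open cover $\{U_i\}$ of $U$ and smooth maps $s_i : U_i \to G$ with $\pi \circ s_i = p|_{U_i}$. Since each $U_i$ is open in $\R^n$, it is $D$-open in $U$, so $\{U_i\}$ is a $D$-open cover. Over $U_i$ I would define
\[
  \Phi_i : H \times U_i \to \pi_p^{-1}(U_i), \qquad \Phi_i(h,u) = (u,\, s_i(u)\cdot h)
\]
for the case of left cosets (for right cosets one uses $(u,\, h \cdot s_i(u))$ instead). One checks that $\Phi_i$ takes values in $P$ because $\pi(s_i(u)h) = \pi(s_i(u)) = p(u)$; that it is smooth because multiplication in $G$, the maps $s_i$, and the inclusion $H \hookrightarrow G$ with its sub-diffeology are all smooth, and because a map into the sub-diffeological subspace $P \subseteq U \times G$ is smooth as soon as its composite with the inclusion is; and that it is a diffeomorphism, with inverse $(u,g) \mapsto (s_i(u)^{-1}g,\, u)$, which is well defined since $(u,g)\in\pi_p^{-1}(U_i)$ forces $gH = p(u) = s_i(u)H$, hence $s_i(u)^{-1}g \in H$, and which is smooth by the same bookkeeping with the sub-diffeologies. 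As $\Phi_i$ commutes with the projections onto $U_i$, it exhibits $\pi_p$ as trivial of fiber type $H$ over $U_i$, and hence $\pi_p$ is locally trivial of fiber type $H$. This proves the proposition.

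I do not anticipate a real obstacle: the argument is an unwinding of the definitions, and all the required smoothness assertions reduce, via the group axioms and cartesian closedness, to the compatibility of the quotient diffeology on $G/H$, the sub-diffeology on $H$, and the pullback sub-diffeology on $P$. The one subtlety worth flagging is that one should resist the temptation to trivialize $P$ globally over $U$: that would need a global smooth section of $p$ along $\pi$, which the quotient diffeology does not provide, whereas the definition of a diffeological bundle only asks for local triviality, and local lifts of $p$ are precisely what the quotient diffeology guarantees.
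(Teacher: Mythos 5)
Your proof is correct, and it is essentially the standard argument: the paper does not prove this proposition itself but cites~\cite[8.15]{I2}, and your verification---using the local lifts $s_i$ guaranteed by the quotient diffeology and translating by the group structure to trivialize the pullback over each $U_i$---is exactly the expected route, with the sub-/quotient-diffeology bookkeeping handled correctly. Your closing remark is also well taken: one cannot trivialize globally over $U$ (and indeed the bundle $T^2 \to T^2_\theta$ in Example~\ref{irrtorus} admits no global section), so invoking only local factorizations of the plot $p$ is the right move.
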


Note that we are \emph{not} requiring the subgroup $H$ to be closed.

\begin{thm}[{\cite[8.21]{I2}}]\label{LES}
Let $f:X \ra Y$ be a diffeological bundle of fiber type $F=f^{-1}(y)$
(equipped with the sub-diffeology from $X$) for some $y \in Y$.
Then for any $x \in F$, we have the following long exact sequence of smooth homotopy groups:
\[
\xymatrixcolsep{0.2in}
\xymatrix{\cdots \ar[r] & \pi_n^D(F,x) \ar[r]^{i_*} & \pi_n^D(X,x) \ar[r] & \pi_n^D(Y,y) \ar[r]
& \pi_{n-1}^D(F,x) \ar[r] & \cdots \ar[r] & \pi_0^D(Y) \ar[r] & 0.}
\]
\end{thm}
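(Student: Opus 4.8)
The plan is to reconstruct the classical long exact sequence of a fibration in the diffeological setting. The two ingredients are: a homotopy lifting property for diffeological bundles, which is particular to this situation, and a string of diagram-chases which go exactly as in the topological proof once suitable models for the smooth homotopy groups are fixed. Throughout I would use the $\eps$-fattened models $(I^n,\partial_\eps I^n)$ (equivalently $(\R^n,\partial_\eps\R^n)$) provided by Theorem~\ref{equidef}: representing a class by a smooth map that is \emph{constant}, equal to the basepoint, on a full-dimensional neighbourhood of the relevant faces is what makes the cut-off-function arguments below possible.

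First I would establish the homotopy lifting property: for every $n\ge 0$, every smooth $H\colon\R^n\times\R\to Y$, and every smooth $g\colon\R^n\to X$ with $f\circ g=H(\blank,0)$, there is a smooth $\widetilde H\colon\R^n\times\R\to X$ with $f\circ\widetilde H=H$ and $\widetilde H(\blank,0)=g$. This is immediate from Proposition~\ref{pr:global-plot}: since $\R^n\times\R\cong\R^{n+1}$ is a global plot of $Y$, the pullback $\widetilde f\colon E\to\R^{n+1}$ of $f$ along $H$ is trivial of fibre type $F$, so fix a diffeomorphism $E\cong F\times\R^{n+1}$ over $\R^{n+1}$. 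The map $g$ corresponds, via the universal property of the pullback, to a section of $\widetilde f$ over $\R^n\times\{0\}$, i.e.\ to a smooth map $\R^n\to F$ in the trivialization; precomposing with the projection $\R^{n+1}\to\R^n$ extends it to a global section of $\widetilde f$, and pushing that section forward to $X$ gives $\widetilde H$. I would also record the \emph{relative} form needed below: if in addition $g$ is constant equal to some $x\in F$, and $H$ is constant equal to $y$, on a set of the form $\partial_{\eps'}\R^n\times\R$ with $\eps'>\eps$, then $\widetilde H$ can be taken constant equal to $x$ on $\partial_\eps\R^n\times\R$. This uses that over the region where $H\equiv y$ the bundle is canonically trivialized (it is pulled back along a constant map), so one prescribes the section there to be the canonical one and extends using a cut-off function; this is the one place where a small amount of genuine work is needed.

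With the lifting property in hand, I would define the connecting map $\partial\colon\pi_n^D(Y,y)\to\pi_{n-1}^D(F,x)$ by representing a class by $h\colon(I^n,\partial I^n)\to(Y,y)$, writing $I^n=I^{n-1}\times I$, lifting $h$ (by the relative property, starting from the constant lift $x$ on $I^{n-1}\times\{0\}\cup\partial I^{n-1}\times I$) to a smooth $\widetilde h\colon I^n\to X$ that is constant equal to $x$ there, and setting $\partial[h]=[\,\widetilde h|_{I^{n-1}\times\{1\}}\,]$; the restriction lands in $F$ because $h\equiv y$ on $I^{n-1}\times\{1\}$, and is constant $x$ on $\partial I^{n-1}$. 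Then well-definedness and exactness at $\pi_n^D(X,x)$, at $\pi_n^D(Y,y)$, and at $\pi_{n-1}^D(F,x)$ are the usual chases, each of which promotes a smooth homotopy downstairs to one upstairs respecting the face conditions via the relative lifting property. An equivalent, more structured route is to introduce relative smooth homotopy groups $\pi_n^D(X,F,x)$, note the (formal) long exact sequence of the pair $(X,F)$, and use the lifting property to prove that $f_*\colon\pi_n^D(X,F,x)\to\pi_n^D(Y,y)$ is a bijection for $n\ge1$; splicing then produces the sequence, with $\partial$ the boundary map of the pair transported along this bijection.

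The low-degree end is handled separately but routinely: surjectivity of $\pi_0^D(X)\to\pi_0^D(Y)$ follows from surjectivity of $f$ together with the $n=0$ case of the lifting property (lift a path in $Y$ starting at a point of the image), and exactness at $\pi_0^D(X,x)$ and at $\pi_0^D(F,x)$ is the same bookkeeping as in the topological proof, keeping in mind that in this range the maps are only maps of pointed sets. The main obstacle, as flagged above, is the relative homotopy lifting property: topologically one invokes the cofibration $\bigl(I^{n-1}\times\{0\}\cup\partial I^{n-1}\times I\bigr)\hookrightarrow I^n$ and a retraction, but no smooth retraction exists because of the corner, so instead I would rely on the fattened stationary models of Theorem~\ref{equidef}, for which the pulled-back bundle is canonically trivial over a neighbourhood of the constrained faces and the fibre coordinate of the lift can be taken locally constant there by means of a cut-off function. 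Once that lemma is in place, everything else is formal.
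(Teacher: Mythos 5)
The paper does not prove this theorem; it is quoted from Iglesias-Zemmour's book (\cite[8.21]{I2}), so there is no in-paper argument to compare against. Your reconstruction is sound and follows what is essentially the standard route, with the correct diffeological input: the absolute homotopy lifting property over $\R^n\times\R$ really is immediate from Proposition~\ref{pr:global-plot}, since the pullback along the global plot $H$ is trivial and a lift is just a section of $F\times\R^{n+1}\to\R^{n+1}$ extending the one determined by $g$. You also correctly isolate the only non-formal step, the \emph{relative} lifting lemma. One word of caution there: the phrase ``extends using a cut-off function'' hides the fact that $F$ is a bare diffeological space with no linear structure, so you cannot average two sections. What works is to compare the chosen global trivialization of the pullback with the canonical one over the region where $H\equiv y$ (they differ by a smooth family of diffeomorphisms of $F$), and then interpolate by \emph{reparametrizing} the homotopy variable: on the fattened boundary region take the section $(u,t)\mapsto s_1(u,\lambda(u)t)$, where $s_1$ is the prescribed boundary section and $\lambda$ is a smooth bump equal to $1$ on $\partial_\eps\R^n$ and supported in the interior of $\partial_{\eps'}\R^n$; this glues smoothly with the projection-extended section elsewhere because $s_1(u,0)$ agrees with the $F$-coordinate of $g(u)$ on the overlap. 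This is exactly why the $\eps$-fattened models of Theorem~\ref{equidef} are the right ones to use, as you note. With that lemma made precise, the connecting map, the diagram chases, and the low-degree end are routine, and the argument is complete.
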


\begin{ex}[{\cite[8.38]{I2}}]\label{irrtorus}
Let $T^2=\R^2/\Z^2$ be the usual $2$-torus,
and let $\R_\theta$ be the image of the line $\{ y = \theta x \}$
under the quotient map $\R^2 \ra T^2$, with $\theta$ a fixed irrational number.
Note that $T^2$ is an abelian Lie group,
and $\R_\theta$ is a subgroup which is diffeomorphic to $\R$.
The quotient group $T^2_\theta := T^2/\R_\theta$ with the quotient diffeology
is called the \dfn{irrational torus of slope $\theta$}, and
by Proposition~\ref{pr:diffgroup-diffbundle},
the quotient map $T^2 \ra T^2_\theta$ is a diffeological bundle of fiber type $\R_\theta$.
By Theorem~\ref{LES},
$\pi^D_1(T^2_\theta) \cong \pi^D_1(T^2) \cong \Z \oplus \Z$.
But as a topological space with the $D$-topology, $\pi_1(T^2_\theta) \cong 0$,
since the $D$-topology on $T^2_\theta$ is indiscrete.
This follows from the fact that the $D$-topology of a quotient diffeological space
coincides with the quotient topology of the $D$-topology of the original space,
since the functor $D:\Diff \ra \Top$ is a left adjoint.

Note also that the diffeological bundle $T^2 \ra T^2_\theta$
is not locally trivial, since this would imply that it is trivial.
However, any smooth section $T^2_\theta \ra T^2$ would be induced
by a smooth map $T^2 \to T^2$ which is constant
on the dense subspace $\R_{\theta}$.  Thus it would be constant,
and could not be a section.
\end{ex}

Note that the irrational tori are trivial in approaches
to generalizing smooth manifolds which are based on ``mapping out''
rather than ``mapping in''.
(See~\cite{St} for a comparison between different approaches.)

\begin{rem}
Example~\ref{irrtorus} shows that the smooth homotopy groups of a diffeological space $X$
have more information than the usual (continuous) homotopy groups of $D(X)$, and more
generally that $X$ contains more information than $D(X)$.
We would like our homotopy theory to encode this information.
Hence, we will not use the functor $D:\Diff \ra \Top$ to define weak
equivalences in $\Diff$.
Instead, we will define an adjoint pair $|\blank|_D:\sSet \rightleftharpoons \Diff:S^D$
in the coming section, and we will use the functor $S^D$ to define the
weak equivalences in $\Diff$, as we will show that in good cases it
retains the information about the smooth homotopy groups.
\end{rem}

\section{The homotopy theory of diffeological spaces}\label{se:model}

In this section, we define the smooth singular simplicial set $S^D(X)$
associated to a diffeological space $X$, and also study the diffeological
realization functor which is left adjoint to $S^D$.
It is well-known that the singular simplicial set associated to a
topological space captures homotopical information about the space,
and one of our main results is that the same is true in the diffeological
setting, when $S^D(X)$ is a fibrant simplicial set.
Motivated by this, we define a diffeological space $X$ to be fibrant
when $S^D(X)$ is fibrant, and more generally define fibrations, cofibrations
and weak equivalences of diffeological spaces using this adjoint pair.
Although we don't know whether the definitions we give satisfy the axioms of a model category,
we prove that a wide variety of diffeological spaces, including smooth manifolds, are fibrant,
which shows that the above result is broadly applicable.
We also prove that our fibrations are closely related to diffeological bundles,
which shows that our definitions recover the usual smooth homotopy theory of
smooth manifolds as well as past work on the smooth homotopy theory of diffeological bundles.
Along the way, we study the cofibrant diffeological spaces, and conjecture that
every smooth manifold is cofibrant.

\subsection{Diffeological realization and the smooth singular simplicial set}\label{ss:adjoint-pair}

In this subsection, we use an adjoint pair between simplicial sets and diffeological
spaces to define the concepts of cofibration, fibration and weak equivalence, and
prove some basic properties.
We then prove one of our main results, which says that the smooth
homotopy groups of a fibrant diffeological space coincide with the
simplicial homotopy groups of its smooth singular simplicial set.
We conclude with some properties of the diffeological realization
functor and the smooth singular functor.

Here is a general theorem from~\cite{Mac}:

\begin{thm}
Given a small category $\mathcal{C}$,
a cocomplete category $\mathcal{D}$,
and a functor $F:\mathcal{C} \ra \mathcal{D}$,
there is an adjoint pair $L:\Pre(\mathcal{C}) \rightleftharpoons \mathcal{D}:R$
with $R(d)(c)=\mathcal{D}(F(c),d)$
and $L(X)= \colim_{\mathcal{C}(\blank,c) \ra X} F(c)$,
where $c$ is an object in $\mathcal{C}$, $d$ is an object in $\mathcal{D}$
and $X$ is a presheaf on $\mathcal{C}$.
\end{thm}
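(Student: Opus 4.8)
Given a small category $\mathcal{C}$, a cocomplete category $\mathcal{D}$, and a functor $F : \mathcal{C} \to \mathcal{D}$, there is an adjoint pair $L : \Pre(\mathcal{C}) \rightleftharpoons \mathcal{D} : R$ with $R(d)(c) = \mathcal{D}(F(c), d)$ and $L(X) = \colim_{\mathcal{C}(\blank, c) \to X} F(c)$.

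**The plan.** This is the classical construction of the left Kan extension of $F$ along the Yoneda embedding, together with the observation that this left Kan extension is a genuine left adjoint because $\mathcal{D}$ is cocomplete. Since the statement is quoted from Mac Lane, I will not reprove it from scratch; instead I will indicate the shape of the argument. First I would define $R$ on objects by the stated formula $R(d)(c) = \mathcal{D}(F(c), d)$, and on morphisms by precomposition with $F$: a morphism $c' \to c$ in $\mathcal{C}$ induces $F(c') \to F(c)$ and hence $\mathcal{D}(F(c), d) \to \mathcal{D}(F(c'), d)$, making $R(d)$ a presheaf, and a morphism $d \to d'$ in $\mathcal{D}$ induces a natural transformation $R(d) \to R(d')$, so that $R : \mathcal{D} \to \Pre(\mathcal{C})$ is a functor. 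Then I would define $L$ on a presheaf $X$ by the colimit over the category of elements of $X$ (equivalently, over the comma category of representables mapping to $X$), $L(X) = \colim_{\mathcal{C}(\blank,c)\to X} F(c)$; this exists because $\mathcal{D}$ is cocomplete and the indexing category is small (as $\mathcal{C}$ is small and $X$ takes values in sets). Functoriality of $L$ follows from the functoriality of the colimit in the diagram.

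**Establishing the adjunction.** The key step is to produce a natural bijection
\[
  \mathcal{D}(L(X), d) \;\cong\; \Pre(\mathcal{C})(X, R(d)).
\]
The cleanest route is: by definition of $L(X)$ as a colimit, $\mathcal{D}(L(X), d) \cong \lim_{\mathcal{C}(\blank,c)\to X} \mathcal{D}(F(c), d) = \lim_{\mathcal{C}(\blank,c)\to X} R(d)(c)$, where the limit is over the (opposite of the) category of elements of $X$. On the other hand, the co-Yoneda lemma expresses $X$ itself as the colimit $\colim_{\mathcal{C}(\blank,c)\to X} \mathcal{C}(\blank, c)$ of representables over the same indexing category, so $\Pre(\mathcal{C})(X, R(d)) \cong \lim_{\mathcal{C}(\blank,c)\to X} \Pre(\mathcal{C})(\mathcal{C}(\blank,c), R(d))$, and by the Yoneda lemma each term is $R(d)(c) = \mathcal{D}(F(c),d)$. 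Comparing the two limits, which are over the same small category and term-by-term identified, gives the desired bijection, and one checks that the identifications are natural in both $X$ and $d$ by chasing the Yoneda and co-Yoneda isomorphisms.

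**Anticipated obstacle.** There is no deep obstacle here — the result is standard and the reference does the work. The only point requiring care is the bookkeeping around the category of elements (equivalently, the comma category $\mathcal{C} \downarrow X$ of representables over $X$): one must check that the indexing category used to define $L(X)$ as a colimit is literally the same as the one appearing in the co-Yoneda presentation of $X$, so that the two limit computations above match up on the nose, and that all the comparison maps are natural. Since the statement is merely quoted and immediately applied (with $\mathcal{C} = \Delta$, $\mathcal{D} = \Diff$, and $F$ the functor $[n] \mapsto \A^n$), a reasonable treatment is simply to cite~\cite{Mac} and remark that this is the left Kan extension of $F$ along the Yoneda embedding, which is a left adjoint precisely because $\mathcal{D}$ is cocomplete.
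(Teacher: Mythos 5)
Your proposal is correct and is the standard nerve--realization (left Kan extension along the Yoneda embedding) argument; the paper itself gives no proof, simply citing Mac\,Lane, and the argument you sketch --- co-Yoneda presentation of $X$ plus the universal property of the colimit defining $L(X)$ --- is exactly the proof found in that reference. Nothing further is needed.
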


If we take $\mathcal{C}$ to be the simplicial category $\Delta$,
then the above theorem says that,
given a cosimplicial object in a cocomplete category $\mathcal{D}$
(that is, a functor $\Delta \ra \mathcal{D}$),
we get an adjoint pair $\sSet \rightleftharpoons \mathcal{D}$.

\begin{ex}\label{ex:adjoint-top}
If we take $F$ to be the functor $\Delta \ra \Top$
sending $\underline{n}$ to
$|\Delta^n|=\{(x_0,x_1,\ldots,x_n) \in \R^{n+1} \mid \sum x_i=1
\text{ and } x_i \geq 0
\text{ for each } i\}$ with the sub-topology from $\R^{n+1}$,
then we get the usual adjoint pair $|\blank|:\sSet \rightleftharpoons \Top:s$.
\end{ex}

\begin{de}\label{adjoint}
We write $\mathbb{A}^n=\{(x_0,x_1,\ldots,x_n) \in \R^{n+1} \mid \sum x_i=1\}$
with the sub-diffeology from $\R^{n+1}$.
It is diffeomorphic to $\R^n$, by forgetting the first coordinate, for example.
Just like the standard cosimplicial object in $\Top$,
$\mathbb{A}^\bullet$ is a cosimplicial object in $\Diff$.
Hence, we get an adjoint pair $|\blank|_D:\sSet \rightleftharpoons \Diff:S^D$.
We call $|\blank|_D$ the \dfn{diffeological realization functor}
and $S^D$ the \dfn{smooth singular functor}.
\end{de}

More precisely, by the above theorem, we know that
$S^D_n(X) = C^\infty(\mathbb{A}^n,X) \cong C^\infty(\R^n,X)$
and $|A|_D = \colim_{\Delta^n \ra A} \A^n$.
As is usual for geometric realizations, the latter can be
described more concretely.
Let $\sim$ be the equivalence relation on $\coprod_{n \in \N} \A^n \times A_n$
generated by $\A^n \times A_n \ni (a,x) \sim (a',x') \in \A^m \times A_m$
if there is a morphism $f:\underline{n} \ra \underline{m}$ in $\Delta$
such that $f_*(a)=a'$ and $f^*(x')=x$,
where $f_*:\A^n \ra \A^m$ and $f^*:A_m \ra A_n$ are induced from $f$.
Then $|A|_D=(\coprod_{n \in \N} \R^n \times A_n) / {\sim}$, with the quotient diffeology.

\begin{rem}
Instead of using these ``non-compact'' simplices $\A^n$, one could
use the obvious compact versions, with $0 \leq x_i \leq 1$ for each $i$.
In~\cite[Section 5]{He}, Hector defined singular and
geometric realization functors using the compact simplices.
We use the non-compact versions because the smooth maps from $\A^n$ to
a diffeological space $X$ are simply plots.
Moreover, with compact simplices, we don't know whether many of our
results would continue to hold.
\end{rem}

We can describe some important diffeological realizations explicitly.
The horn $\Lambda_k^n$ is the sub-simplicial set of $\Delta^n$
which omits the $n$-simplex and its $k^{th}$ face.
This is the coequalizer of its other $(n-1)$-dimensional faces
along their $(n-2)$-dimensional intersections.
Since diffeological realization is a left adjoint,
$|\Lambda^n_k|_D$ is the coequalizer of $n$ copies of $\A^{n-1}$
along $\binom{n}{2}$ copies of $\A^{n-2}$.
It is easy to see that all of the $|\Lambda^n_k|_D$'s are diffeomorphic to
$\Lambda^n:=\{(x_1,\ldots,x_n) \in \R^n \mid x_i = 0 \text{ for some } i \}$,
viewed as the coequalizer of the coordinate hyperplanes along their intersections,
with the coequalizer diffeology.

Similarly, the boundary $\partial \Delta^n$ can be described as a coequalizer,
and is diffeomorphic to
$\partial' \R^n := \{(x_1,\ldots,x_n) \in \R^n \mid x_i = 0
\text{ for some } i \text{ or } \sum x_i = 1 \}
= \Lambda^n \cup \{(x_1,\ldots,x_n) \in \R^n \mid \sum x_i = 1 \}$,
where both are equipped with the coequalizer diffeology.

\begin{rem}\label{coeqvssub}
Note that $\Lambda^n$ and $\partial' \R^n$ are not diffeological subspaces of $\R^n$ when $n \geq 2$.
Write $\Lambda^n_{sub}$ and $\partial' \R^n_{sub}$ for
the diffeological subspaces of $\R^n$ with the same underlying sets
as $\Lambda^n$ and $\partial' \R^n$, respectively.
Then we have smooth maps $\Lambda^n \ra \Lambda^n_{sub}$ and $\partial' \R^n \ra \partial' \R^n_{sub}$
which are both identity maps on the underlying sets.
\end{rem}

\begin{de}\label{LLP-RLP}
If $i : A \ra B$ and $f : X \ra Y$ are morphisms in a category such that
for every solid commutative square
\[
  \xymatrix{
    A \ar[r] \ar[d]_i     & X \ar[d]^f \\
    B \ar[r] \ar@{-->}[ur] & Y
  }
\]
a dotted morphism exists making the triangles commute, then we say that
$i$ has the \dfn{left lifting property} with respect to $f$ and that
$f$ has the \dfn{right lifting property} with respect to $i$.
\end{de}

\begin{de}\label{sSet-wfc}
A map $f : X \ra Y$ of simplicial sets is a
\dfn{weak equivalence} if its geometric realization is a homotopy equivalence in $\Top$,
a \dfn{cofibration} if each $X_n \to Y_n$ is a monomorphism, and
a \dfn{(Kan) fibration} if it has the right lifting property with
respect to the inclusions $\Lambda^n_k \hookrightarrow \Delta^n$.
\end{de}

It is well-known~\cite{GJ,Q} that these definitions give a
cofibrantly generated proper model structure on $\sSet$.

\begin{de}\label{wfc}
We call a morphism $X \ra Y$ in $\Diff$ a \dfn{weak equivalence}
(resp.\ \dfn{fibration})
if $S^D(X) \ra S^D(Y)$ is a weak equivalence (resp.\ fibration) in $\sSet$.
We call a morphism $X \ra Y$ in $\Diff$ a \dfn{cofibration}
if it has the left lifting property with respect to all maps which
are both weak equivalences and fibrations.
\end{de}

\begin{rem}
The above definition is motivated by the standard model structure
on $\Top$, for which a map $f$ is a weak equivalence (resp.\ fibration)
if and only if $s(f)$ is a weak equivalence (resp.\ fibration) in $\sSet$,
and for which the cofibrations are determined by a left lifting property.

More generally, given an adjoint pair $F : \cM \leftrightharpoons \cN : U$,
where $\cM$ is a cofibrantly generated model category and $\cN$ is a
complete and cocomplete category, one can often ``pull back'' the model
structure on $\cM$ along the right adjoint $U$ in the analogous
way using Kan's lifting lemma~\cite[Theorem~11.3.2]{Hi}.
There are conditions that must be checked, and in our situation we
are unable to verify condition (2) of the cited theorem.
In particular, we do not know how to show that the pushout of a
map which is both a cofibration and a weak equivalence is a weak equivalence.
Nevertheless, we will show that our definitions can be used to study the
homotopy theory of diffeological spaces and in particular that they capture important
properties of the smooth singular simplicial set of a diffeological space.

In~\cite[Section 5]{He}, Hector defined the notion of ``Kan fibration'',
which is the analog of our notion of fibration, but using compact simplices.
We are not sure whether either notion implies the other.
\end{rem}

In any of the above categories, a map is a \dfn{trivial (co)fibration} if
it is both a weak equivalence and a (co)fibration.
An object is \dfn{cofibrant} if the unique map from the empty object is a cofibration,
and is \dfn{fibrant} if the unique map to a point is a fibration.
Thus a diffeological space is fibrant if and only if $S^D(X)$ is fibrant,
which is also known as being a Kan complex.
We will see that for fibrant diffeological spaces, $S^D(X)$ captures
the smooth homotopical information of $X$.
In order to prove this,
we use the following lemma connecting smooth homotopy and simplicial homotopy:

\begin{lem}\label{shtvssimhtp}
The functor $S^D:\Diff \ra \sSet$ sends smoothly homotopic maps to simplicially homotopic maps.
\end{lem}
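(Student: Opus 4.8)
The plan is to reduce the statement to a direct translation between the two notions of homotopy via the cartesian closedness of $\Diff$ together with the adjunction defining $S^D$. Suppose $f, g : X \to Y$ are smoothly homotopic, so there is a smooth map $H : X \times \R \to Y$ with $H(-, 0) = f$ and $H(-, 1) = g$. Simplicial homotopy of maps into a general (not necessarily fibrant) target is a bit delicate, so I would first replace $\R$ by the diffeological interval $I = [0,1]$ using cut-off functions (as in the discussion of stationary paths earlier in the excerpt), so that the homotopy is given by a smooth $H : X \times I \to Y$. The natural thing to aim for is a simplicial homotopy $S^D(X) \times \Delta^1 \to S^D(Y)$ from $S^D(f)$ to $S^D(g)$, and the key geometric input will be a suitable smooth map $\A^n \times |\Delta^1|_D \to \A^n \times I$ or, better, a factorization of the required data through the adjunction.

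The cleanest route is to use the adjunction $|\blank|_D \dashv S^D$ directly: giving a simplicial homotopy $S^D(X) \times \Delta^1 \to S^D(Y)$ is the same, by adjointness, as giving a smooth map $|S^D(X) \times \Delta^1|_D \to Y$. Since $|\blank|_D$ is a left adjoint it preserves colimits but \emph{not} products (Proposition~\ref{noncomm}), so $|S^D(X) \times \Delta^1|_D$ is not simply $|S^D(X)|_D \times |\Delta^1|_D$; however, there is always a natural comparison map $|S^D(X) \times \Delta^1|_D \to |S^D(X)|_D \times |\Delta^1|_D$, and there is the counit $\varepsilon : |S^D(X)|_D \to X$. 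Composing, it suffices to produce a smooth map $|S^D(X)|_D \times |\Delta^1|_D \to Y$ extending $f$ and $g$ on the two ends; and for that I can take the composite $|S^D(X)|_D \times |\Delta^1|_D \xrightarrow{\varepsilon \times j} X \times I \xrightarrow{H} Y$, where $j : |\Delta^1|_D \to I$ is the evident smooth map (an affine reparametrization of $\A^1 \cong \R$ onto $[0,1]$ composed with a cut-off if one wants stationarity at the vertices). Unwinding, a simplicial homotopy is then determined by specifying, for each $n$-simplex $\sigma : \A^n \to X$ of $S^D(X)$ and each of the finitely many nondegenerate $(n{+}1)$-simplices of $\Delta^n \times \Delta^1$, a corresponding smooth map $\A^{n+1} \to Y$; these can be written down explicitly as $H \circ (\sigma \times \mathrm{id}) \circ (\text{linear map } \A^{n+1} \to \A^n \times I)$, using the standard shuffle decomposition of the prism $\Delta^n \times \Delta^1$.

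So the concrete proof I would write is: first record the linear ``prism'' maps $P_i : \A^{n+1} \to \A^n \times \R$ coming from the $n{+}1$ shuffles of $\Delta^n\times\Delta^1$ (these are precisely the affine maps classically used for the simplicial chain/prism operator), check they are smooth and assemble into the required face-compatible family, then define the homotopy $S^D(X)\times\Delta^1 \to S^D(Y)$ on an $n$-simplex $(\sigma, \text{shuffle } i)$ by $H \circ (\sigma \times \mathrm{id}_\R) \circ P_i$ — after first arranging $H$ to be defined on $X \times \R$ and stationary near $t=0,1$, which guarantees the two ends literally give $S^D(f)$ and $S^D(g)$ and that everything glues. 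The main obstacle is purely bookkeeping: verifying that this assignment respects all the simplicial identities of $S^D(X)\times\Delta^1$, i.e., that the prism maps $P_i$ satisfy the same face/degeneracy compatibilities as the corresponding simplices of $\Delta^n\times\Delta^1$. This is the standard combinatorics of the prism operator and is routine — and one can even sidestep most of it by invoking the adjunction argument above, reducing the whole verification to the single smooth map $H \circ (\varepsilon \times j)$ out of a product of realizations, at the cost of keeping careful track of the non-commutativity of $|\blank|_D$ with products (which is harmless here because we only need a \emph{map into}, not an isomorphism).
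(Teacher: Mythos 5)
Your proposal is correct, and its ``adjunction shortcut'' at the end is essentially the right construction, but you approach it from the harder side of the adjunction. The paper's proof works entirely on the right-adjoint side: since $S^D$ is a right adjoint it \emph{preserves products}, so $S^D(X\times\R)\cong S^D(X)\times S^D(\R)$, and the simplicial homotopy is simply the composite
\[
S^D(X)\times\Delta^1 \xrightarrow{\,1\times\nu\,} S^D(X)\times S^D(\R)\cong S^D(X\times\R)\xrightarrow{\,S^D(H)\,} S^D(Y),
\]
where $\nu\colon\Delta^1\to S^D(\R)$ is the $1$-simplex given by the projection $\A^1\to\R$ (which already sends the two vertices to $0$ and $1$, so no cut-off function or replacement of $\R$ by $I$ is needed, and no fibrancy of the target is used since one only needs a single homotopy, not an equivalence relation). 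By contrast, you pass to the left adjoint, where $|\blank|_D$ fails to preserve products, forcing you to introduce the comparison map $|S^D(X)\times\Delta^1|_D\to|S^D(X)|_D\times|\Delta^1|_D$, the counit, and (in your first plan) the shuffle/prism decomposition of $\Delta^n\times\Delta^1$. All of that is correct --- a map \emph{into} a product is all you need, as you observe --- and unwinding your composite gives the same formula $a\mapsto H(\sigma(a),\,j(\tau_*(a)))$ as the paper's; but the product-preservation of the right adjoint eliminates every piece of bookkeeping you flag as the ``main obstacle,'' reducing the proof to a two-line diagram chase.
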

\begin{proof}
Suppose that $f,g:X \ra Y$ are smoothly homotopic, so that
we have the following commutative diagram in $\Diff$:
\[
\xymatrix{X \times \{0\} \ar[d] \ar[dr]^f \\ X \times \R \ar[r] & Y \\ X \times \{1\}. \ar[u] \ar[ur]_g}
\]
Since $S^D$ is a right adjoint,
we have the following commutative diagram in $\sSet$:
\[
\xymatrix{& S^D(X) \times \Delta^0 \ar[dl] \ar[d] \ar[dr]^{S^D(f)} \\
S^D(X) \times \Delta^1 \ar[r]^-{1 \times \nu}
 & S^D(X) \times S^D (\R) \ar[r] & S^D(Y) \\
 & S^D(X) \times \Delta^0, \ar[ul] \ar[u] \ar[ur]_{S^D(g)}}
\]
where $\nu$ corresponds to the projection $\A^1 \ra \R$ onto the second coordinate.
Thus $S^D(f)$ and $S^D(g)$ are simplicially homotopic.
\end{proof}

We call a diffeological space $X$ \dfn{smoothly contractible},
if the identity map $X \ra X$ is smoothly homotopic to a constant map $X \ra X$.
Therefore, if a diffeological space $X$ is smoothly contractible,
then $X \ra \R^0$ is a weak equivalence.
For example, since both $\Lambda^n$ and $\Lambda^n_{sub}$ are linearly
contractible to the origin, the map $\Lambda^n \ra \Lambda^n_{sub}$
introduced in Remark~\ref{coeqvssub} is a weak equivalence.

Here is an important property of fibrant diffeological spaces:

\begin{thm}\label{fib-hg-compare}
Let $(X,x)$ be a pointed diffeological space with $X$ fibrant.
Then there is a natural isomorphism $\pi_n^D(X,x) \cong \pi_n^s(S^D(X),x)$.
\end{thm}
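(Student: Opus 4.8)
The plan is to realize both groups as quotients of the \emph{same} set of representatives and then match up the two equivalence relations. By Theorem~\ref{equidef}(\ref{item:5}), $\pi_n^D(X,x)$ is naturally the set $\pi_0^D\big(C^\infty((\A^n,\partial\A^n),(X,x))\big)$, i.e.\ the set of smooth maps $\sigma\colon\A^n\to X$ with $\sigma(\partial\A^n)=\{x\}$ modulo smooth homotopy rel $\partial\A^n$; by cartesian closedness (Theorem~\ref{th:cartesian-closed}) such a homotopy is a smooth map $\A^n\times\R\to X$ constant at $x$ on $\partial\A^n\times\R$, with $\sigma$ at $t=0$ and $\sigma'$ at $t=1$. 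On the simplicial side, $S^D_n(X)=C^\infty(\A^n,X)$ and the face operator $d_i$ is precomposition with the coface $\delta^i\colon\A^{n-1}\hookrightarrow\A^n$, whose image is $\{x_i=0\}\cap\A^n$; hence an $n$-simplex is a cycle based at the vertex $x$ exactly when it is constant at $x$ on $\bigcup_i(\{x_i=0\}\cap\A^n)=\partial\A^n$. So the underlying set of $n$-cycles of $S^D(X)$ based at $x$ \emph{is} the underlying set of $C^\infty((\A^n,\partial\A^n),(X,x))$. Since $X$ is fibrant, $S^D(X)$ is a Kan complex, so by the standard description $\pi_n^s(S^D(X),x)$ is this same set modulo simplicial homotopy of cycles; it therefore remains only to compare the two relations.

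For the easier inclusion of relations I would argue as follows. If cycles $\sigma,\sigma'$ are simplicially homotopic, then --- $S^D(X)$ being Kan --- there is an $(n+1)$-simplex $H\colon\A^{n+1}\to X$ in the standard form of a homotopy: $H$ restricts to $\sigma$ and to $\sigma'$ on two consecutive facets and is constant at $x$ on the remaining ones. Interpolating affinely between the two corresponding cofaces defines a smooth ``prism'' map $\Phi\colon\A^n\times\R\to\A^{n+1}$, and $G:=H\circ\Phi$ satisfies $G(\cdot,0)=\sigma$, $G(\cdot,1)=\sigma'$. The one thing to check is that $\Phi$ carries $\partial\A^n\times\R$ into the union of facets of $\A^{n+1}$ on which $H$ is constant at $x$; this is a short coordinate computation (the only stratum of $\partial\A^n$ needing care being one on which the two cofaces already coincide). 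Hence $G$ is a smooth homotopy rel $\partial\A^n$ from $\sigma$ to $\sigma'$, so they agree in $\pi_n^D(X,x)$.

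For the other inclusion --- smoothly homotopic (rel $\partial\A^n$) cycles are simplicially homotopic --- I would imitate the proof of Lemma~\ref{shtvssimhtp}. Given a smooth homotopy $G\colon\A^n\times\R\to X$ with $G(\cdot,0)=\sigma$, $G(\cdot,1)=\sigma'$ and $G$ constant at $x$ on $\partial\A^n\times\R$, precompose with the canonical map $|\Delta^n\times\Delta^1|_D\to|\Delta^n|_D\times|\Delta^1|_D=\A^n\times\A^1\cong\A^n\times\R$ induced by the two projections, and transpose across the adjunction $|\blank|_D\dashv S^D$ of Definition~\ref{adjoint}; this yields a simplicial homotopy $\Delta^n\times\Delta^1\to S^D(X)$. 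Its ends at the two vertices of $\Delta^1$ are the simplices $\sigma$ and $\sigma'$, and because $G$ is constant at $x$ on $\partial\A^n\times\R$ the homotopy is constant at $x$ on $\partial\Delta^n\times\Delta^1$, i.e.\ it is a homotopy of cycles; so $\sigma$ and $\sigma'$ agree in $\pi_n^s(S^D(X),x)$. Crucially this direction needs only the \emph{one} canonical comparison map, which is essential since by Proposition~\ref{noncomm} diffeological realization does not preserve products, so $|\Delta^n\times\Delta^1|_D$ is \emph{not} $\A^n\times\R$.

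Combining the two inclusions, the identity on representatives descends to a bijection $\pi_n^D(X,x)\cong\pi_n^s(S^D(X),x)$; naturality in $(X,x)$ is immediate from the construction, and for $n\geq1$ one checks routinely --- using the pinch maps on $\A^n/\partial\A^n$ and $\Delta^n/\partial\Delta^n$, which correspond under the comparison --- that it is a group isomorphism. I expect the main obstacle to be the second inclusion: extracting a genuinely simplicial homotopy from a smooth one while living with the failure of $|\blank|_D$ to commute with products. The fibrancy hypothesis is also used essentially, both there (to identify $\pi_n^s$ with cycles modulo homotopy) and in the first inclusion (to produce the $(n+1)$-simplex $H$).
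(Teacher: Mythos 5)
Your proposal is correct and follows essentially the same route as the paper's proof: both identify representatives of each group with maps of pairs $(\A^n,\partial\A^n)\to(X,x)$ via Theorem~\ref{equidef}(\ref{item:5}), use the adjunction comparison of Lemma~\ref{shtvssimhtp} to pass from smooth to simplicial homotopies, and use the Kan condition together with an affine ``prism'' map $\A^n\times\R\to\A^{n+1}$ interpolating between the last two cofaces (the paper's $\beta(x_0,\ldots,x_n,t)=(x_0,\ldots,x_{n-1},tx_n,(1-t)x_n)$) for the converse. The only difference is cosmetic: the paper verifies the group homomorphism property by an explicit formula for a simplex $h$ with $d_{n-1}h=f$, $d_{n+1}h=g$, where you appeal to pinch maps, but the substance is the same.
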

\begin{proof}
Since $X$ is a fibrant diffeological space,
$S^D(X)$ is a Kan complex.

For $n=0$, the result is straightforward, since
$\pi_0^s(S^D(X))$ can be described as the coequalizer of
\[
\xymatrix{(S^D(X))_1 \ar@< 2pt>[r]^{d_0} \ar@<-2pt>[r]_{d_1} & (S^D(X))_0.}
\]
Therefore, $\pi_0^D(X) \cong \pi_0^s(S^D(X))$.

For $n \geq 1$,
the $n^{th}$ simplicial homotopy group $\pi_n^s(S^D(X),x)$ of $(S^D(X),x)$ is defined
to be the set of simplicial homotopy classes of maps of pairs
$(\Delta^n, \partial \Delta^n) \to (S^D(X), x)$.

In~\eqref{item:5} of Theorem~\ref{equidef}, we proved that $\pi_n^D(X,x)$ bijects
with $[(\mathbb{A}^n,\partial \mathbb{A}^n),(X,x)]$.
We define
\[
\alpha: [(\mathbb{A}^n,\partial \mathbb{A}^n),(X,x)] \ra \pi_n^s(S^D(X),x)
\]
by $\alpha([f])=[f]$, where an $n$-simplex of $S^D(X)$ is identified with
the corresponding map $\Delta^n \to S^D(X)$.
The map $\alpha$ is well-defined by Lemma~\ref{shtvssimhtp},
and it is clear that $\alpha$ is surjective.

We now show that $\alpha$ is injective.
Let $[f],[g] \in [(\mathbb{A}^n,\partial \mathbb{A}^n),(X,x)]$ be such that $\alpha([f])=\alpha([g])$.
Since $S^D(X)$ is a Kan complex,
there exists $F \in C^\infty(\mathbb{A}^{n+1},X)$ such that
$F(x_0,\ldots,x_{n-1},0,x_{n+1})=f(x_0,\ldots,x_{n-1},x_{n+1})$,
$F(x_0,\ldots,x_n,0)=g(x_0,\ldots,x_n)$ and $F(x_0,\ldots,x_{n+1})=x$ if some other $x_i=0$.
Then the composite $F \circ \beta$, with
$\beta:\mathbb{A}^n \times \R \ra \mathbb{A}^{n+1}$ defined by
\[
\beta(x_0,\ldots,x_n,t)=(x_0,\ldots,x_{n-1},tx_n,(1-t)x_n),
\]
implies that $[f]=[g]$ in $[(\mathbb{A}^n,\partial \mathbb{A}^n),(X,x)]$.

Finally, we will show that $\alpha$ is a group homomorphism for $n \geq 1$.
In Theorem~\ref{equidef}, we showed that the restriction map
$i^*: [(\mathbb{A}^n,\partial_{\eps} \A^n),(X,x)] \ra [(\mathbb{A}^n,\partial \mathbb{A}^n),(X,x)]$
is an isomorphism.
Thus we can assume that we are given
$[f],[g] \in [(\mathbb{A}^n,\partial_{\eps} \A^n),(X,x)]$.

First we compute the product of $\alpha([f])$ and $\alpha([g])$ in $\pi_n^s(S^D(X),x)$.
By projecting $\A^{n+1}$ down to the union of its $(n-1)^{th}$ and $(n+1)^{th}$ faces
and composing with $f$ and $g$ on those faces, one obtains a map $h : \A^{n+1} \to X$
which can be used to compute the product.
This illustrates the projection in the case $n = 1$\mynobreakpar
\begin{center}
\vspace*{5pt}
\includegraphics[height=1in,width=1.15in]{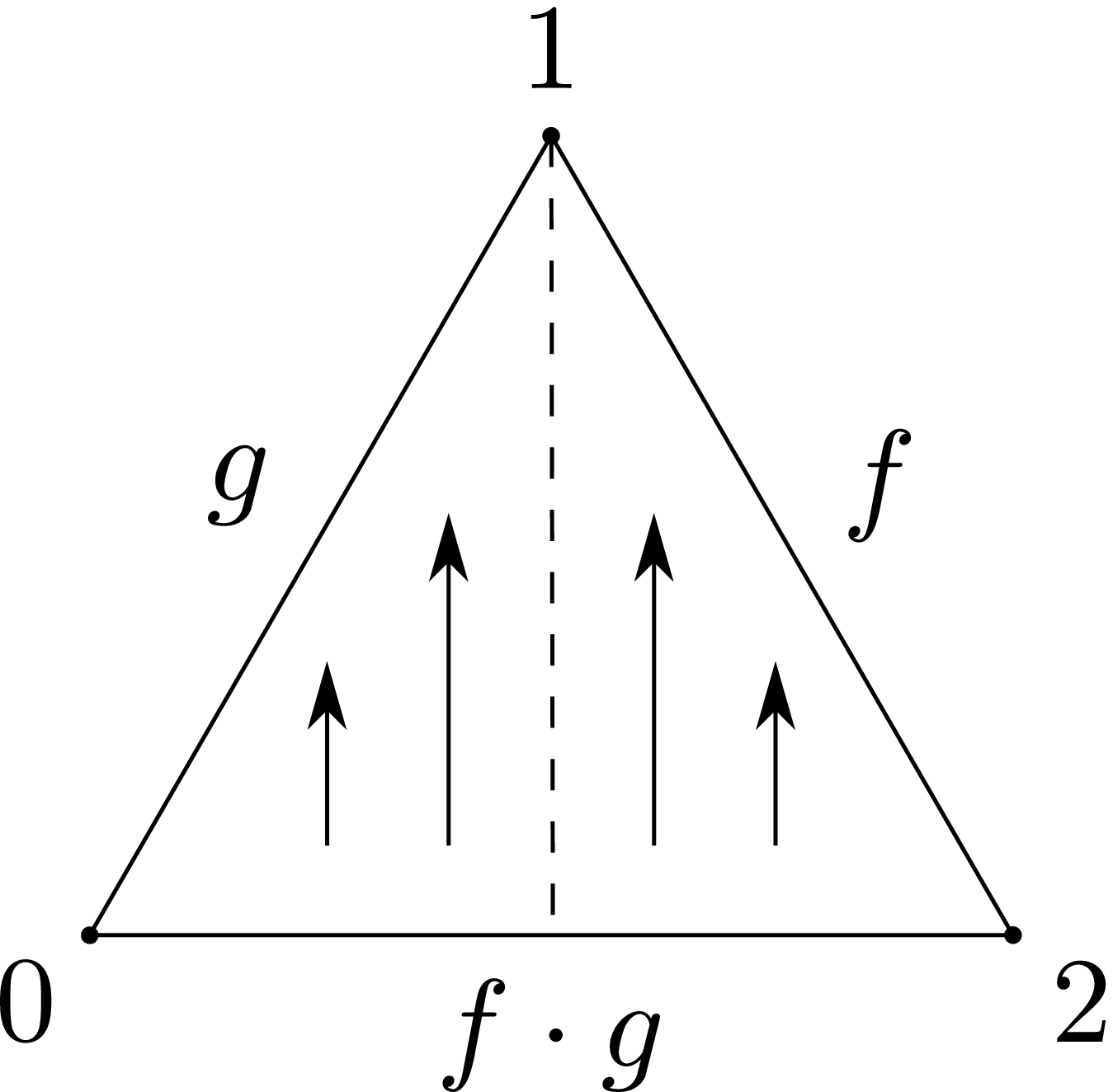}
\vspace*{-5pt}
\end{center}
and the general formula for the composite $h$ is
\[
h(x_0,\ldots,x_{n+1})=\begin{cases}
f(x_0,\ldots,x_{n-2},x_n+2x_{n-1},x_{n+1}-x_{n-1}),
& \text{if } x_{n+1} \geq x_{n-1} \\
g(x_0,\ldots,x_{n-2},x_{n-1}-x_{n+1},x_n+2x_{n+1}),
& \text{if } x_{n+1} \leq x_{n-1} .
\end{cases}
\]
The projection map is not smooth, but $h$ is smooth because $f$ and $g$
are constant near their boundaries.
It is straightforward to check that $d_i h = x$ for $0 \leq i < n-1$,
$d_{n-1} h = f$ and $d_{n+1} h = g$, and so by the definition of the product
in $\pi_n^s(S^D(X),x)$, $d_n h$ represents the product of $\alpha([f])$ and $\alpha([g])$.

Note that $d_n h$ is a certain juxtaposition of scaled and translated versions of $f$ and $g$.
On the other hand, the product of $[f]$ and $[g]$ in $[(\mathbb{A}^n,\partial_{\eps} \A^n),(X,x)]$
is given by first regarding $f$ and $g$ as maps $(\mathbb{R}^n,\partial_{\eps} \R^n) \to (X,x)$,
juxtaposing and scaling them as usual, and then scaling further to obtain a map
$(\mathbb{A}^n,\partial_{\eps} \A^n) \to (X,x)$,
as described in the proof of $(\ref{item:6}) \iff (\ref{item:2})$ in Theorem~\ref{equidef}.
One can see that the result is homotopic to $d_n h$ using techniques similar
to those used in Theorem~\ref{equidef}.
\end{proof}

\begin{cor}
A map $f : X \to Y$ between fibrant diffeological spaces is a weak equivalence
if and only if it induces an isomorphism on all smooth homotopy groups for all
basepoints.
\end{cor}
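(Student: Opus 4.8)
The plan is to reduce the statement to the classical characterization of weak equivalences between Kan complexes, using Theorem~\ref{fib-hg-compare} as the bridge between smooth and simplicial homotopy groups. Since $X$ and $Y$ are fibrant, $S^D(X)$ and $S^D(Y)$ are Kan complexes. Note also that a $0$-simplex of $S^D(X)$ is a smooth map $\A^0 \to X$, i.e.\ a point of $X$, so the basepoints of $S^D(X)$ are exactly the points of $X$; since homotopy groups depend only on the path component of the basepoint, the phrase ``all basepoints'' refers to the same data on the two sides of the adjunction.

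By Definition~\ref{wfc}, $f$ is a weak equivalence in $\Diff$ if and only if $S^D(f)$ is a weak equivalence in $\sSet$. Because geometric realizations of simplicial sets are CW complexes, this holds if and only if $|S^D(f)|$ is a weak homotopy equivalence, which for a map of CW complexes means that it induces a bijection on path components and an isomorphism on $\pi_n$ for every $n \geq 1$ and every basepoint. Using the natural isomorphism $\pi_n(|K|,x) \cong \pi_n^s(K,x)$ valid for any Kan complex $K$ (see, e.g.,~\cite{GJ}), I conclude that $S^D(f)$ is a weak equivalence if and only if it induces an isomorphism $\pi_n^s(S^D(X),x) \to \pi_n^s(S^D(Y),f(x))$ for every $n \in \N$ and every $x \in X$.

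Finally, Theorem~\ref{fib-hg-compare}, together with its $n=0$ case, supplies natural isomorphisms $\pi_n^D(\blank,\blank) \cong \pi_n^s(S^D(\blank),\blank)$; naturality ensures these commute with the homomorphisms induced by $f$. Therefore $S^D(f)$ is a $\pi_*^s$-isomorphism at all basepoints if and only if $f$ is a $\pi_*^D$-isomorphism at all basepoints, and chaining the equivalences of the previous two paragraphs yields the corollary. The only non-formal ingredient is the classical fact that a map between Kan complexes is a weak equivalence precisely when it is an isomorphism on all homotopy groups at all basepoints; this is standard, so I expect no real obstacle here — the corollary is essentially a bookkeeping consequence of Theorem~\ref{fib-hg-compare}.
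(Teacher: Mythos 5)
Your proposal is correct and is exactly the argument the paper has in mind: the corollary is stated without proof as an immediate consequence of Theorem~\ref{fib-hg-compare}, Definition~\ref{wfc}, and the standard fact that a map of Kan complexes is a weak equivalence if and only if it induces isomorphisms on all simplicial homotopy groups at all basepoints. Your handling of the $n=0$ case and of naturality supplies precisely the bookkeeping the paper leaves implicit.
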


The above results highlight the importance of understanding which diffeological
spaces are fibrant.
This is discussed in Subsection~\ref{ss:fibrant}.

We conclude this section with some observations about the diffeological realization
functor and the smooth singular functor.
Unlike the usual geometric realization functor $|\blank|:\sSet \ra \Top$, we have:

\begin{prop}\label{noncomm}
The functor $|\blank|_D: \sSet \ra \Diff$ does not commute with finite products.
\end{prop}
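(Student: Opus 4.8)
The plan is to find a small simplicial set $A$ for which $|A \times A|_D$ and $|A|_D \times |A|_D$ have different diffeologies. The natural candidate is $A = \Delta^1$, so that $|A|_D = \A^1 \cong \R$ and $|A|_D \times |A|_D \cong \R^2$, while $\Delta^1 \times \Delta^1$ is the simplicial square, whose realization $|\Delta^1 \times \Delta^1|_D$ is built by gluing two copies of $\A^2$ along an edge. The underlying set of $|\Delta^1 \times \Delta^1|_D$ is the topological square $[0,1]^2$ (or a PL model thereof), which agrees set-theoretically with the underlying set of $\R \times \R$ on the relevant region, so the question is purely about which diffeology the two constructions put on this common underlying set.

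First I would set up $|\Delta^1 \times \Delta^1|_D$ concretely as a quotient: by the explicit description of diffeological realization given after Definition~\ref{adjoint}, it is $(\A^2 \sqcup \A^2)/{\sim}$ with the quotient diffeology, the two triangles glued along the diagonal of the square. Then I would exhibit a map $p : U \to |\Delta^1 \times \Delta^1|_D$ from some open $U \subseteq \R^k$ which is a plot for the product diffeology on the square (i.e.\ smooth as a map into $\R^2$) but is \emph{not} a plot for the realization diffeology. The obstruction is exactly that a plot of the quotient must locally lift through the quotient map, i.e.\ locally factor through one of the two triangles $\A^2$ smoothly; a curve that crosses the gluing diagonal transversally while being smooth into $\R^2$ need not admit such a smooth lift near a crossing point, because the two triangular charts are not smoothly compatible along their common edge the way two half-planes of $\R^2$ are. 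The cleanest witness is a curve $p : \R \to $ square that is smooth into $\R^2$, crosses the diagonal at $t=0$, and is tangent to the diagonal to high order at $t=0$ from one side but transverse from the other (a flat-function construction), so that no smooth reparametrized lift into either closed triangle exists near $0$; alternatively one can use a curve crossing the diagonal so that in triangle coordinates it looks like $t \mapsto |t|$-type behaviour up to smoothing. I would verify (a) $p$ is smooth into $\R^2$, hence a plot of $|\Delta^1|_D \times |\Delta^1|_D$, and (b) $p$ does not locally lift through $\A^2 \sqcup \A^2 \to |\Delta^1\times\Delta^1|_D$ at $0$, hence is not a plot of $|\Delta^1\times\Delta^1|_D$. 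Since the canonical comparison map $|A\times B|_D \to |A|_D \times |B|_D$ is always smooth, this shows it is not a diffeomorphism, proving the proposition.

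The main obstacle will be step (b): pinning down precisely why a given curve fails to lift smoothly through the coproduct of triangles. One must use the sheaf/local-factorization description of the quotient diffeology carefully — a plot of the quotient factors smoothly through the quotient map only \emph{locally}, so it is not enough to observe that there is no global lift; one must produce a point (here the diagonal crossing) near which no smooth local lift exists, and this requires choosing the curve's contact with the diagonal asymmetrically (e.g.\ using a function that is flat to infinite order on one side and linear on the other) so that a smooth lift would have to be smooth across a corner of a triangle, which is impossible. Once the curve is chosen with this asymmetry, the non-existence of a local lift follows because any lift would, near $0$, be a smooth curve into a closed half-plane model of $\A^2$ whose image hits the boundary edge only at isolated parameter values with a forced non-smooth turning, contradicting smoothness. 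I expect the write-up to hinge on getting this local argument airtight; everything else (the identification of the realization as a glued pair of triangles, smoothness of the comparison map, and smoothness of $p$ into $\R^2$) is routine.
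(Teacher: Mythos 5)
There is a genuine gap, and it comes from a misidentification of what $|\Delta^1\times\Delta^1|_D$ actually is for the realization functor of this paper. The cosimplicial object here is built from the \emph{non-compact} simplices $\A^n\cong\R^n$, so $|\Delta^1\times\Delta^1|_D$ is the pushout of two copies of $\A^2$ (each a full affine plane) glued along a single copy of $\A^1$; its underlying set is not the square $[0,1]^2$, and it does not share an underlying set with $|\Delta^1|_D\times|\Delta^1|_D\cong\R^2$. In fact each of the two affine maps $\A^2\to\A^1\times\A^1$ is an affine isomorphism onto $\R^2$, so the canonical comparison map $|\Delta^1\times\Delta^1|_D\to\R^2$ is $2$-to-$1$ away from the gluing line --- it fails to be injective, not merely to be a diffeomorphism of a fixed underlying set. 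This also destroys the hard step (b) of your plan: since each copy of $\A^2$ surjects onto $\R^2$ by an affine isomorphism, \emph{every} smooth map $U\to\R^2$ lifts globally (not just locally) through $\A^2\sqcup\A^2$, so the curve-crossing-the-diagonal obstruction you are trying to engineer does not exist for this functor. (It is the sort of obstruction one would hunt for if the realization used compact simplices, which the paper deliberately avoids.)

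The repair is much easier than what you propose: compute $|\Delta^1\times\Delta^1|_D$ as the pushout of $|\Delta^2|_D \leftarrow |\Delta^1|_D \rightarrow |\Delta^2|_D$, i.e.\ two planes glued along a line (the paper identifies this with $\Lambda^2\times\R$), and observe that this is not diffeomorphic to $\R^2\cong|\Delta^1|_D\times|\Delta^1|_D$ --- indeed the canonical natural map is not even a bijection, so the functor cannot preserve this product. Your opening framing (test the canonical map $|A\times B|_D\to|A|_D\times|B|_D$ on $A=B=\Delta^1$) is exactly the paper's approach; it is only the analysis of the left-hand side that needs to be corrected.
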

\begin{proof}
For simplicial sets $X$ and $Y$,
we have a natural map $|X \times Y|_D \ra |X|_D \times |Y|_D$
induced from the projections.
One can show that this map is surjective.
However, it is not always a diffeomorphism.
For example, it is easy to see that $|\Delta^1 \times \Delta^1|_D$ is the pushout of
\[
\xymatrix{|\Delta^1|_D \ar[r]^{|d^0_*|} \ar[d]_{|d^2_*|} & |\Delta^2|_D,
\\ |\Delta^2|_D}
\]
and hence is not diffeomorphic to $\R^2 \cong |\Delta^1|_D \times |\Delta^1|_D$.
(In fact, $|\Delta^1 \times \Delta^1|_D \cong \Lambda^2 \times \R$,
and the natural map is not even injective in this case.)

As another example, let $A$ be the simplicial set whose non-degenerate simplices are
\[ \xymatrix{\bullet \ar@/^/[r] \ar@/_/[r] & \bullet} \]
Then $\R \times |A|_D$ is not the diffeological realization of any simplicial set $B$.
If it were, $B$ would have no non-degenerate simplices of dimension greater than two,
and the ways in which the $2$-simplices were attached would be visible in the ``seams''
that arise when copies of $\R^2$ are glued along lines or when lines are collapsed
to points.
The seams in $\R \times |A|_D$ consist of two parallel lines, but two edges of
a triangle always intersect, so this cannot arise as $|B|_D$.
\end{proof}

In the next two propositions,
we compare the three adjoint pairs $|\blank|_D:\sSet \rightleftharpoons \Diff:S^D$,
$D:\Diff \rightleftharpoons \Top:C$ and $|\blank|:\sSet \rightleftharpoons \Top:s$.
These results require some techniques from model categories,
but are not needed in the rest of the paper.

\begin{prop}\label{3adjoint1}
Given any topological space $A$,
there is a weak equivalence between $S^D(C(A))$ and $sA$ in $\sSet$.
\end{prop}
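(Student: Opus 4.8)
The plan is to exhibit, for a topological space $A$, a natural zig-zag of weak equivalences connecting $S^D(C(A))$ and $sA$. The natural thing to compare them through is $S^D(C(A))$ and the singular simplicial set of the underlying set of $A$ in the $D$-topology sense, but in fact the cleanest route is to compare both simplicial sets directly via the maps induced by the cosimplicial objects. The key observation is that for each $n$, an $n$-simplex of $S^D(C(A))$ is a smooth map $\A^n \to C(A)$, which by the definition of the continuous diffeology is exactly a continuous map $\A^n \to A$, while an $n$-simplex of $sA$ is a continuous map $|\Delta^n| \to A$. Since $|\Delta^n|$ is the compact simplex sitting inside $\A^n \cong \R^n$, restriction along the inclusion $|\Delta^n| \hookrightarrow \A^n$ gives a natural simplicial map $r : S^D(C(A)) \to sA$, and the plan is to show that $r$ is a weak equivalence.

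First I would verify that $r$ is a well-defined map of simplicial sets: the inclusions $|\Delta^n| \hookrightarrow \A^n$ are compatible with the cosimplicial structure maps (faces and degeneracies), so restricting a continuous map $\A^n \to A$ to $|\Delta^n|$ commutes with all the simplicial operators; this is routine. Next, and this is the crux, I would prove that $r$ is a weak equivalence. Here I would use the model-categorical fact (available since the section says these propositions may use model category techniques) that a map of simplicial sets is a weak equivalence iff it induces isomorphisms on all homotopy groups and $\pi_0$, and then reduce to showing that the restriction map $\mathrm{Top}_{\text{cts}}(\A^n, A) \to \mathrm{Top}(|\Delta^n|, A)$ is a ``Serre-type'' equivalence after passing to singular simplicial sets — more precisely, I would argue that the map $r$ fits into a commuting triangle with the natural weak equivalences $|S^D(C(A))| \to A$ and $|sA| \to A$ (the latter being the classical counit, which is a weak equivalence for all $A$). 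The map $|S^D(C(A))| \to A$ is the composite of $|S^D(C(A))| \to D(|S^D(C(A))|_D)$-type comparison maps; concretely, the adjunction counit $|sX|\to X$ in $\mathrm{Top}$ has a diffeological analogue $|S^D Y|_D \to Y$ for $Y \in \Diff$, and applying $D$ and composing with $D|S^DC(A)|_D \to DC(A) = A$ (since $DC = \mathrm{id}$ on underlying sets, and in fact $DC(A)$ has a topology finer than or equal to... — I would need to be careful here, as $DC(A)=A$ only up to refining the topology).

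Because of that last subtlety, the cleaner argument I would actually carry out is: show directly that for each $n$, the inclusion $|\Delta^n| \hookrightarrow \A^n$ is a homotopy equivalence of topological spaces compatible enough that $r$ induces a bijection on homotopy classes, using that $\A^n \cong \R^n$ is contractible and $|\Delta^n|$ is a deformation retract of $\R^n$ (via a retraction that is continuous but need not be smooth — which is fine since $C(A)$ only sees continuity). Then I would invoke that $S^D(C(A))_\bullet = \mathrm{Sing}^{\A}(A)_\bullet$ and $sA_\bullet = \mathrm{Sing}^{|\Delta|}(A)_\bullet$ are two models of the singular complex built from two different contractible cosimplicial replacements of the standard one, and that any such change of cosimplicial frame induces a weak equivalence on singular complexes — this is a standard fact, provable by a direct simplicial-homotopy argument using the retraction $\A^\bullet \to |\Delta^\bullet|$ together with a prism/chain-homotopy construction, exactly paralleling the proof that smooth and continuous singular homology of a manifold agree.

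The main obstacle I expect is making the comparison map and the homotopy natural and simplicial at the same time: the deformation retraction $r_t : \R^n \to |\Delta^n|$ can be chosen coordinate-wise and compatibly in $n$, but checking that the induced chain/simplicial homotopy between $r \circ (\text{inclusion-induced map})$ and the identity respects faces and degeneracies requires a careful choice — essentially one needs the retraction to be built from a single $1$-parameter family on $\R$ applied coordinate-wise, so that it is automatically natural for the cosimplicial operators, and then one feeds this into the standard simplicial subdivision/homotopy machinery. Everything else (well-definedness of $r$, the identification of $S^D(C(A))_n$ with continuous maps out of $\A^n$, and the reduction of ``weak equivalence'' to the homotopy-group statement) is routine given the earlier results, in particular Lemma~\ref{shtvssimhtp} and the adjunction defining $S^D$.
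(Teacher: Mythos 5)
Your comparison map---restriction along the inclusions $|\Delta^n|\hookrightarrow D(\A^n)$---and the key input---that both cosimplicial spaces are levelwise contractible---are exactly what the paper uses, so this is essentially the paper's proof. The paper sidesteps the naturality-of-the-retraction difficulty you flag at the end by citing the theory of cosimplicial resolutions instead of arguing by hand: both $|\Delta^\bullet|$ and $D(\A^\bullet)$ are cosimplicial resolutions of a point in $\Top$, the inclusion is a Reedy weak equivalence, and \cite[Corollary~16.5.5(1)]{Hi} then gives that the restriction map is a weak equivalence of fibrant simplicial sets, with no reverse map needed.
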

\begin{proof}
For any topological space $A$,
$S^D(C(A))=C^\infty(\mathbb{A}^\bullet,C(A))=\Top(D(\mathbb{A}^\bullet),A)$,
and $sA=\Top(|\Delta^\bullet|,A)$.
To compare these, we will make use of
the Reedy model structure on $\Top^\Delta$,
drawing upon many results from~\cite[Chapters~15 and~18]{Hi}.
Note that every topological space is fibrant in the standard model structure of $\Top$,
both $D(\mathbb{A}^\bullet)$ and $|\Delta^\bullet|$ are cosimplicial resolutions of a point in $\Top$,
and the natural inclusion map $i:|\Delta^\bullet| \ra D(\mathbb{A}^\bullet)$
is a Reedy weak equivalence in $\Top^\Delta$
(since for any $n \in \N$, both $|\Delta^n|$ and $D(\mathbb{A}^n)$ are contractible).
Therefore, $i^*:S^D(C(A)) \ra sA$ is a weak equivalence of fibrant simplicial sets,
by~\cite[Corollary 16.5.5(1)]{Hi}.
\end{proof}

\begin{prop}\label{3adjoint2}
Given any simplicial set $X$,
there is a weak equivalence between $D(|X|_D)$ and $|X|$ in $\Top$.
\end{prop}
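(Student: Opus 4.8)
The plan is to mimic the strategy of the previous proposition, but now working in $\Top^{\Delta^{op}}$ (simplicial topological spaces) rather than $\Top^\Delta$, and transporting the comparison through left adjoints instead of right adjoints. The key observation is that both realization functors are left Kan extensions along the Yoneda embedding: $|\blank| = \colim_{\Delta^n \to X} |\Delta^n|$ and $|\blank|_D \text{ followed by } D$ sends $X$ to $\colim_{\Delta^n \to X} D(\A^n)$, since $D$ is itself a left adjoint (Proposition~\ref{Difftopadj}) and hence commutes with the colimit defining $|X|_D$. Thus $D(|X|_D)$ and $|X|$ are the two realizations of $X$ with respect to the two cosimplicial spaces $D(\A^\bullet)$ and $|\Delta^\bullet|$, and the natural inclusion $i : |\Delta^\bullet| \to D(\A^\bullet)$ (the inclusion of the compact simplex into the non-compact one, which is a Reedy weak equivalence in $\Top^\Delta$ since every $|\Delta^n|$ and $D(\A^n)$ is contractible, exactly as noted in the proof of Proposition~\ref{3adjoint1}) induces the comparison map.

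First I would set up the functorial bar-construction / coend description: for a cosimplicial space $T^\bullet$, the induced realization of a simplicial set $X$ is the coend $X \otimes_\Delta T^\bullet$, and a levelwise weak equivalence of ``good'' cosimplicial spaces induces a weak equivalence on these coends provided $X$ is cofibrant (which every simplicial set is) and the cosimplicial spaces are Reedy cofibrant. So the second step is to check that $|\Delta^\bullet|$ and $D(\A^\bullet)$ are both cosimplicial resolutions (equivalently, Reedy-cofibrant cosimplicial resolutions of a point) in $\Top^\Delta$; for $|\Delta^\bullet|$ this is classical, and for $D(\A^\bullet)$ it follows since $D(\A^n)$ is homeomorphic to $\R^n$, the latching maps are the evident inclusions of the ``non-compact boundary'' into $\R^n$, and these are cofibrations of CW-type (again contractible, giving the resolution condition). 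Then I would invoke the dual of the machinery used before — concretely \cite[Corollary~16.5.5]{Hi} or the statement that geometric realization (as a left Quillen functor out of $\sSet$) applied to a Reedy equivalence of cosimplicial resolutions yields a weak equivalence — to conclude that $i$ induces a weak equivalence $|X| = X \otimes_\Delta |\Delta^\bullet| \to X \otimes_\Delta D(\A^\bullet) = D(|X|_D)$ for every simplicial set $X$.

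The main obstacle I anticipate is the bookkeeping needed to identify $D(|X|_D)$ with the coend $X \otimes_\Delta D(\A^\bullet)$: one must verify that $D$ genuinely commutes with the particular colimit presenting $|X|_D$ (it does, being a left adjoint, but the colimit is over the non-small category of simplices and one should be a little careful), and that the resulting topology agrees with the one coming from the coend in $\Top$. A secondary technical point is confirming the Reedy-cofibrancy of $D(\A^\bullet)$ rather than merely its levelwise contractibility — i.e.\ that the latching object $L^n D(\A^\bullet) \to D(\A^n)$ is a cofibration in $\Top$; this can be done by identifying $L^n D(\A^\bullet)$ with the coequalizer diffeology space $\Lambda^{n}$ (more precisely the boundary $\partial' \R^n$ from the discussion after Definition~\ref{adjoint}) under $D$, and checking the inclusion into $\R^n$ is a CW inclusion. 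Once these identifications are in place, the weak-equivalence conclusion is immediate from the cited Reedy-model-category results, just as in Proposition~\ref{3adjoint1}.
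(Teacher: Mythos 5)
Your proposal is correct and follows essentially the same route as the paper: use that $D$ is a left adjoint to identify $D(|X|_D)$ with $\colim_{\Delta^n \to X} D(\A^n)$, observe that $|\Delta^\bullet|$ and $D(\A^\bullet)$ are both cosimplicial resolutions of a point with a Reedy weak equivalence between them, and invoke Hirschhorn's results on realizations with respect to cosimplicial resolutions (the paper cites Proposition~16.5.6(1) and Corollary~7.7.2 of~\cite{Hi}, with~\cite[Lemma~A.5]{Se} as a concrete alternative). The technical points you flag (Reedy cofibrancy of $D(\A^\bullet)$, commuting $D$ past the colimit) are handled at the same level of detail in the paper's proof.
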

\begin{proof}
Recall that $|X| = \colim_{\Delta^n \to X} |\Delta^n|$ (Example~\ref{ex:adjoint-top}).
Since $D$ is a left adjoint, it commutes with colimits, and so we have that
$D(|X|_D) = \colim_{\Delta^n \to X} D(\A^n)$.
As described in the proof of the previous proposition,
both $|\Delta^\bullet|$ and $D(\mathbb{A}^\bullet)$ are cosimplicial resolutions of a point in $\Top$,
and so are Reedy cofibrant,
and the natural inclusion map $|\Delta^\bullet| \ra D(\mathbb{A}^\bullet)$
is a Reedy weak equivalence in $\Top^\Delta$.
By~\cite[Proposition~16.5.6(1) and Corollary 7.7.2]{Hi},
it follows that the induced map $|X| \to D(|X|_D)$ is a weak equivalence in $\Top$.
(One can also use~\cite[Lemma~A.5]{Se}, which is a less abstract form of the same result.)
\end{proof}

\subsection{Cofibrant diffeological spaces}\label{ss:cofibrant}

In this subsection, we study the cofibrant diffeological spaces.
After some preliminary observations, we prove one of the factorization
axioms of a model category, which implies that every diffeological
space has a functorial cofibrant replacement.
Then we give examples of cofibrant diffeological spaces, culminating
in the proof that fine diffeological vector spaces and $S^1$ are cofibrant.

We begin with some basic observations:

By the adjunction $|\blank|_D:\sSet \rightleftharpoons \Diff :S^D$
and Definition~\ref{wfc},
$X \ra Y$ is a fibration in $\Diff$ if and only if
it has the right lifting property with respect to $\Lambda^n \ra \R^n$ for all $n \in \Z^+$,
and $X \ra Y$ is a trivial fibration in $\Diff$ if and only if
it has the right lifting property with respect to $\partial' \R^n \ra \R^n$ for all $n \in \N$.
In particular, taking $n=0$, we see that all trivial fibrations are surjective.

Also, if a smooth map $f:A \ra B$ is the diffeological realization of a trivial cofibration in $\sSet$,
and $g:X \ra Y$ is a fibration in $\Diff$, then any commutative solid diagram
\[
\xymatrix{A \ar[r] \ar[d]_f & X \ar[d]^g \\ B \ar[r] \ar@{.>}[ur] & Y}
\]
in $\Diff$ has a smooth lift.

\begin{prop}\label{pr:cofs}
The functor $|\blank|_D:\sSet \ra \Diff$ preserves cofibrations.
The class of cofibrations in $\Diff$ is closed under
isomorphisms, pushouts, smooth retracts and (transfinite) compositions.
In particular, the diffeological realization of any simplicial set is cofibrant.
\end{prop}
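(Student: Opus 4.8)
The plan is to deduce everything from formal properties of adjunctions and lifting properties, together with the characterizations of (trivial) fibrations in $\Diff$ noted just before the statement. First I would recall that, by Definition~\ref{wfc}, a map in $\Diff$ is a cofibration precisely when it has the left lifting property with respect to every trivial fibration. A map $i$ in $\Diff$ has the left lifting property with respect to a fixed map $g$ if and only if, by adjunction, $S^D$ applied to $g$ has the right lifting property against… no — more directly: the class of maps having the left lifting property with respect to any fixed class of maps is automatically closed under isomorphisms, pushouts, retracts, and transfinite composition. This is the standard ``retract/closure'' argument: given a commutative square from a pushout (or retract, or composite) of cofibrations to a trivial fibration $g$, one builds the required lift by using the lifts that exist for the constituent cofibrations, reassembling via the universal property of the pushout (resp.\ the retract data, resp.\ transfinitely by induction). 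Since trivial fibrations form a fixed class, the class of maps with the left lifting property against all of them inherits all these closure properties, so the class of cofibrations in $\Diff$ is closed under isomorphisms, pushouts, smooth retracts and transfinite compositions.

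Next I would show that $|\blank|_D$ preserves cofibrations. By the characterization noted in the excerpt, a map $g:X \to Y$ in $\Diff$ is a trivial fibration exactly when it has the right lifting property with respect to the maps $\partial'\R^n \to \R^n$ for all $n \in \N$; and since $\partial'\R^n \cong |\partial\Delta^n|_D$ and $\R^n \cong |\Delta^n|_D$, and the inclusions $\partial\Delta^n \hookrightarrow \Delta^n$ are the generating cofibrations of $\sSet$, this says precisely that $g$ is a trivial fibration in $\Diff$ iff $S^D(g)$ has the right lifting property against all $\partial\Delta^n \hookrightarrow \Delta^n$, i.e.\ iff $S^D(g)$ is a trivial fibration in $\sSet$ (using that the generating cofibrations detect trivial fibrations in $\sSet$). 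Now let $j:A\to B$ be a cofibration in $\sSet$. Given a commutative square from $|j|_D$ to a trivial fibration $g$ in $\Diff$, adjunction transposes it to a commutative square from $j$ to $S^D(g)$ in $\sSet$; since $S^D(g)$ is a trivial fibration and $j$ is a cofibration, a lift exists in $\sSet$; transposing back gives the required lift in $\Diff$. Hence $|j|_D$ has the left lifting property against all trivial fibrations, i.e.\ is a cofibration in $\Diff$.

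Finally, the ``in particular'' clause: the empty diffeological space is $|\emptyset|_D$ (as $|\blank|_D$ is a left adjoint it preserves the initial object), so for any simplicial set $A$ the map $\emptyset \to A$ in $\sSet$ is a cofibration, and applying the previous paragraph, $|\emptyset|_D \to |A|_D$ is a cofibration in $\Diff$; that is, $|A|_D$ is cofibrant.

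The main obstacle I anticipate is purely bookkeeping rather than conceptual: one must be careful in identifying $|\partial\Delta^n|_D$ with $\partial'\R^n$ (including the coequalizer-vs-subspace subtlety flagged in Remark~\ref{coeqvssub}) so that the stated characterization of trivial fibrations in $\Diff$ lines up exactly with the generating cofibrations of $\sSet$; and one should state the transfinite-composition closure with enough care (choosing lifts compatibly along the tower, at limit ordinals using that a map out of a colimit is determined by its restrictions) that the argument is genuinely complete. None of this requires any genuinely hard input beyond the adjunction and the already-established lifting characterizations.
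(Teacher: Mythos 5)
Your proof is correct and is precisely the formal argument the paper intends: the paper's own proof of this proposition reads, in its entirety, ``This is formal.'' Your spelled-out version (closure properties of left-lifting-property classes, the adjunction transposition of lifting squares against trivial fibrations, and preservation of the initial object by the left adjoint) is exactly the standard argument being alluded to.
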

\begin{proof}
This is formal.
\end{proof}

\begin{prop}\label{factorization}
Every smooth map $f$ in $\Diff$ has a functorial factorization as
$f=\alpha(f) \circ \beta(f)$ with $\alpha(f)$ a trivial fibration and $\beta(f)$ a cofibration.
\end{prop}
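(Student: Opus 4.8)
The plan is to use Quillen's small object argument applied to a suitable set of generating cofibrations in $\Diff$. From the discussion immediately preceding the proposition, we know that $f\colon X\to Y$ is a trivial fibration in $\Diff$ if and only if it has the right lifting property with respect to the set
\[
  I = \{\, \partial' \R^n \ra \R^n \mid n \in \N \,\} = \{\, |\partial\Delta^n|_D \ra |\Delta^n|_D \mid n \in \N \,\},
\]
that is, the diffeological realizations of the generating cofibrations of $\sSet$. Since $\Diff$ is cocomplete (indeed complete and cocomplete, by the theorem in Section~\ref{se:basics}), I can run the small object argument with respect to $I$. This produces, for any smooth map $f\colon X\to Y$, a functorial factorization $f = \alpha(f)\circ\beta(f)$ where $\alpha(f)$ has the right lifting property with respect to $I$ — hence is a trivial fibration by the cited characterization — and $\beta(f)$ is a (transfinite) composition of pushouts of coproducts of maps in $I$. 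By Proposition~\ref{pr:cofs}, the class of cofibrations in $\Diff$ is closed under coproducts, pushouts and transfinite composition (coproducts being a special case of pushouts along maps from the initial object, or one can note directly that a coproduct of cofibrations is a cofibration since cofibrations are defined by a left lifting property), so $\beta(f)$ is a cofibration. This gives exactly the asserted factorization.

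First I would recall the statement of the small object argument, e.g.\ from~\cite[Section 10.5]{Hi} or~\cite{Ho}, which requires the domains of the maps in $I$ to be small relative to $I$-cell complexes. Here the domains are the $\partial'\R^n$, which have underlying sets that are (finite unions of) Euclidean pieces; since all the maps in $I$ are, on underlying sets, inclusions of open or closed subsets of Euclidean space and the underlying-set functor $\Diff\to\Set$ preserves the relevant colimits, one checks that each $\partial'\R^n$ is $\kappa$-small for $\kappa = \aleph_0$ (or, to be safe, for some small regular cardinal). This smallness is what makes the transfinite induction terminate. Then I would invoke the functoriality clause of the small object argument: the whole construction is natural in the map $f$, which yields the functorial factorization claimed.

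Alternatively — and this may be the cleanest exposition — one can transport the factorization from $\sSet$ along the adjunction, but that does not quite work on the nose because $|\blank|_D$ does not preserve all the needed structure; the honest route is to run the argument directly in $\Diff$ as above. The main obstacle, and the only place requiring genuine care, is the verification of the smallness hypothesis: one must confirm that colimits of the transfinite sequences appearing in the small object argument are computed on underlying sets (so that a map out of $\partial'\R^n$ factors through a finite stage), which follows from the fact that $\Diff$ is a quasi-topos whose forgetful functor to $\Set$ preserves filtered colimits and pushouts of monomorphisms — see Remark~\ref{rem:concrete}. Everything else in the proof is formal, which is presumably why the authors will phrase it tersely.
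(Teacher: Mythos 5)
Your proposal is correct and is essentially the paper's own proof: both apply the small object argument to the set $I=\{\partial'\R^n\to\R^n\mid n\in\N\}$, identify maps with the right lifting property against $I$ as the trivial fibrations, use Proposition~\ref{pr:cofs} to see that relative $I$-cell complexes are cofibrations, and reduce the remaining work to a smallness verification justified either directly or via local presentability of $\Diff$ as a category of concrete sheaves (Remark~\ref{rem:concrete}). The only cosmetic difference is that the paper asserts smallness of \emph{every} diffeological space, while you verify it only for the domains $\partial'\R^n$, which suffices.
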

\begin{proof}
We claim that every diffeological space is small, in the sense
used in the small object argument;
see, for example,~\cite[Definition~2.1.3]{Ho}.
One can prove smallness by a straighforward argument, directly
from the definitions~\cite[Theorem~2.1.3]{W1}.
Or one can use that $\Diff$ is equivalent to the category of
concrete sheaves over a concrete site (see Remark~\ref{rem:concrete})
and then apply~\cite[Theorem~C2.2.13]{J}, which says
the category of concrete sheaves over a concrete site is locally presentable.

In any case, the result then follows by applying the small object
argument (e.g.,~\cite[Theorem~2.1.14]{Ho})
to the set $I=\{\partial' \R^n \ra \R^n \mid n \in \N\}$.
\end{proof}

By applying Proposition~\ref{factorization} to the map $\emptyset \to X$,
we obtain the following immediate consequence:

\begin{cor}\label{cofrep}
Every diffeological space has a functorial cofibrant replacement.
\end{cor}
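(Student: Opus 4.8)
The plan is to deduce this immediately from Proposition~\ref{factorization}. Given a diffeological space $X$, I would apply the functorial factorization to the unique map $\emptyset \to X$ from the initial object. This writes $\emptyset \to X$ as $\alpha \circ \beta$, where $\beta \colon \emptyset \to QX$ is a cofibration and $\alpha \colon QX \to X$ is a trivial fibration, and the factorization is functorial in the map $\emptyset \to X$, hence functorial in $X$. Since the map from the initial object to $QX$ is a cofibration, $QX$ is by definition a cofibrant diffeological space, and since $\alpha \colon QX \to X$ is a trivial fibration it is in particular a weak equivalence; thus $QX$ is a cofibrant replacement for $X$.

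The only point requiring a word of care is the claim of \emph{functoriality}: a morphism $g \colon X \to X'$ induces a commuting square with vertical maps $\emptyset \to X$ and $\emptyset \to X'$ and horizontal maps $\mathrm{id}_\emptyset$ and $g$, and the functoriality in Proposition~\ref{factorization} (which comes from running the small object argument, whose construction is natural in the map being factored) produces a canonical map $Qg \colon QX \to QX'$ making both resulting squares commute, and compatibly with composition and identities. So $X \mapsto QX$ is a functor $\Diff \to \Diff$ together with a natural transformation $\alpha \colon Q \Rightarrow \mathrm{id}_{\Diff}$ whose components are trivial fibrations between cofibrant objects. There is essentially no obstacle here; the entire content was already established in Proposition~\ref{factorization}, and the corollary is a one-line specialization to $\emptyset \to X$. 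Accordingly I expect the proof to read simply: ``Apply Proposition~\ref{factorization} to the map $\emptyset \to X$.''
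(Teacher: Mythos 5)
Your proposal is correct and matches the paper exactly: the corollary is obtained by applying Proposition~\ref{factorization} to the unique map $\emptyset \to X$, with functoriality inherited from the functorial factorization. Nothing further is needed.
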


\begin{ex}\
\begin{enumerate}
\item $\Lambda^n \ra \R^n$ for any $n \in \Z^+$
and $\partial' \R^m \ra \R^m$ for any $m \in \N$ are all cofibrations,
since they are diffeological realization of cofibrations in $\sSet$.

\item $\R^n$ is cofibrant for any $n \in \N$, since $\R^n=|\Delta^n|_D$.

\item $\Lambda^2$ is cofibrant, since it is the pushout of
\[
\xymatrix{\R^0 \ar[r] \ar[d] & \R \\ \R.}
\]

\item More generally, all $\Lambda^n=|\Lambda^n_k|_D$
and $\partial' \R^n=|\partial \Delta^n|_D$ are cofibrant.
This can also be seen by building them as pushouts along the cofibrations in the above examples,
and along the way, we obtain other interesting cofibrations and cofibrant objects.
For example, $\vee_{i=1}^n \R$ is cofibrant for any $n$.
\end{enumerate}
\end{ex}

\begin{ex}
A \dfn{diffeological vector space} is an $\R$-vector space with a diffeology
such that the addition and the scalar multiplication maps are smooth.
Any $\R$-vector space $V$ has a smallest diffeology making it a diffeological vector space,
and this is called the \dfn{fine diffeology}; see~\cite[Chapter~3]{I2}.
The fine diffeology is generated by all linear maps from finite dimensional $\R$-vector spaces to $V$.
A diffeological vector space with the fine diffeology is called a \dfn{fine diffeological vector space}.

For example, the colimit in $\Diff$ of the natural inclusions
\[
\xymatrix{\R^0 \ar[r] & \R^1 \ar[r] & \cdots \ar[r] & \R^n \ar[r] & \R^{n+1} \ar[r] & \cdots }
\]
is a fine diffeological vector space.
Since each of the inclusions is a cofibration, the colimit is cofibrant.
\end{ex}

More generally, we have:

\begin{prop}\label{pr:fine}
Every fine diffeological vector space is cofibrant.
\end{prop}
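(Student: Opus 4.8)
The plan is to exhibit $V$, equipped with its fine diffeology, as the diffeological realization $|\blank|_D$ of a suitable simplicial set $X$; Proposition~\ref{pr:cofs} then gives the conclusion, since it asserts that the diffeological realization of any simplicial set is cofibrant. The realization step itself uses only that $|\blank|_D$ is a left adjoint (Definition~\ref{adjoint}), hence preserves colimits.

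Concretely, I would choose a basis $B$ of $V$, so $V = \bigoplus_{b \in B}\R$, and recall the standard description of the fine diffeology: for a finite subset $F \subseteq B$ let $V_F$ be the span of $F$, a finite-dimensional vector space with its standard diffeology, identified with $\R^F$; then the fine diffeology on $V$ is the colimit diffeology of the filtered diagram $F \mapsto V_F$ over the directed poset of finite subsets of $B$, with coordinate inclusions as transition maps (equivalently, a plot of $V$ is a map that locally factors smoothly through some $V_F$). This is contained in~\cite[Chapter~3]{I2}; see also~\cite{W2}. So $V \cong \colim_F V_F$ in $\Diff$. Now fix a well-ordering of $B$ and adjoin a new smallest element $*$, writing $B^+ = \{*\} \sqcup B$. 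For each finite $F \subseteq B$, the totally ordered set $F \cup \{*\}$ determines a standard simplex $\Delta^{F\cup\{*\}} \cong \Delta^{|F|}$, and an inclusion $F \subseteq F'$ induces an order-preserving, hence simplicial, coface map $\Delta^{F\cup\{*\}} \to \Delta^{F'\cup\{*\}}$; let $X := \colim_F \Delta^{F\cup\{*\}}$. Since $|\blank|_D$ preserves colimits, $|X|_D \cong \colim_F |\Delta^{F\cup\{*\}}|_D = \colim_F \A^{F\cup\{*\}}$, where $\A^{F\cup\{*\}} = \{x \in \R^{F\cup\{*\}} \mid \sum_c x_c = 1\}$. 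The map $\theta_F \colon \A^{F\cup\{*\}} \to V_F \cong \R^F$ that drops the $*$-coordinate (recovered as $1 - \sum_{b\in F} x_b$) is a diffeomorphism, and one checks that the $\theta_F$ are natural: under $F \subseteq F'$ the coface map on the left corresponds exactly to the coordinate inclusion $\R^F \hookrightarrow \R^{F'}$ on the right. Hence $\theta$ is an isomorphism of diagrams and $|X|_D \cong \colim_F V_F \cong V$, so $V$ is cofibrant. The case $B = \emptyset$ is the degenerate instance $V = \R^0 = |\Delta^0|_D$.

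The two points that need genuine (though routine) justification are the description of the fine diffeology as the filtered colimit of the finite-dimensional subspaces, and the naturality of the diffeomorphisms $\theta_F$. The extra basepoint $*$ is included precisely to make these identifications simultaneously compatible with every transition map: trying to identify the affine simplex on $F$ itself with $V_F$ would force a choice of transverse direction depending on $\min F$, breaking naturality. I would also note the alternative of well-ordering $B$ and writing $V$ as a transfinite composition of the inclusions $V_{<\beta} \hookrightarrow V_{<\beta+1}$; the obstacle there is that at successor steps one must show that adjoining a single dimension to an infinite-dimensional fine vector space is a cofibration, a pushout-product type assertion that the realization argument avoids having to prove.
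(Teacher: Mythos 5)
Your proof is correct, and it reaches the conclusion by a genuinely different route from the paper's. The paper starts from the same presentation---$V$ is the colimit, over the poset of finite subsets $J$ of a basis, of the finite-dimensional coordinate subspaces $V_J$ with their standard diffeology---but then computes the latching maps of this diagram: $\colim_{J' \subsetneq J} V_{J'} \to V_J$ is diffeomorphic to $\Lambda^{|J|} \to \R^{|J|}$, hence a cofibration, and the standard induction for diagrams over a direct category (using closure of cofibrations under pushouts and transfinite composition, as in the proof of \cite[Proposition~5.1.4]{Ho}) shows that the colimit is cofibrant. You instead observe that after adjoining the cone point $*$ the entire diagram $J \mapsto V_J$ is the image under $|\blank|_D$ of the diagram $J \mapsto \Delta^{J\cup\{*\}}$, whose colimit in $\sSet$ is the nerve $N(B^+)$ of the ordered set $B^+$; since $|\blank|_D$ preserves colimits, $V \cong |N(B^+)|_D$ and Proposition~\ref{pr:cofs} applies directly. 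The two arguments encode the same geometric content (the finite-dimensional coordinate subspaces of $V$ are glued together exactly like the faces through a fixed vertex of a large simplex), but yours trades the latching-map computation and the induction for the cone-point bookkeeping and the identification of the filtered colimit of nerves; it also yields the marginally stronger conclusion that $V$ is literally the diffeological realization of a simplicial set, not merely cofibrant. The two points you flag as needing justification---the filtered-colimit description of the fine diffeology and the naturality of the diffeomorphisms $\theta_F$---are both routine and check out; the first is also the (equally lightly justified) starting point of the paper's own proof.
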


\begin{proof}
Let $V$ be an arbitrary fine diffeological vector space.
Choose a basis $\{v_i\}_{i \in I}$ for $V$, and consider the
category $\cI$ of finite subsets of $I$ and inclusions.
There is a functor $F: \cI \to \Diff$ sending a finite subset $J$
to the span of $\{v_j\}_{j \in J}$, with the sub-diffeology
(which is the standard diffeology).
The colimit of $F$ is $V$, essentially by the definition of the fine diffeology.

For each finite subset $J$, the \dfn{latching map} is the map
$\colim_{J' \subset J} F(J') \to F(J)$, where the colimit is over \emph{proper}
subsets of $J$.
This map is diffeomorphic to the map $\Lambda^n \ra \R^n$, where $n = |J|$,
and so it is a cofibration.
Thus, by a standard induction (see, for example, the proof of Proposition~5.1.4 in~\cite{Ho}),
we conclude that $\colim F \cong V$ is cofibrant.
\end{proof}

\begin{ex}\label{hatS1}
The pushout of
\[
\xymatrix{\partial' \R \ar[r] \ar[d] & \R^0 \\ \R}
\]
will be denoted by $\hat{S}^1$ and is cofibrant.

Clearly $\hat{S}^1$ is not diffeomorphic to $S^1$,
because $\hat{S}^1$ has ``tails''.
But even the diffeological subspace of $\hat{S}^1$ with the tails removed
is not diffeomorphic to $S^1$,
because of the point where the gluing occurs.

\begin{figure}[ht]
\begin{minipage}[b]{0.45\linewidth}
\centerline{
\xy
(0,0)*{}="A";
(10,0)*{}="B";
(5,3.8)*{\bullet};
"A";"B" **\crv{(15,10) & (5,15) & (-5,10)};
\endxy
}
\caption{$\hat{S}^1$}
\end{minipage}
\hspace{0cm}
\begin{minipage}[b]{0.45\linewidth}
\centerline{
\xy
(0,0)*{\bullet}="A";
"A";"A" **\crv{(10,10) & (0,15) & (-10,10)};
\endxy
}
\caption{$\hat{S}^1$ with tails removed}
\end{minipage}
\end{figure}
\end{ex}

As we have seen in Proposition~\ref{pr:cofs}, the diffeological realization of
any simplicial set is cofibrant.
However, the spaces built in this way have tails and gluing points, and so
cannot be smooth manifolds.
Nevertheless, we are able to show that $S^1$ is a smooth retract of such a
realization, and therefore that it is cofibrant.

\begin{prop}\label{S1cof}
$S^1$ is cofibrant.
\end{prop}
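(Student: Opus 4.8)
The plan is to produce a simplicial set $K$ for which $S^1$ is a smooth retract of the diffeological realization $|K|_D$, and then conclude. Once such a $K$ and retraction data $s\colon S^1\to|K|_D$, $r\colon|K|_D\to S^1$ with $r\circ s=\id_{S^1}$ are in hand, the result is immediate: by Proposition~\ref{pr:cofs} the realization $|K|_D$ is cofibrant, and since cofibrations are closed under smooth retracts (apply this to the maps out of the initial object $\emptyset$), the retract $S^1$ is cofibrant as well.

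The substantive work is choosing $K$ and constructing $r$ and $s$. The naive guess is the simplicial circle $\Delta^1/\partial\Delta^1$, whose realization is the space $\hat S^1$ of Example~\ref{hatS1}; but $\hat S^1$ has a bad ``corner'' at its base point (and two tails), and one checks that no smooth map $S^1\to\hat S^1$ admits a smooth left inverse, so $K$ must be chosen more carefully. In particular $|K|_D$ should be higher-dimensional, so that a genuinely smooth copy of $S^1$ can sit inside it away from the $1$-dimensional gluing seams, and so that there is enough room to deformation-retract the tails and the auxiliary cells onto it. I would search among small simplicial models of a ``thickened'' circle (a simplicial annulus or M\"obius band, or the nerve of a suitable small category) for one whose realization admits such a retraction, and then write $r$ down cell by cell: on each top cell $\A^n\cong\R^n$ one uses cut-off functions to reel in the non-compact directions and push the cell onto the embedded circle, taking care that the formulas on the various $\R^n\times\{\sigma\}$ are compatible with the equivalence relation defining $|K|_D=(\coprod_n\R^n\times K_n)/\!\sim$, so that $r$ descends to a well-defined smooth map; the section $s$ is then the inclusion of the embedded $S^1$.

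The main obstacle is precisely this construction. Realizations of simplicial sets are smoothly pathological --- they carry tails and gluing points that a smooth curve cannot cross --- so it is delicate even to arrange that $|K|_D$ smoothly retracts onto a circle at all, and once a candidate is found, the smoothness of $r$ at the seams where several cells meet, together with its compatibility with the quotient relation, must be verified by hand; I expect this to be the most technical part. As a conceptual safety net one always has the functorial cofibrant replacement $q\colon Q\to S^1$ of Corollary~\ref{cofrep} with $Q$ cofibrant, so it would suffice to produce a smooth section of $q$; but since $Q$ arises from the small object argument this seems harder to exploit directly, so I would pursue the explicit realization above.
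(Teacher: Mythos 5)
Your overall strategy is exactly the paper's: exhibit $S^1$ as a smooth retract of the realization $|K|_D$ of a simplicial set, and conclude from Proposition~\ref{pr:cofs} and closure of cofibrations under retracts. You also correctly diagnose why $\Delta^1/\partial\Delta^1$ fails. But the proposal stops short of the actual content: no $K$ is produced, and no retraction or section is written down. Worse, the heuristic you propose to guide the search --- put a smooth copy of $S^1$ inside $|K|_D$ ``away from the $1$-dimensional gluing seams'' --- cannot succeed. If the embedded circle avoided all seams it would lie in a single top cell $\A^n\cong\R^n$, and then restricting the retraction $r$ to that cell would exhibit $S^1$ as a smooth retract of $\R^n$, forcing $\Z\cong\pi_1^D(S^1)$ to be a retract of $\pi_1^D(\R^n)=0$. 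So the section is \emph{forced} to cross the seams, and the entire difficulty of the proof is concentrated there, in the smoothness of the section $s$ rather than of the retraction $r$ (which in the paper is just an affine function composed with $\theta\mapsto e^{i\pi\theta}$ on each cell, trivially compatible with the identifications).

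The missing idea is how to cross a seam smoothly: the paper takes $K$ to be a simplicial cylinder with its two boundary circles collapsed appropriately, so that $|K|_D$ is two copies of $\A^2$ glued along two lines, and embeds the circle so that it meets each gluing line \emph{tangentially}, using a cut-off function to blend the constant height $1/2$ in the interior of each triangle into the affine functions cutting out the edges. Tangency is what makes the composite of $s$ with any plot of $S^1$ locally factor smoothly through one of the two $\A^2$'s, which is exactly what smoothness into the colimit diffeology requires. Without this (or an equivalent device), ``verify smoothness at the seams by hand'' is not a step you can expect to complete, since a generic smooth arc in $|K|_D$ simply cannot pass from one cell to another. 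So: right framework, but the proof as proposed has a genuine gap at its central point.
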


\begin{proof}
Let $X$ be the simplicial set whose non-degenerate simplices are:
\[
\centerline{
\xy
(0,0)*+{x}="E";(20,0)*+{x}="F";(10,17.32)*+{y}="G";(30,17.32)*+{y}="H";(10,5.77)*+{A};(20.7,12)*+{B};
{\ar   "E";"F"};
{\ar^a "E";"G"};
{\ar^b "G";"F"};
{\ar_a "F";"H"};
{\ar   "G";"H"};
\endxy
}
\]
Note that the left edge is identified with the right edge, forming a cylinder.
So $|X|_D$ consists of two copies of $\A^2$ glued along two lines $a$ and $b$, and is cofibrant.
There is a map $|X|_D \to S^1$ sending a point to $e^{i\pi \theta}$,
where $\theta$ is the point's horizontal position on the page.
More precisely, let $\sigma_A : \A^{2} \to S^1$ be defined by $\sigma_A(x,y,z) = e^{i\pi(z-x)}$,
and let $\sigma_B : \A^{2} \to S^1$ be defined by $\sigma_B(x,y,z) = e^{i\pi(1+z-x)}$.
The affine functions $z-x$ and $1+z-x$ take the values shown here:
\[
\centerline{
\xy
(0,0)*+{-1}="E";(20,0)*+{1}="F";(10,17.32)*+{0}="G";(30,17.32)*+{2}="H";(10,5.77)*+{A};(20.7,12)*+{B};
{\ar   "E";"F"};
{\ar^a "E";"G"};
{\ar^b "G";"F"};
{\ar_a "F";"H"};
{\ar   "G";"H"};
\endxy
}
\]
Since those values are used modulo 2, $\sigma_A$ and $\sigma_B$ agree
on the identified lines $a$ and $b$, and thus define a smooth function $\sigma : |X|_D \to S^1$.

It will suffice to prove that $\sigma$ has a smooth section $s : S^1 \to |X|_D$.
In order to map into $|X|_D$, we must be careful to tangentially approach
the lines along which the gluing occurs.
Here is what our embedding will look like:\mynobreakpar
\begin{center}
\includegraphics[width=3in]{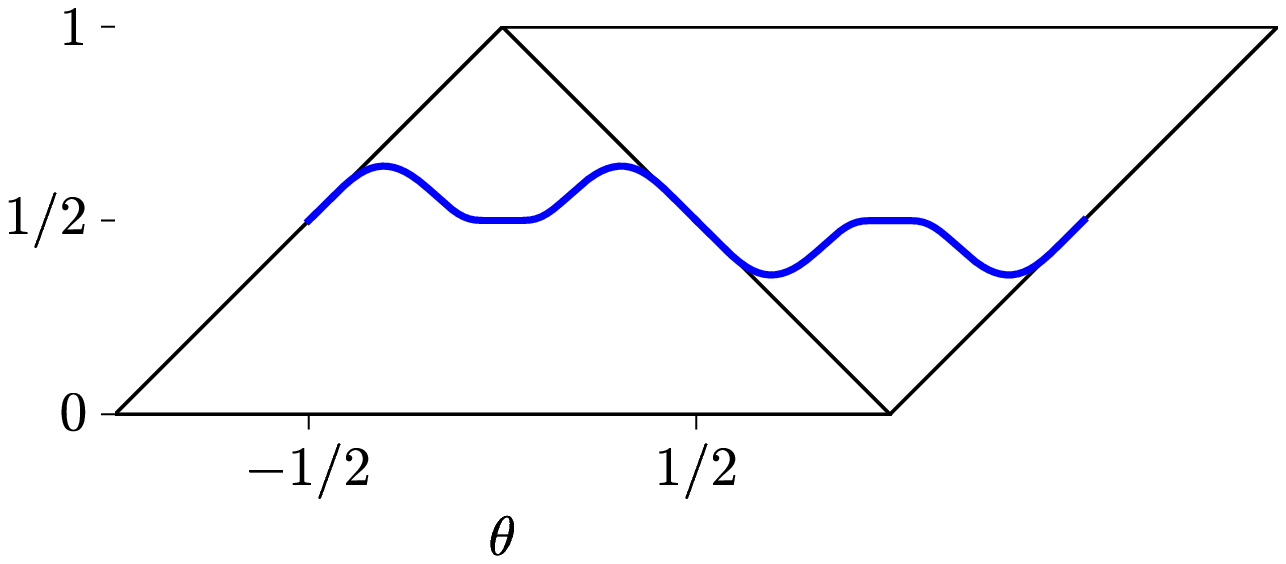}
\vspace*{-5pt}
\end{center}
The function describing the height of the portion on the left is given by
\[
  R(\theta) = \phi(2 |\theta|) \, (1 - |\theta|) + (1 - \phi(2 |\theta|)) \, \frac{1}{2},
\]
for $-1/2 \leq \theta \leq 1/2$, where $\phi$ is a cut-off function.
This blends between the function $1 - |\theta|$, which gives the edges
$a$ and $b$ of the left simplex for $|\theta|$ near $1/2$,
and the constant function $1/2$, near $\theta = 0$.
For $-1/2 \leq \theta \leq 1/2$, the unique point $(x,y,z) \in \A^2$
such that $\sigma_A(x,y,z) = e^{i\pi (z-x)} = e^{i\pi \theta}$ and $y = R(\theta)$ is given by
$c(\theta) := ((1-\theta-R(\theta))/2,\, R(\theta),\, (1+\theta-R(\theta))/2)$.
A similar argument works for the second simplex, and so our section
$s : S^1 \to |X|_D$ is given by
\[
  s(e^{i\pi \theta}) = \begin{cases}
                        (c(\theta),\, A ),    &          - \frac{1}{2} \leq \theta \leq \frac{1}{2} , \\
                        (c(\theta-1),\, B ),  & \phantom{-}\frac{1}{2} \leq \theta \leq \frac{3}{2} .
                      \end{cases}
\]
The section $s$ is smooth because it approaches the edges tangentially.
\end{proof}

Our argument actually shows that the natural map
$|S^D(S^1)|_D \to S^1$
has a smooth section.
We conjecture that this is true for any smooth manifold,
and therefore that every smooth manifold is cofibrant.

In Example~\ref{noncof} we will see that there exist non-cofibrant diffeological spaces.

\subsection{Fibrant diffeological spaces}\label{ss:fibrant}

In this subsection, motivated by Theorem~\ref{fib-hg-compare},
we study the fibrant diffeological spaces.
After some preliminaries,
we show that diffeological bundles with fibrant fibers are fibrations,
which allows us to show that the irrational tori are not cofibrant.
We then prove the elementary fact that every diffeological group
is fibrant, and use this to
show that any homogeneous diffeological space is fibrant.
The fact that diffeomorphism groups are diffeological groups then
implies that every smooth manifold is fibrant, one of our key results.
We are also able to show that many function spaces are fibrant.
We then give examples of non-fibrant diffeological spaces, in
particular showing that a smooth manifold with boundary is not fibrant.

\begin{prop}[Right Proper]
Let
\[
\xymatrix{W \ar[r]^h \ar[d] & X \ar[d]^f \\ Z \ar[r]_g & Y}
\]
be a pullback diagram in $\Diff$ with $f$ a fibration and $g$ a weak equivalence.
Then $h$ is also a weak equivalence.
\end{prop}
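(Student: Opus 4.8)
The plan is to reduce the statement about diffeological spaces to the corresponding right-properness statement for the standard model structure on $\sSet$, using the fact that $S^D$ is a right adjoint and our definitions of fibration and weak equivalence in $\Diff$ are created by $S^D$. The main subtlety is that $S^D$ need not send the pullback square in $\Diff$ to a pullback square in $\sSet$, so I cannot simply apply right properness of $\sSet$ directly to $S^D$ of the square. Instead I will argue through a comparison of the two relevant pullbacks.

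First, apply $S^D$ to the square. Since $S^D$ is a right adjoint it preserves limits, so $S^D(W)$ is the pullback of $S^D(g)$ and $S^D(f)$ in $\sSet$; call this pullback square $(\ast)$. By Definition~\ref{wfc}, $S^D(f)$ is a Kan fibration and $S^D(g)$ is a weak equivalence of simplicial sets. Now I invoke right properness of the standard model structure on $\sSet$ (this is part of the statement, cited after Definition~\ref{sSet-wfc}, that $\sSet$ is a \emph{cofibrantly generated proper} model category): in a pullback square in $\sSet$ with one leg a fibration and the opposite leg a weak equivalence, the remaining leg is a weak equivalence. Applied to $(\ast)$, this shows that $S^D(h) : S^D(W) \to S^D(X)$ is a weak equivalence in $\sSet$. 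By Definition~\ref{wfc} again, $h$ is therefore a weak equivalence in $\Diff$, which is exactly what we want.

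The key point that makes this work, and the step I expect to need the most care, is the observation that although $S^D$ does not commute with products (Proposition~\ref{noncomm} shows $|\blank|_D$ fails this, and one should not expect $S^D$ to behave better than a generic right adjoint), it \emph{does} commute with pullbacks simply because it is a right adjoint and pullbacks are limits. So the only real content is: (i) $S^D$ preserves the pullback, (ii) $S^D(f)$ is a fibration and $S^D(g)$ a weak equivalence by definition, and (iii) $\sSet$ is right proper. Nothing about smoothness or diffeologies enters beyond the formal adjunction. I would write the proof essentially in this three-line form, making explicit the reference to right properness of $\sSet$ and to Definition~\ref{wfc} for the translation back and forth. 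No separate lemmas or calculations are needed.
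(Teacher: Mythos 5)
Your proposal is correct and is exactly the argument the paper intends: $S^D$ preserves the pullback because it is a right adjoint, it sends $f$ and $g$ to a fibration and a weak equivalence by Definition~\ref{wfc}, and right properness of $\sSet$ finishes the proof. (The worry you raise at the outset about $S^D$ not preserving the pullback is a non-issue, as you yourself then observe.)
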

\begin{proof}
This follows from the right properness of the standard model structure on $\sSet$.
\end{proof}

\begin{lem}\label{copfib}
Fibrant diffeological spaces are closed under coproducts in $\Diff$,
and if $X$ is fibrant, then so is each path component.
\end{lem}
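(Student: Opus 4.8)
The plan is to verify fibrancy directly via the lifting criterion recorded at the start of Subsection~\ref{ss:cofibrant}: a diffeological space $Z$ is fibrant if and only if, for every $n \in \Z^+$, each smooth map $\Lambda^n \to Z$ extends along the canonical map $\Lambda^n \to \R^n$ to a smooth map $\R^n \to Z$ (the target $\R^0$ in the defining lifting problem imposes nothing further). Two elementary facts drive both halves of the statement. First, $\Lambda^n$ and $\R^n$ are each smoothly path-connected --- indeed linearly contractible to the origin, as noted after Lemma~\ref{shtvssimhtp}. Second, for a coproduct $\coprod_j X_j$ in $\Diff$, a plot with image in a single summand $X_j$ is already a plot of $X_j$ (this follows from the description of a generated diffeology together with the sheaf axiom), so each $X_j$ is a diffeological subspace of the coproduct, and a smooth map from any smoothly path-connected space into $\coprod_j X_j$ factors through one summand.

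For the coproduct statement, let $\{X_j\}$ be fibrant and let $f : \Lambda^n \to \coprod_j X_j$ be smooth, $n \in \Z^+$. By connectedness of $\Lambda^n$, $f$ factors as $f = \iota_{j_0} \circ g$ for a unique index $j_0$ and a smooth map $g : \Lambda^n \to X_{j_0}$. Fibrancy of $X_{j_0}$ extends $g$ to a smooth $\tilde g : \R^n \to X_{j_0}$, and then $\iota_{j_0}\circ\tilde g : \R^n \to \coprod_j X_j$ is a smooth extension of $f$; hence $\coprod_j X_j$ is fibrant. (Alternatively: since every $\A^m$ is connected, $S^D$ carries coproducts to coproducts, and a coproduct of Kan complexes is Kan.)

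For the path-component statement, let $X$ be fibrant, let $X_0 \subseteq X$ be a smooth path component with the sub-diffeology, and let $f : \Lambda^n \to X_0$ be smooth, $n \in \Z^+$. Postcomposing with the smooth inclusion $X_0 \hookrightarrow X$ and using fibrancy of $X$ yields a smooth extension $\tilde f : \R^n \to X$. Because $\R^n$ is smoothly path-connected and $\tilde f$ takes at least one value in $X_0$ (namely on the nonempty image of $\Lambda^n$), we get $\tilde f(\R^n) \subseteq X_0$. Thus $\tilde f$ corestricts to a map $\R^n \to X_0$, which is again smooth: for any plot $p$ of $\R^n$, $\tilde f \circ p$ is a plot of $X$ with image in $X_0$, hence a plot of $X_0$ by definition of the sub-diffeology. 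This corestriction is the required extension of $f$, so $X_0$ is fibrant.

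I do not anticipate a substantial obstacle. The only points needing care are the diffeological bookkeeping --- using the correct diffeology on a summand (its own, which agrees with the sub-diffeology it inherits from the coproduct) or on a path component (the sub-diffeology from $X$), and checking that corestricting a smooth map to a subset containing its image stays smooth --- together with the connectedness observations that prevent a smooth map out of $\Lambda^n$ (first part) or $\R^n$ (second part) from meeting two distinct path components of the target.
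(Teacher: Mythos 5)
Your proof is correct and is essentially the paper's argument written out in full: the paper's entire proof is the one-line observation that $D(\Lambda^n)$ and $D(\R^n)$ are connected, which is exactly the connectedness fact you use to force a smooth map out of $\Lambda^n$ into a single summand and a lift out of $\R^n$ into a single path component. The additional bookkeeping you supply (summands as diffeological subspaces of the coproduct, smoothness of the corestriction via the sub-diffeology) is accurate and fills in what the paper leaves implicit.
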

\begin{proof}
This is because both $D(\Lambda^n)$ and $D(\R^n)$ are connected.
\end{proof}

\begin{prop}
The class of fibrations in $\Diff$ is closed under
isomorphisms, pullbacks, smooth retracts and finite compositions.
\end{prop}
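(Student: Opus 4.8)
The plan is to reduce each of the four closure properties to the corresponding fact about Kan fibrations in $\sSet$, using that by Definition~\ref{wfc} a map $f$ in $\Diff$ is a fibration precisely when $S^D(f)$ is a Kan fibration, together with the fact that $S^D$ is a functor which, being a right adjoint (Definition~\ref{adjoint}), preserves all limits and in particular pullbacks.

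First, closure under isomorphisms is immediate: $S^D$ sends an isomorphism in $\Diff$ to an isomorphism of simplicial sets, and isomorphisms are Kan fibrations. For finite compositions, if $f$ and $g$ are composable fibrations, then $S^D(g\circ f)=S^D(g)\circ S^D(f)$ is a composite of Kan fibrations, hence a Kan fibration, so $g\circ f$ is a fibration. For smooth retracts, applying the functor $S^D$ to a retract diagram exhibiting $g$ as a retract of a fibration $f$ (in the arrow category) exhibits $S^D(g)$ as a retract of the Kan fibration $S^D(f)$; since Kan fibrations in $\sSet$ are closed under retracts, $S^D(g)$ is a Kan fibration, so $g$ is a fibration.

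The one point worth spelling out is closure under pullbacks. Given a pullback square in $\Diff$ with $f\colon X\to Y$ a fibration and $W=Z\times_Y X$, the functor $S^D$ carries this square to a pullback square in $\sSet$ (right adjoints preserve pullbacks), so $S^D(W\to Z)$ is the pullback of the Kan fibration $S^D(f)$ along $S^D(g)$; since Kan fibrations in $\sSet$ are closed under pullback, $W\to Z$ is a fibration in $\Diff$. There is no genuine obstacle here: everything is formal once one observes that $S^D$ preserves pullbacks. (Alternatively, one can argue entirely within $\Diff$: by the remarks following Definition~\ref{wfc}, a map in $\Diff$ is a fibration if and only if it has the right lifting property with respect to the set $\{\Lambda^n\to\R^n \mid n\in\Z^+\}$, and any class of maps defined by a right lifting property against a fixed set of maps is automatically closed under isomorphisms, pullbacks, retracts and compositions.)
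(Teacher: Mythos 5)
Your proof is correct and is exactly the argument the paper intends: the paper's own proof is just the phrase ``This is formal,'' and your write-up (transport along the right adjoint $S^D$, which preserves pullbacks, retract diagrams and composites, combined with the corresponding closure properties of Kan fibrations, or alternatively the right-lifting-property characterization against $\{\Lambda^n \to \R^n\}$) is the standard way to fill in that formality.
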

\begin{proof}
This is formal.
\end{proof}

As an immediate consequence of this proposition, we have

\begin{cor}
Let $f:X \ra Y$ be a fibration in $\Diff$.
Then every fiber of $f$ is fibrant, that is,
for any $y \in Y$, $f^{-1}(y)$ with the sub-diffeology from $X$ is fibrant.
\end{cor}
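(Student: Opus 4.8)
The plan is to realize $f^{-1}(y)$ as a pullback of $f$ and then quote the preceding proposition, which says that fibrations in $\Diff$ are stable under pullback.

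First I would note that for a point $y \in Y$ the inclusion $\{y\} \hookrightarrow Y$ is a smooth map $\R^0 \ra Y$, and I would form the pullback of $f : X \ra Y$ along it in $\Diff$. Its underlying set is $f^{-1}(y)$, and a map $U \ra \R^0 \times_Y X$ is a plot exactly when its two components $U \ra \R^0$ and $U \ra X$ are plots; the first condition is vacuous, so the pullback diffeology on $f^{-1}(y)$ is precisely the sub-diffeology inherited from $X$, using the descriptions of products and subspaces recalled in Section~\ref{se:basics}. Thus there is a pullback square
\[
\xymatrix{f^{-1}(y) \ar[r] \ar[d] & X \ar[d]^f \\ \R^0 \ar[r]_-{y} & Y}
\]
in $\Diff$ in which $f^{-1}(y)$ carries the sub-diffeology from $X$.

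Then I would apply the proposition above: since the class of fibrations in $\Diff$ is closed under pullbacks, the left-hand vertical map $f^{-1}(y) \ra \R^0$ is a fibration. As $\R^0$ is a terminal object of $\Diff$, this is the unique map from $f^{-1}(y)$ to a point, and by definition a diffeological space is fibrant exactly when that map is a fibration; hence $f^{-1}(y)$ is fibrant.

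There is essentially no obstacle here: the only point needing (routine) verification is that the categorical pullback computes the sub-diffeology, which is immediate. Alternatively, one could argue directly on the simplicial side, noting that $S^D$ preserves pullbacks, so that $S^D(f^{-1}(y))$ is the fiber of the Kan fibration $S^D(f)$ over the vertex $y$ and is therefore a Kan complex.
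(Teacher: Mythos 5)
Your proof is correct and is exactly the argument the paper intends: the corollary is stated as an immediate consequence of the preceding proposition, obtained by pulling $f$ back along $\R^0 \to Y$ and noting that the pullback is the fiber with the sub-diffeology. Your verification that the categorical pullback carries the sub-diffeology is the only (routine) detail the paper leaves implicit.
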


\begin{prop}\label{diffbundfibration}
Any diffeological bundle with fibrant fiber is a fibration.
\end{prop}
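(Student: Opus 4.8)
The plan is to check directly that a diffeological bundle $f : X \to Y$ with fibrant fiber $F$ has the right lifting property against the maps $\Lambda^n \to \R^n$ for every $n \in \Z^+$, which by the discussion at the beginning of Subsection~\ref{ss:cofibrant} characterizes the fibrations in $\Diff$. Fix $n \geq 1$, write $j : \Lambda^n \to \R^n$ for the canonical map (recall from Remark~\ref{coeqvssub} that $\Lambda^n$ carries the coequalizer diffeology), and suppose we are given a commutative square
\[
\xymatrix{
  \Lambda^n \ar[r]^a \ar[d]_j & X \ar[d]^f \\
  \R^n \ar[r]_b \ar@{-->}[ur] & Y.
}
\]
I must produce a smooth diagonal $\R^n \to X$.

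The crucial point is that $b$ is a global plot of $Y$, so by Proposition~\ref{pr:global-plot} the pullback $P := \R^n \times_Y X$ is trivial of fiber type $F$: choose a diffeomorphism $h : P \to F \times \R^n$ over $\R^n$, and let $b' : P \to X$ and $f' : P \to \R^n$ be the two projections out of the pullback, so that $f \circ b' = b \circ f'$ and $pr_2 \circ h = f'$. The universal property of the pullback converts the pair $(j, a)$ into a smooth map $a' : \Lambda^n \to P$ with $f' \circ a' = j$ and $b' \circ a' = a$. Since $pr_2 \circ h \circ a' = f' \circ a' = j$, the map $h \circ a' : \Lambda^n \to F \times \R^n$ has components $(g, j)$, where $g := pr_1 \circ h \circ a' : \Lambda^n \to F$.

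Now I would invoke the hypothesis that $F$ is fibrant. The map $j$ is the diffeological realization of a horn inclusion $\Lambda^n_k \hookrightarrow \Delta^n$, which is a trivial cofibration in $\sSet$, and $F \to \R^0$ is a fibration in $\Diff$; hence, by the observation in Subsection~\ref{ss:cofibrant} that the diffeological realization of a trivial cofibration in $\sSet$ lifts against every fibration in $\Diff$, the map $g$ extends along $j$ to a smooth map $G : \R^n \to F$ with $G \circ j = g$. Define $\ell : \R^n \to X$ by
\[
  \ell = b' \circ h^{-1} \circ (G, \id_{\R^n}),
\]
which is smooth as a composite of smooth maps. Using $f' \circ h^{-1} = pr_2$ together with $f \circ b' = b \circ f'$ gives $f \circ \ell = b \circ pr_2 \circ (G, \id_{\R^n}) = b$, and using $(G, \id_{\R^n}) \circ j = (g, j) = h \circ a'$ gives $\ell \circ j = b' \circ a' = a$. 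Thus $\ell$ is the required filler, and $f$ is a fibration.

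The one step that deserves care — and really the only non-formal point — is the appeal to fibrancy: one must keep in mind that ``extending $g$ along $j : \Lambda^n \to \R^n$'' is precisely the horn-filling condition for the Kan complex $S^D(F)$, transported through the adjunction $|\blank|_D \dashv S^D$ and the identifications $|\Lambda^n_k|_D \cong \Lambda^n$ and $|\Delta^n|_D \cong \R^n$; in particular it is a statement about the coequalizer diffeology on $\Lambda^n$, not the subspace diffeology. Everything else is routine manipulation with the pullback square and the trivialization $h$.
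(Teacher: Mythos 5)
Your proof is correct and follows essentially the same route as the paper's: pull back $f$ along the global plot $\R^n \to Y$, use Proposition~\ref{pr:global-plot} to trivialize the pullback as $F \times \R^n$, extend the $F$-component of the induced map $\Lambda^n \to F \times \R^n$ using fibrancy of $F$, and assemble the lift from the trivialization. The only difference is notational (you keep the trivializing diffeomorphism $h$ explicit), and your verification of the two commuting triangles matches the paper's.
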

\begin{proof}
Let $f:X \ra Y$ be a diffeological bundle with fibrant fiber $F$.
Given any commutative diagram in $\Diff$
\[
\xymatrix{\Lambda^n \ar[d]_a \ar[r]^b & X \ar[d]^f \\ \R^n \ar[r]_c & Y,}
\]
we have the following pullback diagram in $\Diff$
\[
\xymatrix{\R^n \times F \ar[d]_{\pi_1} \ar[r]^-{d} & X \ar[d]^f \\ \R^n \ar[r]_c & Y,}
\]
where we have used Proposition~\ref{pr:global-plot} to see that the pullback is trivial.
Therefore, we have the following commutative diagram:
\[
\xymatrix{\Lambda^n \ar@/_/[ddr]_a \ar@/^/[drr]^b \ar@{.>}[dr]|-{(a,e)} \\
& \R^n \times F \ar[d]_{\pi_1} \ar[r]^{~~~d} & X \ar[d]^f \\ & \R^n \ar[r]_c & Y.}
\]
Let $g:\R^n \ra F$ be any smooth map and consider the smooth section $(1,g):\R^n \ra \R^n \times F$.
Then $f \circ d \circ (1,g) \circ \pi_1 = c \circ \pi_1 \circ (1,g) \circ \pi_1 = c \circ \pi_1$,
and by the surjectivity of $\pi_1$, we have the following commutative triangle
\[
\xymatrix{& X \ar[d]^f \\ \R^n \ar[ur]^{d \circ (1,g)} \ar[r]_c & Y.}
\]
We also want the triangle
\[
\xymatrix{\Lambda^n \ar[r]^b \ar[d]_a & X \\ \R^n, \ar[ur]_{d \circ (1,g)}}
\]
to commute, which requires us to pick the smooth map $g$ carefully.
Since $F$ is fibrant, we choose $g$ to be a lifting of
\[
\xymatrix{\Lambda^n \ar[d]_a \ar[r]^e & F \\ \R^n. \ar@{.>}[ur]_g}
\]
Then, for any $x \in \Lambda^n$,
we have $d \circ (1,g) \circ a(x) = d(a(x),g \circ a(x)) = d(a(x),e(x))=(d \circ (a,e))(x)=b(x)$.
\end{proof}

\begin{ex}\label{noncof}
Not every diffeological space is cofibrant.
For example, the irrational torus $T^2_\theta$ is not cofibrant.
To see this, first recall that the quotient map $T^2 \ra T^2_\theta$ is a trivial fibration.
But we saw in Example~\ref{irrtorus} that the identity map $T^2_\theta \ra T^2_\theta$
has no smooth lift to a map $T^2_\theta \to T^2$.
See~\cite[Example~6.8(2)]{W2} for an alternative proof of this example.
\end{ex}

\begin{prop}\label{diffgrpfib}
Every diffeological group is fibrant.
\end{prop}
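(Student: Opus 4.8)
The plan is to recognize $S^D(G)$ as a simplicial group and then invoke the classical fact that every simplicial group is a Kan complex.

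First I would use that $S^D$, being a right adjoint (Definition~\ref{adjoint}), preserves finite products: the canonical maps $S^D(G \times G) \to S^D(G) \times S^D(G)$ and $S^D(\R^0) \to \Delta^0$ are isomorphisms, as one sees immediately from $S^D_n(X) = C^\infty(\A^n, X)$ together with the facts that $G \times G$ is the categorical product and $\R^0$ is terminal. Composing $S^D$ of the structure maps of $G$ (multiplication $G \times G \to G$, inverse $G \to G$, unit $\R^0 \to G$) with these isomorphisms makes $S^D(G)$ into a group object in $\sSet$, i.e.\ a simplicial group; concretely, each $S^D_n(G) = C^\infty(\A^n, G)$ is a group under pointwise multiplication, and all face and degeneracy maps are group homomorphisms.

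Then I would appeal to Moore's theorem that every simplicial group is fibrant (see, e.g., \cite{GJ}), which gives at once that $S^D(G)$ is a Kan complex, that is, that $G$ is fibrant. There is essentially no obstacle here, the content being just the (standard) observation that a right adjoint preserves products. If one prefers to avoid citing Moore's theorem, one can instead argue directly: unwinding the adjunction, fibrancy of $G$ means that every smooth map $\Lambda^n \to G$ extends to a smooth map $\R^n \to G$ (where $\Lambda^n = |\Lambda^n_k|_D$), and the usual inductive horn-filling formula for a simplicial group produces such an extension using only the multiplication and inversion of $G$, which are smooth, so the extension is automatically smooth. The only minor point to keep in mind is that $\Lambda^n$ carries the coequalizer diffeology rather than the sub-diffeology, but this is precisely what ensures that a compatible family of smooth maps on the faces assembles to a smooth map on $\Lambda^n$, so it causes no difficulty.
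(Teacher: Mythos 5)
Your proposal is correct and follows essentially the same route as the paper: the paper also observes that the right adjoint $S^D$ sends group objects to group objects, so $S^D(G)$ is a simplicial group, and then cites Moore's lemma that every simplicial group is a Kan complex. The extra detail you give about product preservation and the pointwise group structure on $C^\infty(\A^n,G)$ is a harmless elaboration of the same argument.
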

\begin{proof}
The right adjoint of an adjoint pair between two categories with finite products
always sends group objects to group objects.
The group objects in $\Diff$ and in $\sSet$ are precisely
diffeological groups and simplicial groups, respectively,
and Moore's lemma (see, for example,~\cite[Lemma I.3.4]{GJ}) says that every simplicial group is fibrant in $\sSet$.
Hence the result follows.
\end{proof}

\begin{ex}
Here is a more concrete way to see that every diffeological abelian group $A$ is fibrant.
Given a solid diagram
\[
\xymatrix{\Lambda^n \ar[r]^F \ar[d] & A \\ \R^n \ar@{.>}[ur]_{\tilde{F}}}
\]
in $\Diff$, define the extension $\tilde{F}$ directly as follows.
For any $0 \leq k<n$ and $1 \leq i_1<\cdots<i_k \leq n$,
write $P_{i_1,\ldots,i_k}:\R^n \ra \Lambda^n$ for the orthogonal projection
onto the subspace where $x_i = 0$ for all $i \not\in \{ i_1, \ldots, i_k \}$.
When $k=0$, this is the constant map $\R^n \ra \Lambda^n$ sending everything to $0$.
All of these projections are clearly smooth.
Then the smooth map
\[
\tilde{F}=\sum_{k=0}^{n-1} \sum_{1 \leq i_1<\cdots<i_k \leq n} (-1)^{n-k+1} F \circ P_{i_1,\ldots,i_k}
\]
is an extension of $F$.
\end{ex}

\begin{ex}\
\begin{enumerate}
\item Every Lie group is fibrant.

\item Every irrational torus is fibrant.

\item Let $G$ be a diffeological group.
Then $C^\infty(X,G)$ is also a diffeological group for any diffeological space $X$,
and is therefore fibrant.
Similarly, for any $x_0 \in X$, the pointed mapping space $C^{\infty}((X,x_0), (G,e))$
is a diffeological group when given the sub-diffeology, and is thus fibrant.
Since there is a diffeomorphism $(G,e) \cong (G, g_0)$ for any $g_0 \in G$,
the same is true with $e$ replaced by $g_0$.
\end{enumerate}
\end{ex}

\begin{de}
Let $G$ be a diffeological group and let $H$ be a subgroup of $G$.
Then the set $G/H$ of left (or right) cosets, with the quotient diffeology,
is called a \dfn{homogeneous} diffeological space.
\end{de}

\begin{thm}\label{homogeneousfibrant}
Every homogeneous diffeological space is fibrant.
\end{thm}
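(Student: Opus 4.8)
The plan is to reduce the fibrancy of a homogeneous space $G/H$ to the fact that $G$ itself is fibrant (Proposition~\ref{diffgrpfib}), by exhibiting $G \to G/H$ as a diffeological bundle and then applying Proposition~\ref{diffbundfibration}. Concretely: by Proposition~\ref{pr:diffgroup-diffbundle}, the quotient map $q: G \to G/H$ is a diffeological bundle of fiber type $H$, where $H$ carries the sub-diffeology. By Proposition~\ref{diffgrpfib}, $H$, being a diffeological group (a subgroup of a diffeological group with the sub-diffeology is again a diffeological group, as noted in the examples in Section~\ref{se:basics}), is fibrant. Hence by Proposition~\ref{diffbundfibration}, $q: G \to G/H$ is a fibration in $\Diff$.

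Now I must get from ``$q$ is a fibration'' to ``$G/H$ is fibrant.'' This is where a small argument is needed: in a genuine model category, if $X \to Y$ is a fibration and $X$ is fibrant then $Y$ is fibrant, because the composite $X \to Y \to *$ is a fibration and fibrations compose; but then one would still need $G \to *$ to be a fibration, i.e.\ $G$ fibrant, which is exactly Proposition~\ref{diffgrpfib}. Since our setup is not known to be a model category, I would argue directly from the lifting property. A map in $\Diff$ is a fibration iff it has the right lifting property with respect to $\Lambda^n \to \R^n$ for all $n \in \Z^+$, and a space $Z$ is fibrant iff $Z \to \R^0$ has this lifting property. Given a commutative square with $\Lambda^n \to G/H$ on top and $\Lambda^n \to \R^n$ on the left, I first note that $\R^0 = G/H \times_{\R^0} \R^0$ trivially, but more usefully: the composite $G \xrightarrow{q} G/H \to \R^0$ is a fibration if both $q$ and $G/H \to \R^0$ are — which is circular — so instead I lift in two stages. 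Since $G$ is fibrant, I can lift $\Lambda^n \to \R^n$ against $G \to \R^0$ after first lifting the given map $\Lambda^n \to G/H$ through $q$ to a map $\Lambda^n \to G$ (using that $q$ is a fibration, or more simply that $q$ is a diffeological bundle so the pullback along $\Lambda^n \to G/H \hookrightarrow$ factors through a plot and is locally trivial).

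A cleaner route, which I would adopt, avoids the circularity: observe that $G \to \R^0$ factors as $G \xrightarrow{q} G/H \xrightarrow{} \R^0$, and that $q$ is a fibration by the above. It therefore suffices to show: if $p: E \to B$ is a fibration in $\Diff$ and $E$ is fibrant, then $B$ is fibrant. Given a square $\Lambda^n \to B$, $\Lambda^n \to \R^n$, I use that $\Lambda^n$ is cofibrant — in fact $\Lambda^n \to \R^n$ is the diffeological realization of the trivial cofibration $\Lambda_0^n \hookrightarrow \Delta^n$, so it has the left lifting property against the fibration $p: E \to B$. Hence there is a smooth lift $\ell: \Lambda^n \to E$ with $p\ell = (\Lambda^n \to B)$, fitting into the square with left map $\Lambda^n \to \R^n$, and (since $\Lambda^n \to \R^n$ is a \emph{trivial} cofibration's realization) also a lift $\widetilde{\ell}: \R^n \to E$ extending $\ell$ along $\Lambda^n \to \R^n$, because $E$ is fibrant means $E \to \R^0$ has the right lifting property against $\Lambda^n \to \R^n$. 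Then $p \widetilde{\ell}: \R^n \to B$ is the desired extension, since it agrees with the given $\R^n \to B$? — no, it need not, so I instead only need $p\widetilde\ell$ to extend $\Lambda^n \to B$, which it does by construction, and any extension suffices for fibrancy. The main obstacle, then, is this bookkeeping with lifting properties: making sure that ``$\Lambda^n \to \R^n$ has the left lifting property against all fibrations'' — which is exactly the observation recorded just before Proposition~\ref{pr:cofs}, that $\Lambda^n \to \R^n$ is the diffeological realization of a trivial cofibration — is correctly invoked, and that ``fibrant'' is being used in the form ``RLP against $\Lambda^n \to \R^n$.'' With that in hand the argument is short.

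Summarizing the steps in order: (1) $H$ with the sub-diffeology is a diffeological group, hence fibrant by Proposition~\ref{diffgrpfib}; (2) $q: G \to G/H$ is a diffeological bundle of fiber type $H$ by Proposition~\ref{pr:diffgroup-diffbundle}, hence a fibration by Proposition~\ref{diffbundfibration}; (3) $G$ is fibrant by Proposition~\ref{diffgrpfib}; (4) the base of a fibration with fibrant total space is fibrant, proved by the two-step lifting above, using that $\Lambda^n \to \R^n$ is the realization of a trivial cofibration. I expect step (4) to be the only non-formal point, and even there the work is entirely the standard diagram-chase; everything else is direct citation.
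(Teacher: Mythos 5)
Your overall strategy is exactly the paper's: $q\colon G \to G/H$ is a diffeological bundle with fibrant fiber $H$, hence a fibration; $G$ is fibrant; one lifts a horn $b\colon \Lambda^n \to G/H$ up to $G$, extends over $\R^n$ using fibrancy of $G$, and pushes back down with $q$. Your steps (1)--(3) are correct and are direct citations. The gap is in step (4), specifically in how you produce the lift $\ell\colon \Lambda^n \to E$ of the given map $\Lambda^n \to B$. You justify its existence by saying that $\Lambda^n \to \R^n$ is the realization of a trivial cofibration and therefore has the left lifting property against the fibration $p$. That lifting property lets you fill a square whose top edge is an \emph{already given} map $\Lambda^n \to E$ and whose bottom edge is a map $\R^n \to B$; it does not manufacture a map $\Lambda^n \to E$ out of a map $\Lambda^n \to B$. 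Likewise ``$\Lambda^n$ is cofibrant'' only gives the left lifting property of $\emptyset \to \Lambda^n$ against \emph{trivial} fibrations, and $p$ is merely a fibration. So as written, $\ell$ has not been constructed, and you flag this point yourself (``this is where a small argument is needed'') without supplying it.

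The correct mechanism, which is what the paper does, is: choose $e_0 \in E$ with $p(e_0) = b(0,\ldots,0)$ --- this uses that $p$ is \emph{surjective}, which holds for the quotient map $G \to G/H$ but is not automatic for a fibration --- and then observe that $\R^0 \to \Lambda^n$ is the diffeological realization of the trivial cofibration $\Delta^0 \hookrightarrow \Lambda^n_k$ (a vertex into a horn), so it has the left lifting property against the fibration $p$; the square with top edge $e_0$ and bottom edge $b$ then yields $\ell$. Relatedly, your intermediate lemma ``the base of a fibration with fibrant total space is fibrant'' is false without a surjectivity hypothesis: $\emptyset \to B$ is always a fibration with fibrant source, yet $B$ need not be fibrant. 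With surjectivity added it becomes exactly the remark the paper records after the theorem, and the remainder of your argument (extend $\ell$ over $\R^n$ by fibrancy of $E$, then compose with $p$; any extension suffices since the bottom of the square is $\R^n \to \R^0$) is correct.
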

\begin{proof}
Given $b:\Lambda^n \ra G/H$,
let $a:\R^0 \ra G$ be defined by $a(0) \in \pi^{-1}(b(0,\ldots,0))$,
where $\pi:G \ra G/H$ is the quotient map.
Then we have the following smooth liftings:
\[
\xymatrix{\R^0 \ar[r]^a \ar[d] & G \ar[d]^\pi \\ \Lambda^n \ar[r]^(.4)b \ar@{.>}[ur]^{\alpha} \ar[d] & G/H
\\ \R^n. \ar@{.>}[uur]^(.32)*!/^-2.5pt/{\labelstyle \beta} \ar@{.>}[ur]_{\gamma}}
\]
The lifting $\alpha$ exists because
$\R^0 \ra \Lambda^n$ is the diffeological realization of a trivial cofibration in $\sSet$
and $\pi$ is a fibration in $\Diff$ by Proposition~\ref{diffbundfibration}.
The lifting $\beta$ exists because $G$ is fibrant.
And $\gamma=\pi \circ \beta$ is easily seen to be the required lifting.
\end{proof}

\begin{rem}
The proof of this theorem shows that
if a smooth map $X \ra Y$ is a fibration in $\Diff$ and a surjective set map,
with $X$ fibrant,
then $Y$ is also fibrant.
\end{rem}

\begin{cor}\label{mfdfibrant}
Every smooth manifold is fibrant.
\end{cor}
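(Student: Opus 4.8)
The plan is to reduce to the case of a connected manifold and then exhibit such a manifold as a homogeneous diffeological space, so that Theorem~\ref{homogeneousfibrant} applies. First I would invoke Lemma~\ref{copfib}: since fibrant diffeological spaces are closed under coproducts and any smooth manifold $M$ is the coproduct of its connected components, it suffices to treat a connected $M$.

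So suppose $M$ is connected, fix $x \in M$, let $G = \Diff(M)$ be the diffeomorphism group with its diffeological group structure (the last example of Section~\ref{se:basics}), and let $H = \stab(x) = \{\phi \in G \mid \phi(x) = x\}$, a subgroup which, with the sub-diffeology, is again a diffeological group. The evaluation map $\ev_x : G \to M$, $\phi \mapsto \phi(x)$, is smooth, and I would argue it is surjective using the classical homogeneity of connected manifolds: for any $y \in M$ there is a diffeomorphism of $M$ isotopic to the identity carrying $x$ to $y$, obtained by integrating a compactly supported vector field along a path from $x$ to $y$. Then $\ev_x$ descends to a smooth bijection $G/H \to M$, where $G/H$ carries the quotient diffeology.

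The key step — and the one I expect to be the main obstacle — is to show that this bijection is a diffeomorphism, i.e.\ that $M$ carries the quotient diffeology determined by $\ev_x$. One inclusion of diffeologies is automatic since $\ev_x$ is smooth; for the other I must show that every plot $p : U \to M$ lifts, locally on $U$, through $\ev_x$. Given $u_0 \in U$, I would use surjectivity of $\ev_x$ and left translation in $G$ to reduce to the case $p(u_0) = x$, pick a chart $\varphi$ at $x$ with $\varphi(x) = 0$, shrink $U$ so that $p(U)$ lies in a small ball in the chart, fix a bump function $\beta$ that is $1$ near $0$ and compactly supported in the chart, and set $T_v(y) = y + \beta(y)\,v$; for $\|v\|$ small this is a diffeomorphism of $\R^n$ equal to translation by $v$ near $0$, with $(v,y) \mapsto T_v(y)$ and $(v,y) \mapsto T_v^{-1}(y)$ jointly smooth. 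Transporting $T_v$ through $\varphi$ and extending by the identity gives $\Phi_v \in \Diff(M)$, and then $u \mapsto \Phi_{\varphi(p(u))}$ is a plot of $\Diff(M)$ lifting $p$ near $u_0$. (Alternatively, this realization of a manifold as $\Diff(M)/\stab(x)$ is carried out in~\cite{I2} and could simply be cited.)

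With $M \cong \Diff(M)/\stab(x)$ identified as a homogeneous diffeological space, Theorem~\ref{homogeneousfibrant} finishes the proof. I note that the last step can also be phrased via the remark following Theorem~\ref{homogeneousfibrant}: by Proposition~\ref{pr:diffgroup-diffbundle} the map $\ev_x$ is a diffeological bundle with fiber $\stab(x)$, which is fibrant, so $\ev_x$ is a fibration by Proposition~\ref{diffbundfibration}; since $\Diff(M)$ is fibrant by Proposition~\ref{diffgrpfib} and $\ev_x$ is surjective, $M$ is fibrant. Either way, all the real content sits in identifying $M$ with the homogeneous space $\Diff(M)/\stab(x)$ — equivalently, in lifting plots of $M$ to $\Diff(M)$ — which is precisely where the differential topology of $M$ (the existence of compactly supported diffeomorphisms realizing small smooth motions) enters.
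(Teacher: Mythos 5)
Your proof is correct and follows essentially the same route as the paper: reduce to the connected case via Lemma~\ref{copfib}, identify $M$ with the homogeneous diffeological space $\Diff(M)/\stab(M,x)$, and apply Theorem~\ref{homogeneousfibrant}. The only difference is that the paper simply cites Donato's thesis for the diffeomorphism $\Diff(M)/\stab(M,x) \cong M$, whereas you sketch the plot-lifting argument (via compactly supported diffeomorphisms built from a bump function) yourself, and your sketch is correct.
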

\begin{proof}
Use Theorem~\ref{homogeneousfibrant}, Lemma~\ref{copfib}, and the fact that
the homogeneous diffeological space $\Diff(M)/\stab(M,x)$ is diffeomorphic to $M$ (\cite{Do}),
where $M$ is an arbitrary connected smooth manifold, $x \in M$,
and $\stab(M,x)=\{f \in \Diff(M) \mid f(x)=x\}$ is a subgroup of $\Diff(M)$.
\end{proof}

In Corollary~\ref{cofrep} we showed that for every diffeological space $X$
there is a trivial fibration from a cofibrant diffeological space $\tilde{X}$ to $X$.
Thus, if $X$ is fibrant, $\tilde{X}$ is both cofibrant and fibrant.
In particular, if $M$ is a smooth manifold,
then $\tilde{M}$ is both cofibrant and fibrant, and is weakly equivalent to $M$.

\begin{rem}
In Corollary~\ref{cpt}, we prove a general result about fibrancy of function spaces,
which gives a second proof that smooth manifolds are fibrant, as well as a proof
that $C^{\infty}(S^1, M)$ is fibrant for any smooth manifold $M$.
\end{rem}

We conjecture that if $X$ is a cofibrant diffeological space and $Y$ is a fibrant diffeological space,
then $C^\infty(X,Y)$ is fibrant.
We prove the following special case.

\begin{prop}\label{fsfib}
Let $Y$ be a fibrant diffeological space.
Then $C^\infty(\R^m,Y)$ is fibrant for any $m \in \N$.
\end{prop}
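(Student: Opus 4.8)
I want to show $C^\infty(\R^m, Y)$ is fibrant whenever $Y$ is, i.e.\ that $S^D(C^\infty(\R^m,Y))$ is a Kan complex. The natural strategy is to reduce a lifting problem for $C^\infty(\R^m, Y)$ to a lifting problem for $Y$ itself, exploiting the adjunction $\blank \times \R^m : \Diff \rightleftharpoons \Diff : C^\infty(\R^m, \blank)$ coming from cartesian closedness (Theorem~\ref{th:cartesian-closed}). Concretely, a fibrancy problem for $C^\infty(\R^m, Y)$ is a commutative square
\[
\xymatrix{\Lambda^n \ar[r]^-{b} \ar[d]_a & C^\infty(\R^m, Y) \\ \R^n \ar@{.>}[ur] & }
\]
in $\Diff$, and by adjointness this transposes to a smooth map $\tilde b : \Lambda^n \times \R^m \to Y$ together with the requirement to extend it over $\R^n \times \R^m$. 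So the real content is: \emph{if $Y$ is fibrant, then $Y \to \R^0$ has the right lifting property with respect to $\Lambda^n \times \R^m \hookrightarrow \R^n \times \R^m$}, at least when this inclusion is of the right form. Equivalently, I must produce a smooth retraction-like extension, or realize $\Lambda^n \times \R^m \hookrightarrow \R^n \times \R^m$ as (a retract of, or built from) the diffeological realization of a trivial cofibration of simplicial sets, after which the lifting is automatic by the remarks following Proposition~\ref{pr:cofs}.

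First I would recall that $\R^m \cong |\Delta^m|_D$ by Definition~\ref{adjoint} and Example in Subsection~\ref{ss:cofibrant}, and that $\Lambda^n \cong |\Lambda^n_k|_D$, so $\Lambda^n \times \R^m$ is the product of two diffeological realizations. Since $|\blank|_D$ does \emph{not} preserve products (Proposition~\ref{noncomm}), I cannot simply say $\Lambda^n \times \R^m \cong |\Lambda^n_k \times \Delta^m|_D$; however, there is always a natural surjection $|\Lambda^n_k \times \Delta^m|_D \to |\Lambda^n_k|_D \times |\Delta^m|_D = \Lambda^n \times \R^m$, and similarly $|\Delta^n \times \Delta^m|_D \to \R^n \times \R^m$. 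This suggests the cleaner route: rather than fight the product, work directly with $Y$ fibrant and build the extension $\R^n \times \R^m \to Y$ by hand, using that $\R^m$ is smoothly contractible. Pick a smooth contraction $H : \R^m \times \R \to \R^m$ with $H_0 = \mathrm{id}$ and $H_1 = 0$ (constant). Precomposing $\tilde b : \Lambda^n \times \R^m \to Y$ with $\mathrm{id}_{\Lambda^n} \times H$ and reparametrizing, one can deform $\tilde b$ so that its dependence on the $\R^m$ variable is controlled; combined with a fibrancy extension of the ``core'' map $\Lambda^n \times \{0\} \to Y$ over $\R^n \times \{0\}$ (which is exactly one application of fibrancy of $Y$, since $\Lambda^n \to \R^n$ is $|\Lambda^n_k \hookrightarrow \Delta^n|_D$), one should be able to reassemble a smooth map on all of $\R^n \times \R^m$. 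I expect the cleanest packaging is: show that the inclusion $\Lambda^n \times \R^m \hookrightarrow \R^n \times \R^m$ is a smooth retract of $|\Lambda^n_k \times \Delta^m \hookrightarrow \Delta^n \times \Delta^m|_D$ (using that the Cartesian projection maps $|A \times B|_D \to |A|_D \times |B|_D$ admit compatible sections built from the contractibility of the pieces), and that $\Lambda^n_k \times \Delta^m \hookrightarrow \Delta^n \times \Delta^m$ is a trivial cofibration of simplicial sets (it is a monomorphism, and a weak equivalence since both sides are contractible). Then the lift exists by fibrancy of $Y$ together with the retract, and transposing back gives the desired lift for $C^\infty(\R^m, Y)$.

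The main obstacle is precisely the non-commutativity of $|\blank|_D$ with products: one cannot treat $\Lambda^n \times \R^m$ as a single realization, so the heart of the argument is constructing explicit smooth maps relating $\Lambda^n \times \R^m$ and $\R^n \times \R^m$ to honest realizations (or directly extending maps out of them). Executing the retract/section construction requires choosing, for each simplex, a smooth family of barycentric-type coordinates that glue compatibly across faces and across the two factors — essentially the same ``tangential approach to seams'' technique used in the proof of Proposition~\ref{S1cof}, but now parametrized. A possible shortcut that avoids the retract bookkeeping: argue directly that $\Lambda^n \times \R^m \hookrightarrow \R^n \times \R^m$ is a smooth deformation retract \emph{and} that it satisfies the homotopy extension property smoothly, so that any smooth map out of $\Lambda^n \times \R^m$ extends over $\R^n \times \R^m$ after composing with the deformation — but one must check the extension lands in $Y$, which is where fibrancy of $Y$ (not just smooth contractibility of the domain) is genuinely used, since $\Lambda^n \to \R^n$ is not a smooth retract. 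So I expect the final proof to combine: (i) the transposition via cartesian closedness, (ii) a single application of fibrancy of $Y$ to the $\Lambda^n \hookrightarrow \R^n$ core, and (iii) an explicit smooth gluing/retraction argument handling the $\R^m$ factor, with (iii) being the technical crux.
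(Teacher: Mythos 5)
Your opening move is exactly right: transpose the lifting problem through the exponential adjunction, so that fibrancy of $C^\infty(\R^m,Y)$ reduces to showing that $Y \to \R^0$ has the right lifting property with respect to $i \times 1 : \Lambda^n \times \R^m \to \R^n \times \R^m$, and the cleanest way to get that is to exhibit this map as the realization of a trivial cofibration of simplicial sets. But you never actually produce such a trivial cofibration, and the candidate you reach for, $\Lambda^n_k \times \Delta^m \hookrightarrow \Delta^n \times \Delta^m$, is the wrong one: its realization is \emph{not} $\Lambda^n \times \R^m \to \R^n \times \R^m$ (precisely because $|\blank|_D$ fails to preserve products, as you note), and your proposed repairs --- a smooth section of $|\Lambda^n_k \times \Delta^m|_D \to \Lambda^n \times \R^m$ built by ``tangential approach to seams,'' or a hand-built extension using a contraction of $\R^m$ plus a smooth homotopy extension property --- are left entirely unexecuted and are each a substantial construction whose feasibility is far from clear (indeed, the natural comparison maps out of realizations of products can fail even to be injective).

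The missing idea is that one should not model $\Lambda^n \times \R^m \hookrightarrow \R^n \times \R^m$ by a \emph{product} of simplicial sets at all, but by a single $(n+m)$-simplex with $n$ of its faces: the inclusion $\cup_{i=1}^{n} d^i(\Delta^{n+m}) \hookrightarrow \Delta^{n+m}$, a generalized horn and hence a trivial cofibration. Its realization is computed without any product issue: $|\Delta^{n+m}|_D = \A^{n+m} \cong \R^{n+m} = \R^n \times \R^m$, and the source realizes to the coequalizer of the first $n$ coordinate hyperplanes of $\R^{n+m}$ along their intersections. Since $\blank \times \R^m$ is a left adjoint (cartesian closedness), it preserves the coequalizer defining $\Lambda^n$, so this coequalizer is exactly $\Lambda^n \times \R^m$ with the product diffeology. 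With that identification in hand, the lift exists by the remark following Proposition~\ref{pr:cofs} (fibrations lift against realizations of trivial cofibrations), and transposing back finishes the proof --- no retract bookkeeping, no deformation-retract argument, and no separate application of fibrancy to a ``core'' $\Lambda^n \to \R^n$ is needed.
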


More generally, the proof below shows that
if $f:X \ra Y$ is a fibration in $\Diff$, then so is
$f_*:C^\infty(\R^m,X) \ra C^\infty(\R^m,Y)$ for any $m \in \N$.

\begin{proof}
It is easy to see that
for any $n,m \in \N$, $i \times 1:\Lambda^n \times \R^m \ra \R^n \times \R^m$
is the diffeological realization of the (trivial) cofibration
$\cup_{i=1}^n d^i(\Delta^{n+m}) \hookrightarrow \Delta^{n+m}$ in $\sSet$.
Then the result follows by the cartesian closedness of $\Diff$.
\end{proof}

\begin{rem}
One might expect that the above proposition generalizes immediately to the case of $C^\infty(|A|_D,Y)$,
with $A$ an arbitrary simplicial set and $Y$ a fibrant diffeological space.
However, the above proof does not go through in general,
since the diffeological realization does not commute with finite products,
as shown in Proposition~\ref{noncomm}.
\end{rem}

\begin{thm}\label{Dopnfibrant}
Let $X$ be a fibrant diffeological space.
Then every $D$-open subset of $X$ with the sub-diffeology is also fibrant.
\end{thm}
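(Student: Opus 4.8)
The plan is to reduce fibrancy of a $D$-open subset $U \subseteq X$ to a local extension problem and solve it using the fibrancy of $X$ together with a rescaling trick. We must show that every horn $\Lambda^n \to U$ extends to a map $\R^n \to U$. Composing with the inclusion $U \hookrightarrow X$ gives a horn $a : \Lambda^n \to X$, and since $X$ is fibrant this extends to some smooth $\tilde a : \R^n \to X$. The problem is that $\tilde a$ need not land in $U$; it only does so on $\Lambda^n$. So the real content is: given a smooth map $\R^n \to X$ that sends the ``horn'' $\Lambda^n$ into the $D$-open set $U$, modify it (without changing it on $\Lambda^n$, or at least without changing its homotopy class rel the relevant boundary data) to a smooth map that lands entirely in $U$.

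The key idea I would use is that $\Lambda^n$ is smoothly contractible (linearly, to the origin), so $\R^n$ smoothly deformation retracts onto a neighborhood of $\Lambda^n$ in a controlled way; more precisely, I would precompose $\tilde a$ with a smooth self-map $r_s : \R^n \to \R^n$, varying smoothly in a parameter $s$, with $r_0 = \id$ and $r_1$ mapping all of $\R^n$ into an arbitrarily small neighborhood $N$ of $\Lambda^n$, while fixing $\Lambda^n$ pointwise. Because $\tilde a(\Lambda^n) \subseteq U$ and $U$ is $D$-open in $X$, the preimage $\tilde a^{-1}(U)$ is an open (in the $D$-topology, hence in the standard topology on $\R^n$) neighborhood of $\Lambda^n$; choosing the retraction to land inside this preimage, $\tilde a \circ r_1$ is a smooth map $\R^n \to X$ landing entirely in $U$, i.e.\ a smooth map $\R^n \to U$, and it still restricts to $a$ on $\Lambda^n$. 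That is exactly the desired filler. Constructing $r_s$ concretely: $\Lambda^n = \{ x \in \R^n \mid \prod_i x_i = 0 \}$ near the origin looks like a union of coordinate hyperplanes, and one can build $r_1$ by a coordinatewise bump-function construction that pushes points toward the hyperplanes, scaled so the image sits in any prescribed neighborhood.

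The main obstacle I anticipate is verifying that such a smooth retraction $r_s$ onto an arbitrarily small neighborhood of $\Lambda^n$ actually exists with all the required smoothness and ``rel $\Lambda^n$'' properties — $\Lambda^n$ is not a manifold (it has the singular locus where coordinate hyperplanes meet), so one cannot appeal to a tubular neighborhood theorem, and the deformation must be engineered by hand using cut-off functions, being careful that it is genuinely smooth across the singular strata and that it fixes $\Lambda^n$. A clean alternative, which I would pursue if the explicit construction gets unwieldy, is to exploit Theorem~\ref{fib-hg-compare}-style reasoning or the lifting-property formulation directly: express the fibrancy of $U$ as the right lifting property against $\Lambda^n \to \R^n$, and produce the lift by first solving the problem in $X$ and then correcting into $U$ via the openness of $U$ and the fact that $\Lambda^n \hookrightarrow \R^n$ is, up to homotopy, a deformation retract inclusion — so one only needs a homotopy of the filler, rel $\Lambda^n$, into $U$, which the $D$-openness of $U$ supplies since a neighborhood of $\Lambda^n$ already maps into $U$. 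Either way, the crux is a careful neighborhood/retraction argument for the combinatorial object $\Lambda^n$ inside $\R^n$.
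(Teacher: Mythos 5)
Your overall strategy is exactly the paper's: extend the horn into $X$ using fibrancy, observe that the preimage of the $D$-open set $U$ under the extension is an open neighbourhood of $\Lambda^n$ in $\R^n$ (in the standard topology), and then precompose with a smooth self-map of $\R^n$ that fixes $\Lambda^n$ pointwise and carries all of $\R^n$ into that neighbourhood. However, the step you flag as the anticipated obstacle is genuinely the crux, and the construction you sketch for it --- a coordinatewise bump-function deformation that ``pushes points toward the hyperplanes'' --- is problematic: away from $\Lambda^n$ there is no smooth (or even continuous) choice of which hyperplane to push toward, and the nearest-hyperplane map is singular exactly on the strata you are worried about. The paper's resolution is much simpler and exploits the fact that $\Lambda^n$ is a cone: given the open neighbourhood $U'$ of $\Lambda^n$, one first shrinks it to an open $V \subseteq U'$ that is star-shaped about the origin in the sense that $v \in V$ implies $\lambda v \in V$ for all $\lambda \in [0,1]$ (possible since $\Lambda^n$ is closed under such scaling), takes a smooth partition of unity $\{\mu,\nu\}$ subordinate to $\{V,\ \R^n \setminus \Lambda^n\}$, and sets $\hat h(x) = \mu(x)\,x$. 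This is smooth everywhere, equals the identity on $\Lambda^n$ (where $\mu \equiv 1$), sends points of $V$ into $V$ by star-shapedness, and sends points outside $V$ to $0 \in \Lambda^n$; so $\hat h$ maps $\R^n$ into $V \subseteq U'$ and the composite with your $\tilde a$ is the desired filler. Note also that your fallback of correcting the filler only ``up to homotopy rel $\Lambda^n$'' would not suffice on its own: fibrancy is a strict lifting property, so the extension must agree with the given horn on the nose, which is why the retraction must fix $\Lambda^n$ pointwise.
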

\begin{proof}
We write $\Lambda^n_{sub}$ for $\Lambda^n$ with the sub-diffeology from $\R^n$.
Let $U$ be an open neighborhood of $\Lambda^n_{sub}$ in $\R^n$,
and let $Y$ be an arbitrary diffeological space.
We claim that if a smooth map $f: \Lambda^n_{sub} \ra Y$ has a smooth extension $g: U \ra Y$,
then $f$ has a smooth extension $h: \R^n \ra Y$.

Here is the proof of the claim.
Since $\Lambda^n_{sub}$ is the set of all coordinate hyperplanes in $\R^n$,
and $U$ is an open neighborhood of $\Lambda^n_{sub}$ in $\R^n$,
there exists an open neighborhood $V$ of $\Lambda^n_{sub}$ in $\R^n$ with $V \subseteq U$
such that for any $v \in V$, we have $\lambda v \in V$ for any $\lambda \in [0,1]$.
Therefore, $\{V,\, \R^n \setminus \Lambda^n_{sub}\}$ forms an open cover of $\R^n$.
Let $\{\mu,\nu\}$ be a smooth partition of unity subordinate to this covering.
Then $\hat{h}(x)=\mu(x)x$ is a smooth map $\hat{h}:\R^n \ra U$,
whose restriction to $\Lambda^n_{sub}$ is the identity map on $\Lambda^n_{sub}$.
Hence, $h=g \circ \hat{h}:\R^n \ra Y$ is the desired smooth extension of $f$.

Now let $A$ be a $D$-open subset of a fibrant diffeological space $X$.
Then for any smooth map $\alpha:\Lambda^n \ra A$,
we have a smooth extension $\beta:\R^n \ra X$ making the following diagram commutative:
\[
\xymatrix{\Lambda^n \ar[r]^\alpha \ar[d] & A\ \ar@{^{(}->}[r] & X. \\ \R^n \ar[urr]_\beta}
\]
Then $\beta^{-1}(A)$ is an open neighborhood of $\Lambda^n_{sub}$ in $\R^n$,
and $\alpha:\Lambda^n \ra A$ has a smooth lifting $\beta^{-1}(A) \ra A$.
Therefore, $\alpha$ has a smooth extension $\gamma:\R^n \ra A$ by the claim,
which implies the fibrancy of $A$.
\end{proof}

\begin{cor}\label{cpt}
Let $X$ be a diffeological space that is compact under the $D$-topology,
and let $N$ be a smooth manifold.
Then $C^\infty(X,N)$ is a fibrant diffeological space.
Moreover, if $x_0 \in X$ and $n_0 \in N$ are chosen points,
then the pointed mapping space $C^\infty((X,x_0),(N,n_0))$ is fibrant
when given the sub-diffeology.
\end{cor}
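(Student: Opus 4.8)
The plan is to realize $C^\infty(X,N)$ as a smooth retract of a $D$-open subset of a fibrant diffeological space, and then conclude via Theorem~\ref{Dopnfibrant} together with the fact that fibrant diffeological spaces are closed under retracts (apply the proposition that fibrations are closed under smooth retracts to the maps to a point). First I would use the Whitney embedding theorem to realize $N$ as a closed submanifold of some $\R^k$, and choose a tubular neighbourhood $W$ of $N$ in $\R^k$: this $W$ is a $D$-open subset of $\R^k$ and carries a smooth retraction $r\colon W\to N$ with $r\circ j=\id_N$, where $j\colon N\hookrightarrow W$ is the inclusion. Next, since $\R^k$ is an abelian Lie group, $C^\infty(X,\R^k)$ is an abelian diffeological group under pointwise operations: the relevant maps are smooth because $C^\infty(X,-)$, being a right adjoint, preserves finite products, and one then post-composes with the group operations of $\R^k$. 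Hence $C^\infty(X,\R^k)$ is fibrant by Proposition~\ref{diffgrpfib}.

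The crux is to show that $C^\infty(X,W)$, meaning the set of smooth maps $X\to\R^k$ with image in $W$ equipped with the sub-diffeology from $C^\infty(X,\R^k)$ (which one checks coincides with the functional diffeology of smooth maps into the manifold $W$), is a $D$-open subset of $C^\infty(X,\R^k)$; this is the only step where the $D$-compactness of $X$ is needed, and I expect it to be the main obstacle. To prove it, let $p\colon V\to C^\infty(X,\R^k)$ be a plot with $V$ open in $\R^m$, and let $\hat p\colon V\times X\to\R^k$ be its smooth adjoint. Since $\hat p$ is smooth it is continuous for the $D$-topologies, so $\hat p^{-1}(W)$ is open in the $D$-topology of $V\times X$, which equals the product topology $V\times D(X)$ because $V$ is locally compact (this is well known; see, e.g.,~\cite{CSW}). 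As $X$ is $D$-compact, the tube lemma gives that $\{\,v\in V\mid\{v\}\times X\subseteq\hat p^{-1}(W)\,\}=p^{-1}(C^\infty(X,W))$ is open in $V$. Thus $C^\infty(X,W)$ is $D$-open, and hence fibrant by Theorem~\ref{Dopnfibrant}.

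Finally, post-composition with $r$ and with $j$ yields smooth maps $r_*\colon C^\infty(X,W)\to C^\infty(X,N)$ and $j_*\colon C^\infty(X,N)\to C^\infty(X,W)$ with $r_*\circ j_*=\id$, exhibiting $C^\infty(X,N)$ as a smooth retract of the fibrant space $C^\infty(X,W)$; therefore $C^\infty(X,N)$ is fibrant. For the pointed statement, translate the embedding so that $n_0\mapsto 0\in\R^k$. Then $C^\infty((X,x_0),(\R^k,0))$ is a subgroup of $C^\infty(X,\R^k)$ with the sub-diffeology (the kernel of the smooth homomorphism $f\mapsto f(x_0)$), hence a diffeological group, hence fibrant; the subset $C^\infty((X,x_0),(W,n_0))$ is $D$-open in it, being the preimage of the $D$-open set $C^\infty(X,W)$ under the smooth inclusion into $C^\infty(X,\R^k)$, hence fibrant by Theorem~\ref{Dopnfibrant}; and since $r$ and $j$ fix $n_0$, the maps $r_*$ and $j_*$ restrict to show that $C^\infty((X,x_0),(N,n_0))$ is a smooth retract of it, hence fibrant.
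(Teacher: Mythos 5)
Your proposal is correct and follows essentially the same route as the paper: embed $N$ in Euclidean space, exhibit $C^\infty(X,N)$ as a smooth retract of $C^\infty(X,W)$ for a tubular neighbourhood $W$, observe that $C^\infty(X,\R^k)$ is a diffeological group hence fibrant, and use $D$-compactness of $X$ to see that $C^\infty(X,W)$ is $D$-open so that Theorem~\ref{Dopnfibrant} applies. The only difference is that you prove the $D$-openness of $C^\infty(X,W)$ directly via the tube lemma, whereas the paper cites~\cite[Proposition~4.2]{CSW} for this fact.
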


\begin{proof}
Let $N \ra \R^n$ be an embedding, and
let $U$ be an open tubular neighborhood of $N$ in $\R^n$,
so that the inclusion $i:N \ra U$ has a smooth retract $r:U \ra N$.
Then the composite
\[
\xymatrix{C^\infty(X,N) \ar[r]^{i_*} & C^\infty(X,U) \ar[r]^{r_*} & C^\infty(X,N)}
\]
is the identity map. That is, $C^\infty(X,N)$ is a smooth retract of $C^\infty(X,U)$.
To prove that $C^\infty(X,N)$ is fibrant, it is enough to prove that $C^\infty(X,U)$ is fibrant.
Since $X$ is compact, $C^\infty(X,U)$ is $D$-open in $C^\infty(X,\R^n)$; see~\cite[Proposition~4.2]{CSW}.
Note that $C^\infty(X,\R^n)$ is a diffeological group, hence fibrant.
By Theorem~\ref{Dopnfibrant}, $C^\infty(X,U)$ is fibrant.
The argument in the pointed case is similar.
\end{proof}

In particular, when $X$ is a point,
this corollary implies that every smooth manifold is fibrant.
This is the second proof of this fact.
Also, this corollary shows that the free loop space $C^\infty(S^1,N)$ 
and the based loop space $C^\infty((S^1,s_0),(N,n_0))$ 
of a smooth manifold are fibrant.

\begin{ex}
Let $X$ be a topological space.
Then $C(X)$ is a fibrant diffeological space,
since $D(\Lambda^n) \ra D(\R^n)$ has a retract in $\Top$.
This also follows from the proof of Proposition~\ref{3adjoint1}.
However, if $Y$ is a diffeological space,
then the natural map $Y \ra C(D(Y))$ is not always a weak equivalence in $\Diff$.
$Y=T^2_\theta$, the irrational torus of slope $\theta$, is such an example.
\end{ex}

Not every diffeological space is fibrant:

\begin{ex}\label{nonfib1}
$\Lambda^n$ is not fibrant for any $n \geq 2$,
since the natural injective map $\Lambda^n \ra \R^n$,
which is a trivial cofibration,
does not have a smooth retraction $\R^n \ra \Lambda^n$.
This follows immediately from the definition of the coequalizer diffeology on $\Lambda^n$.

Note that the inclusion map $\Lambda^n_{sub} \to \R^n$ is also a cofibration
and in fact has the left lifting property with respect to all fibrations.
This follows immediately from the fact that $\Lambda^n \to \Lambda^n_{sub} \to \R^n$
has this lifting property, where the first map is the natural bijection,
which is in particular an epimorphism.
Therefore, if $\Lambda^n_{sub}$ were fibrant,
then the inclusion map $i: \Lambda^n_{sub} \ra \R^n$ would have a smooth retraction $f: \R^n \ra \Lambda^n_{sub}$.
Suppose this is the case.
Then the composition $i \circ f: \R^n \ra \R^n$ is a smooth map preserving the axes,
and so $(i \circ f)_* = \id: T_0 \R^n \ra T_0 \R^n$.
This implies that $i \circ f$ is a local diffeomorphism at $0$ by the inverse function theorem,
which is a contradiction.

For the same reasons,
neither $\partial' \R^n$ nor $\partial' \R^n_{sub}$ is fibrant for any $n \geq 2$.

Similarly, many colimits of diffeological spaces are not fibrant.
For example, $\hat{S}^1$ defined in Example~\ref{hatS1} is not fibrant,
nor is the wedge of two or more smooth manifolds of positive dimension.
\end{ex}

\begin{ex}\label{nonfib3}
For any pointed diffeological space $(X,x)$,
we can construct the path space $P(X,x) = C^\oo((\R, 0), (X,x))$.
This diffeological space is always smoothly contractible, since we have a smooth
contracting homotopy
$\alpha:P(X,x) \times \R \ra P(X,x)$ defined by $\alpha(f,t)(s)=f(ts)$.
We also have a natural smooth map $\ev_1:P(X,x) \ra X$ defined by $f \mapsto f(1)$.
However, $\ev_1$ is not always a fibration in $\Diff$.
For example, take $X=\Lambda^n$ for $n \geq 2$, and let $x=0 \in X$.
It suffices to show that the fiber of $\ev_1$ at $x$, i.e., the loop space
$\Omega(X,x)$, is not fibrant.
We can construct a smooth map $H:X \ra \Omega(X,x)$ by
$H(y)(t)= \psi(t) y$,
where $\psi:\R \ra \R$ is a smooth function such that
$\psi(t)=0$ when $t \leq 0$ or $t \geq 1$ and $\psi(1/2)=1$.
Since $\ev_{1/2} \circ \, H = \id_{\Lambda^n}$,
$\Lambda^n$ is a smooth retract of $\Omega(X,x)$.
We saw in Example~\ref{nonfib1} that $\Lambda^n$ is not fibrant,
and so it follows that $\Omega(X,x)$ is not fibrant.

It is unfortunate that $\ev_1$ is not a fibration, as otherwise
a standard mapping path space construction could be used to factor
many maps into a weak equivalence followed by a fibration.
\end{ex}

\begin{ex}\label{nonfib2}
Write $\hat{\R}^n$ for $\R^n$ with the diffeology generated by a set
$S \subseteq C^\infty(\R^{n-1},\R^n)$ which contains all the natural inclusions
$\R^{n-1} \ra \R^n$ into the coordinate hyperplanes.
Then $\hat{\R}^n$ is not fibrant for $n \geq 1$.
If it were, then there would exist a smooth map $F : \R^n \to \hat{\R}^n$
making the diagram
\[
\xymatrix{\Lambda^n \ar[r]^i \ar[d]_i & \hat{\R}^n \ar[r]^j & \R^n \\ \R^n \ar@{.>}[ur]_F \\}
\]
commute, where $i$ is the usual inclusion and $j$ is the identity map.
Since $j \circ F$ is the identity map on the coordinate hyperplanes,
it must induce the identity map on the tangent spaces at $0$.
But, in a neighbourhood of $0$, the map $F$ must factor through $\R^{n-1}$,
which means that $j \circ F$ has rank at most $n-1$.

The same method shows that for any $n,m \in \N$ with $n>m$,
$\R^n$, with the diffeology generated by any set
$S \subseteq C^\infty(\R^m,\R^n)$ which contains all the natural inclusions $\R^m \ra \R^n$
into the coordinate $m$-planes, is not fibrant.
\end{ex}

\begin{ex}\label{halfline}
$X=[0,\infty)$ as a diffeological subspace of $\R$ is not fibrant.
It is not hard to show that $X \ra \R^0$ has the right lifting property with respect to $\Lambda^2 \ra \R^2$,
but we will see that it does not have the right lifting property with respect to $\Lambda^3 \to \R^3$.
Recall that $\Lambda^3$ is a colimit of three copies of $\R^2$ glued along three lines.
Let $f:\Lambda^3 \ra X$ be defined by $f_i:\R^2 \ra X$ with
$f_i(x_j,x_k)=(x_j-x_k)^2$ for $\{i,j,k\}=\{1,2,3\}$.
Assume that $f$ has a smooth extension $G:\R^3 \ra X$,
that is, that there exists a non-negative smooth function $F:\R^3 \ra \R$ such that
$F(x_1,x_2,0)=(x_1-x_2)^2$, $F(0,x_2,x_3)=(x_2-x_3)^2$ and $F(x_1,0,x_3)=(x_1-x_3)^2$.
Consider the composition $h:\xymatrix{\R \ar[r]^g & \R^3 \ar[r]^F & \R}$, with $g(t)=(t,t,t)$.
We compute some partial derivatives of $F$.
First, $F_1(x_1, x_2, 0) = \frac{\partial}{\partial x_1} (x_1 - x_2)^2 = 2(x_1 - x_2)$,
and so $F_{11}(x_1, x_2, 0) = 2$ and $F_{12}(x_1, x_2, 0) = -2$.
Thus $F_1(0,0,0) = 0$, $F_{11}(0, 0, 0) = 2$ and $F_{12}(0, 0, 0) = -2$.
Similarly, we find that $F_i(0,0,0) = 0$ and that $F_{ij}(0,0,0)$ is $2$ if $i = j$ and is $-2$ if $i \neq j$.
Clearly $h(0) = 0$.
By the chain rule, $h'(t) = \sum_i F_i(t,t,t)$, and so $h'(0) = 0$.
Also, $h''(t) = \sum_{i, j} F_{ij}(t,t,t)$, and so $h''(0) = -6$.
It follows that $h(t)=-3t^2+o(t^2)$,
which contradicts the fact that $F$ is non-negative.
\end{ex}

\begin{cor}\label{mfdwb-nonfib}
Any diffeological space containing $\R^{\geq 0} \times \R^n$ with the sub-diffeology of $\R^{n+1}$
as a $D$-open subset, for some $n \in \N$, is not fibrant.
In particular, any smooth manifold with boundary or with corners is not fibrant.
\end{cor}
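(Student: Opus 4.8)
The plan is to reduce everything to Example~\ref{halfline}, which shows that $\R^{\geq 0}=[0,\infty)$ with the sub-diffeology from $\R$ is not fibrant, together with Theorem~\ref{Dopnfibrant}, which says that a $D$-open subspace of a fibrant diffeological space is again fibrant.

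First I would show that $\R^{\geq 0}\times\R^n$, with the sub-diffeology from $\R^{n+1}$, is not fibrant. The projection $\R^{\geq 0}\times\R^n\to\R^{\geq 0}$ and the inclusion $x\mapsto(x,0)$ are both smooth, being restrictions of smooth maps between Euclidean spaces, and their composite is the identity of $\R^{\geq 0}$; hence $\R^{\geq 0}$ is a smooth retract of $\R^{\geq 0}\times\R^n$. Since the class of fibrations, and therefore the class of fibrant objects, is closed under smooth retracts (equivalently, since $S^D$ preserves products and retracts and retracts of Kan complexes are Kan), a fibrant $\R^{\geq 0}\times\R^n$ would make $\R^{\geq 0}$ fibrant, contradicting Example~\ref{halfline}. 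Now if $Z$ is any diffeological space containing $\R^{\geq 0}\times\R^n$ with the sub-diffeology from $\R^{n+1}$ as a $D$-open subset, then $Z$ being fibrant would force $\R^{\geq 0}\times\R^n$ to be fibrant by Theorem~\ref{Dopnfibrant}; hence $Z$ is not fibrant.

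For the final claim, it remains to check that every smooth manifold $M$ with non-empty boundary or corners, say of dimension $n+1$, contains a copy of $\R^{\geq 0}\times\R^n$ with the sub-diffeology from $\R^{n+1}$ as a $D$-open subset. I would choose a boundary point lying on a single codimension-$1$ face — such a point exists whenever the boundary is non-empty, since any boundary point can be perturbed within its corner chart to reduce its index to $1$ — and take a chart around it. After shrinking, this yields a $D$-open subset of $M$ diffeomorphic to the open half-ball $B=\{v\in\R^{\geq 0}\times\R^n : \|v\|<1\}$, and the map $v\mapsto v/\sqrt{1+\|v\|^2}$ is a diffeomorphism from $\R^{\geq 0}\times\R^n$ onto $B$ (with inverse $w\mapsto w/\sqrt{1-\|w\|^2}$). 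Thus $M$ contains $\R^{\geq 0}\times\R^n$ as a $D$-open subset, and the first part applies. The argument is essentially routine once Example~\ref{halfline} and Theorem~\ref{Dopnfibrant} are available; the only points requiring a little care are the smooth-retract step used to pass from $\R^{\geq 0}$ to $\R^{\geq 0}\times\R^n$ and, in the corners case, locating a codimension-$1$ boundary point and checking that the half-ball is genuinely diffeomorphic to $\R^{\geq 0}\times\R^n$.
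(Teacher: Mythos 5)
Your proposal is correct and follows essentially the same route as the paper: the paper's proof cites exactly Theorem~\ref{Dopnfibrant}, Example~\ref{halfline}, and the fact that the half line is a smooth retract of $\R^{\geq 0}\times\R^n$ (via the same projection/inclusion you use), together with closure of fibrant objects under smooth retracts. Your additional verification that a manifold with boundary or corners contains such a $D$-open half-space (chart at a codimension-one boundary point plus the diffeomorphism $v\mapsto v/\sqrt{1+\|v\|^2}$ onto the half-ball) correctly fills in a detail the paper leaves implicit.
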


\begin{proof}
This follows from Theorem~\ref{Dopnfibrant}, Example~\ref{halfline},
and the fact that the half line with the sub-diffeology of $\R$ is a smooth retract of $\R^{\geq 0} \times \R^n$.
\end{proof}

\begin{ex}\label{afewhalflines}\
\begin{enumerate}
\item For $n \geq 1$,
let $X_n$ be $[0,\infty)$ equipped with the diffeology
generated by the map $\R^n \ra [0,\infty)$ given by $x \mapsto \|x\|^2$.
Then the method used in Example~\ref{halfline} shows that $X_n$ is
not fibrant.
Note that $X_n$ is diffeomorphic to the quotient diffeological space $\R^n/O(n)$.

\item Note that $X_n \ra X_{n+1}$ given by $x \mapsto x$ is smooth.
Write $X_\infty$ for the colimit.
Then the diffeology on $X_\infty$ and the sub-diffeology from $\R$ are different;
see~\cite{IW} or~\cite[Remark~1.8.2]{W1}.
Moreover, the method used in Example~\ref{halfline} shows that $X_\infty$ is not fibrant.

\item Let $G$ be a finite cyclic group acting on $\R^2$ by rotation.
By a similar method, one can show that the orbit space $\R^2/G$ with the quotient diffeology is not fibrant.

\item Let $\Z_2$ act on $\R^n$ by $(x_1,\ldots,x_n) \mapsto (\pm x_1,\ldots,\pm x_n)$.
Then the orbit space $\R^n / \Z_2$ with the quotient diffeology is not fibrant,
since $X_1$ is a smooth retract of $\R^n / \Z_2$.
\end{enumerate}
\end{ex}

\end{document}